\providecommand\@dotsep{5}
\def\listtodoname{List of Todos}
\def\listoftodos{\@starttoc{tdo}\listtodoname}
\numberwithin{equation}{section}
\newcommand{\R}{\mathbb{R}}
\newcommand{\N}{\mathcal{N}}
\newcommand{\C}{\mathbb{C}}
\newcommand{\h}{H^{s}_{\varepsilon}}
\newcommand{\2}{2^{*}_{s}}
\DeclareMathOperator{\dive}{div}
\DeclareMathOperator{\supp}{supp}
\DeclareMathOperator{\e}{\varepsilon}
\newtheorem{lem}{Lemma}[section]
\newtheorem{thm}{Theorem}[section]
\newtheorem{defn}{Definition}[section]
\newtheorem{remark}{Remark}[section]
\title[fractional Schr\"odinger equations with magnetic field]{Existence and concentration results for some fractional Schr\"odinger equations  in $\R^{N}$ with magnetic fields}
\author[V. Ambrosio]{Vincenzo Ambrosio}
\address{Vincenzo Ambrosio\hfill\break\indent 
Dipartimento di Ingegneria Industriale e Scienze Matematiche \hfill\break\indent
Universit\`a Politecnica delle Marche\hfill\break\indent
Via Brecce Bianche, 12\hfill\break\indent
60131 Ancona (Italy)}
\email{ambrosio@dipmat.univpm.it}
\keywords{Magnetic fractional Laplacian; variational methods; fractional magnetic Kato's inequality}
\subjclass[2010]{47G20, 35R11, 35A15, 58E05}
\begin{document}

\maketitle

\begin{abstract}
We consider some nonlinear fractional Schr\"odinger equations with magnetic field and involving continuous nonlinearities having subcritical, critical or supercritical growth. Under a local condition on the potential, we use minimax methods to investigate the existence and concentration of nontrivial weak solutions. 
\end{abstract}

\maketitle

\section{introduction}

\noindent 
In the first part of this paper we study the following nonlinear fractional Schr\"odinger equation
\begin{equation}\label{P}
\e^{2s}(-\Delta)_{A/\e}^{s}u+V(x)u=f(|u|^{2})u \quad \mbox{ in } \R^{N},
\end{equation}
where $\e>0$ is a parameter, $s\in (0,1)$, $N\geq 3$ and $A:\R^{N}\rightarrow \R^{N}$ is a  $C^{0,\alpha}$, with $\alpha\in(0,1]$, magnetic potential.
The magnetic fractional  Laplacian $(-\Delta)^{s}_{A}$ is defined, up to a normalization constant, for all $u\in C^{\infty}_{c}(\R^{N}, \C)$ by setting
\begin{equation}\label{operator}
(-\Delta)^{s}_{A}u(x)
:= 2 \lim_{r\rightarrow 0} \int_{\R^{N}\setminus B_{r}(x)} \frac{u(x)-e^{\imath A(\frac{x+y}{2})\cdot (x-y)} u(y)}{|x-y|^{N+2s}} dy.
\end{equation}
This operator has been introduced in \cite{DS, I10} and relies essentially on the L\'evy-Khintchine formula
for the generator of a general L\'evy process.
From a physical point of view, when $s=\frac{1}{2}$, 
the operator in \eqref{operator} takes inspiration from the definition of a quantized operator corresponding to the classical relativistic Hamiltonian symbol for a relativistic particle of mass $m\geq 0$, that is
$$
\sqrt{(\xi-A(x))^{2}+m^{2}}+V(x), \quad (\xi, x)\in \R^{N}\times \R^{N},
$$
which is the sum of the kinetic energy term involving magnetic vector potential $A(x)$ and the potential energy term of electric scalar potential $V(x)$. 
For the sake of completeness, we emphasized that in the literature there are three kinds of quantum relativistic Hamiltonians depending on how to quantize the kinetic energy term $\sqrt{(\xi-A(x))^{2}+m^{2}}$.
As explained in \cite{I10}, these three non-local operators are in general different from each other but coincide when the vector potential $A$ is assumed to be linear, so in particular, in the case of constant magnetic fields.
For a more detailed description of the consistence of the definition of $(-\Delta)^{s}_{A}$ and for some recent results established for problems involving this operator, we refer the interested reader to \cite{AD, DS, FPV, PSV2, PSV1, ZSZ} and the references therein.\\
When $s\rightarrow 1$, equation \eqref{P} is related to the study of solutions $u:\R^{N}\rightarrow \C$ of the following nonlinear Schr\"odinger equation with magnetic field
\begin{equation}\label{MSE}
\left(\frac{\e}{\imath}\nabla-A(x)\right)^{2} u+V(x)u=f(|u|^{2})u \quad \mbox{ in } \R^{N},
\end{equation}
where 
$\left(\frac{\e}{\imath}\nabla-A\right)^{2}$ is the magnetic Laplacian given by
$$
\left(\frac{\e}{\imath}\nabla-A\right)^{2}\!u= -\e^{2}\Delta u -\frac{2\e}{\imath} A(x) \cdot \nabla u + |A(x)|^{2} u -\frac{\e}{\imath} u \dive(A(x)).
$$ 
In this context, when $N=3$, the magnetic field $B$ is exactly the curl of $A$, while  for higher dimensions $N\geq 4$, $B$ should be thought of as a 2-form given by $B_{ij} =\partial_{j}A_{k}-\partial_{k}A_{j}$; see \cite{AHS, RS}. \\
Equation \eqref{MSE} arises in the investigation of standing wave solutions $\psi(x, t)=u(x)e^{-\imath \frac{E}{\e}t}$, with $E\in \R$, for the following time-dependent nonlinear Schr\"odinger equation 
$$
\imath \e \frac{\partial \psi}{\partial t}=\left(\frac{\e}{\imath}\nabla-A(x)\right)^{2} \psi+W(x)\psi-f(|\psi|^{2})\psi \quad \mbox{ in } (x, t)\in \R^{N}\times \R
$$
where $W(x)=V(x)+E$. An important class of solutions of \eqref{MSE} are the so called semi-classical states which concentrate and develop a spike shape around one, or more, particular points in $\R^{N}$, while vanishing elsewhere as $\e\rightarrow 0$. This interest is due to the well-known fact that the transition from Quantum Mechanics to Classical Mechanics can be formally performed by sending $\e\rightarrow 0$.
For this reason, equation \eqref{MSE} has been widely studied by many authors \cite{EL, AF, AFF, AS, Cingolani, Cingolani-Secchi, K}.\\
In the nonlocal framework, if the vector field $A\equiv 0$, problem \eqref{P} reduces to a fractional Schr\"odinger equation of the type
\begin{equation}\label{FSE}
\e^{2s}(-\Delta)^{s}u+V(x)u=f(u) \quad \mbox{ in } \R^{N},
\end{equation}
introduced by Laskin \cite{Laskin1} as a fundamental equation of fractional Quantum Mechanics in the study of particles on stochastic fields modeled by L\'evy processes.
In the recent literature, several existence and multiplicity results for \eqref{FSE} have been established by applying different variational and topological approaches: see for instance \cite{AM, A1, A6, DDPW, DPMV, FQT, FigS, HZ, MBRS, Secchi}.\\
The potential $V: \R^{N}\rightarrow \R$ appearing in \eqref{P} is a continuous function verifying the following conditions due to del Pino and Felmer \cite{DF}:
\begin{compactenum}[$(V_1)$]
\item $\inf_{x\in \R^{N}} V(x)=V_{1}>0$;
\item there exists a bounded open set $\Lambda\subset \R^{N}$ such that
$$
0<V_{0}=\inf_{x\in \Lambda} V(x)<\min_{x\in \partial \Lambda} V(x).
$$
\end{compactenum}
We note that no restriction on the global behavior of $V$ is required: in particular, $V$ is not required to be bounded or to belong to a Kato class.

\noindent
Concerning the nonlinearity $f: \R\rightarrow \R$, we assume that $f$ is continuous, $f(t)=0$ for $t\leq 0$ and satisfies the following assumptions:
\begin{compactenum}[$(f_1)$]
\item $\displaystyle{\lim_{t\rightarrow 0} f(t)=0}$;
\item there exists $q\in (2, 2^{*}_{s})$, where $2^{*}_{s}=2N/(N-2s)$, such that $\lim_{t\rightarrow \infty} f(t)/t^{\frac{q-2}{2}}=0$;
\item there exists $\theta>2$ such that $0<\frac{\theta}{2} F(t)\leq t f(t)$ for any $t>0$, where $F(t)=\int_{0}^{t} f(\tau)d\tau$;
\item  $f(t)$ is increasing for $t>0$.
\end{compactenum} 
In a recent paper \cite{AD}, the author and d'Avenia established the existence and multiplicity of nontrivial solutions to \eqref{P}, for $\e>0$ small, requiring  that $V$ verifies the global condition introduced by Rabinowitz \cite{Rab}:
$$
\liminf_{|x|\rightarrow \infty} V(x)>\inf_{x\in \R^{N}} V(x).
$$
Their results have been strongly influenced by the work \cite{AFF} in which the authors dealt with \eqref{MSE} under local assumptions on the potential $V$.\\
Motivated by \cite{AFF, AD, DF}, in this paper we focus our attention on the existence and concentration of weak nontrivial solutions to \eqref{P} by supposing that $V$ satisfies $(V_{1})$-$(V_{2})$. For simplicity, we will assume that $0\in \Lambda$ and $V_{1}=V_{0}=V(0)$.

The first main result of this paper is the following:
\begin{thm}\label{thm1}
Suppose that $V$ satisfies $(V_1)$-$(V_2)$ and $f$ satisfies $(f_1)$-$(f_4)$. Then there exists $\e_{0}>0$ such that, for any $\e\in (0, \e_{0})$, problem \eqref{P} has a nontrivial solution $u_{\e}$. Moreover, if $\eta_{\e}\in \R^{N}$ is the global maximum point of  $|u_{\e}|$, we have that
$$
\lim_{\e\rightarrow 0} V(\eta_{\e})=V_{0},
$$
and there exists $C>0$ such that
$$
|u_{\e}(x)|\leq \frac{C\e^{N+2s}}{\e^{N+2s}+|x-\eta_{\e}|^{N+2s}} \quad \forall x\in \R^{N}.
$$
\end{thm}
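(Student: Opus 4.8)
The plan is to implement the del Pino--Felmer penalization scheme in the fractional magnetic setting. \textbf{Step 1 (Rescaling and penalization).} After the change of variable $x\mapsto \e x$, problem \eqref{P} becomes
$$(-\Delta)^{s}_{A_{\e}}u+V(\e x)u=f(|u|^{2})u\quad\text{in }\R^{N},\qquad A_{\e}(x):=A(\e x).$$
Following \cite{DF}, fix $k>\theta/(\theta-2)$, choose $a>0$ with $f(a)=V_{1}/k$, set $\tilde f(t)=f(t)$ for $t\le a$ and $\tilde f(t)=V_{1}/k$ for $t>a$, and let $g(x,t)=\chi_{\Lambda}(x)f(t)+(1-\chi_{\Lambda}(x))\tilde f(t)$, $G(x,t)=\int_{0}^{t}g(x,\tau)\,d\tau$. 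Consider the modified equation $(-\Delta)^{s}_{A_{\e}}u+V(\e x)u=g(\e x,|u|^{2})u$, with Euler functional
$$J_{\e}(u)=\frac12[u]_{A_{\e}}^{2}+\frac12\int_{\R^{N}}V(\e x)|u|^{2}\,dx-\frac12\int_{\R^{N}}G(\e x,|u|^{2})\,dx$$
on the magnetic Sobolev space $\h$; any solution $u$ of the modified problem with $|u(x)|^{2}\le a$ for $x\notin\Lambda/\e$ solves the original one.

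\textbf{Step 2 (Existence for the penalized problem).} Using $(f_1)$--$(f_2)$ near the origin, the subquadratic control $g(x,t)t\le \frac{V_{1}}{k}t$ outside $\Lambda$, the superquadraticity coming from $(f_3)$ inside $\Lambda$, and the embeddings of $\h$, one verifies that $J_{\e}$ has the mountain pass geometry with level $c_{\e}>0$. The penalization makes every Palais--Smale sequence bounded, and --- routing the lack of compactness through the fractional magnetic diamagnetic inequality to the scalar fractional problem --- one obtains the $(PS)_{c_{\e}}$ condition, hence a nontrivial critical point $u_{\e}$ at level $c_{\e}$. \textbf{Step 3 (Energy comparison).} Let $c_{V_{0}}$ be the mountain pass level of the autonomous limit problem $(-\Delta)^{s}v+V_{0}v=f(|v|^{2})v$ in $\R^{N}$, which has a positive radial fast-decaying ground state $w$. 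Testing $J_{\e}$ with suitably truncated functions of the form $x\mapsto e^{\imath A(0)\cdot x}w(x)$, and using $0\in\Lambda$, $V(0)=V_{0}$ together with the H\"older continuity of $A$, gives $\limsup_{\e\to0}c_{\e}\le c_{V_{0}}$. \textbf{Step 4 (Concentration).} From the uniform bound on $[u_{\e}]_{A_{\e}}^{2}+\int V(\e x)|u_{\e}|^{2}$ and a vanishing/non-vanishing dichotomy (Lions' lemma) applied to $|u_{\e}|$, one finds points $\tilde y_{\e}$ with $\liminf\int_{B_{R}(\tilde y_{\e})}|u_{\e}|^{2}>0$; up to a phase, $u_{\e}(\cdot+\tilde y_{\e})\to v$, a nontrivial solution of an autonomous problem with potential $\bar V:=\lim V(\e\tilde y_{\e})\ge V_{0}$. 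A Fatou-type estimate on the energies yields $c_{\bar V}\le\liminf c_{\e}\le c_{V_{0}}$; since $V\mapsto c_{V}$ is strictly increasing, $\bar V=V_{0}$, and by $(V_{2})$ (as $V_{0}<\min_{\partial\Lambda}V$) the point $\e\tilde y_{\e}$ stays in $\Lambda$ for $\e$ small.

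\textbf{Step 5 (Uniform decay, removal of penalization, and the pointwise bound).} A Moser iteration for the magnetic fractional operator, based on Kato's inequality $(-\Delta)^{s}|u_{\e}|\le\mathrm{Re}\!\bigl(\tfrac{\overline{u_{\e}}}{|u_{\e}|}(-\Delta)^{s}_{A_{\e}}u_{\e}\bigr)$ in the weak sense, gives a uniform $L^{\infty}$ bound on $u_{\e}$; combined with the concentration in Step 4 and the decay of the limit $v$, it shows $|u_{\e}(x)|\to0$ as $|x-\tilde y_{\e}|\to\infty$ uniformly in $\e$. Hence $|u_{\e}|^{2}\le a$ outside $\Lambda/\e$ for $\e$ small, so $u_{\e}$ solves the original rescaled equation; undoing the scaling produces the solution of \eqref{P}, and the global maximum point $\eta_{\e}$ of $|u_{\e}|$ satisfies $\eta_{\e}=\e\tilde y_{\e}+o(\e)$, so $V(\eta_{\e})\to\bar V=V_{0}$. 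For the explicit decay, construct a nonlocal barrier $W(x)=C(1+|x|^{N+2s})^{-1}$ and $R,c>0$ with $(-\Delta)^{s}W+cW\ge0$ for $|x|\ge R$; by Kato's inequality $|u_{\e}|$ is a weak subsolution of $(-\Delta)^{s}|u_{\e}|+\tfrac{V_{1}}{2}|u_{\e}|\le0$ outside a large ball around $\tilde y_{\e}$ (Step 5 absorbs the nonlinear term), and the comparison principle for $(-\Delta)^{s}$ gives $|u_{\e}(x)|\le CW(x-\tilde y_{\e})$. Rescaling $x\mapsto x/\e$ turns $\tilde y_{\e}$ into $\eta_{\e}$ up to a bounded error and produces the $\e^{N+2s}$ factors, yielding
$$|u_{\e}(x)|\le\frac{C\e^{N+2s}}{\e^{N+2s}+|x-\eta_{\e}|^{N+2s}}\quad\forall x\in\R^{N}.$$

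\textbf{Main obstacle.} The delicate points are the compactness of the penalized functional, which forces the analysis through the fractional magnetic diamagnetic/Kato inequality rather than the complex-valued equation directly, and Step 5: obtaining a \emph{uniform} $L^{\infty}$ bound and polynomial decay for solutions of a nonlocal magnetic problem, where no pointwise maximum principle is available and every comparison argument must be carried out via Kato's inequality and a nonlocal barrier. Locating the concentration point inside $\Lambda$ via the strict monotonicity $\e\mapsto c_{V}$ is the other step requiring care.
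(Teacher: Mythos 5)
Your overall route is the same as the paper's: del Pino--Felmer penalization after rescaling, mountain pass plus a Palais--Smale argument for the penalized functional, the upper bound $\limsup_{\e\to0}c_{\e}\le c_{0}$ via test functions $\eta_\e(x) w(x)e^{\imath A(0)\cdot x}$, a Lions-type concentration/translation step identifying the limit potential value as $V_{0}$, and finally an $L^{\infty}$ bound, uniform decay, and a nonlocal barrier to remove the penalization and get the polynomial estimate. However, there is a genuine gap at the heart of Step 5. You base the passage from the complex magnetic equation to the scalar inequality $(-\Delta)^{s}|u_{\e}|+V_{\e}|u_{\e}|\le g_{\e}(x,|u_{\e}|^{2})|u_{\e}|$ on ``Kato's inequality $(-\Delta)^{s}|u|\le\Re\bigl(\tfrac{\bar u}{|u|}(-\Delta)^{s}_{A}u\bigr)$ in the weak sense,'' invoked as if it were available. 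For the fractional magnetic operator with general $s\in(0,1)$ this inequality is not known: it holds pointwise for $C^{\infty}_{c}$ functions and, in weak form, only when $|u|$ is bounded away from $0$ and $\infty$ (so that $\tfrac{u}{|u|}\varphi$ is an admissible test function); for a critical point $u_{\e}$, whose modulus vanishes at infinity, the test function $\tfrac{u_{\e}}{|u_{\e}|}\varphi$ is not in $H^{s}_{\e}$ and the claimed weak inequality has no proof you can cite. The paper's actual proof circumvents exactly this obstruction by testing the penalized equation with $\tfrac{u_{\e}}{\sqrt{|u_{\e}|^{2}+\delta^{2}}}\varphi$ and letting $\delta\to0$, after first securing the uniform $L^{\infty}$ bound by a Moser iteration that uses only the elementary real-part inequality with test functions $u_{L}^{2(\beta-1)}u_{\e}$ (no Kato inequality needed there). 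Without this regularization, or an independent proof of the weak Kato inequality, your Step 5 does not go through.

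Two smaller points in the same step are also glossed over. First, the uniform-in-$\e$ vanishing of $|u_{\e}|(\cdot+\tilde y_{\e})$ at infinity does not follow merely from the $H^{s}$ convergence of the translated moduli plus the $L^{\infty}$ bound; the paper obtains it by comparing with $z_{n}=\mathcal{K}\ast g_{n}$ ($\mathcal{K}$ the Bessel kernel), which requires the scalar subsolution inequality discussed above. Second, the comparison with the barrier $W(x)=C(1+|x|^{N+2s})^{-1}$ on an exterior domain must handle the nonlocality: one cannot simply invoke a comparison principle ``for $|x|\ge R$'' without controlling the functions inside $B_{R}$; the paper does this by building $\tilde w_{n}=(b+1)w-cw_{n}$, showing it is positive on $B_{R_{3}}$, and running a contradiction argument at an interior infimum point via the integral representation of $(-\Delta)^{s}$. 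Your sketch is in the right spirit, but these are precisely the places where the argument must be carried out in detail for the nonlocal magnetic setting.
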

The proof of Theorem \ref{thm1} is obtained by using suitable variational methods.  More precisely, inspired by \cite{AFF, DF}, we modify the nonlinearity $f$ outside the set $\Lambda$ in such way that the energy functional of the modified problem satisfies the Palais-Smale condition (see Lemma \ref{PSc}). 
In order to prove that the solutions of the modified problem also satisfy \eqref{P} for $\e>0$ small enough, we use in an appropriate way a Moser iteration scheme \cite{Moser} and some recent results established in \cite{FQT}. A similar approach, combined with the extension method \cite{CS}, has been brilliantly used in \cite{AM} to study the existence and concentration of positive solutions for the fractional Schr\"odinger equation \eqref{FSE}. Anyway, when $A\neq 0$, we can not directly adapt these techniques due to the presence of the magnetic fractional Laplacian $(-\Delta)^{s}_{A}$. Moreover, in the fractional magnetic case, the estimates on the modulus of solutions are more delicate. Therefore, a more careful analysis is essential to prove that the (translated) sequence $(u_{n})$ of solutions of the modified problem verifies the property $|u_{n}(x)|\rightarrow 0$ as $|x|\rightarrow \infty$ uniformly with respect to $n\in \mathbb{N}$. We give a sketch of our idea. Firstly we prove that  each  $|u_{n}|$ is bounded in $L^{\infty}(\R^{N}, \R)$ uniformly in $n\in \mathbb{N}$, by means of a Moser iteration argument. At this point, we would like to use a fractional magnetic Kato's inequality \cite{Kato} to deduce that each $|u_{n}|$ verifies
\begin{equation}\label{AAV}
(-\Delta)^{s}|u_{n}|+V_{0}|u_{n}|\leq g(\e x, |u_{n}|^{2})|u_{n}| \mbox{ in } \R^{N}.
\end{equation}
If this were true, then we can exploit a comparison argument (see at the end of Lemma \ref{moser} below) and the results in \cite{FQT} to deduce informations on the decay  at infinity of each $|u_{n}|$. We believe that a Kato's inequality is available for $(-\Delta)^{s}_{A}$ but we are not able to prove it except for rough functions which are bounded from below and above (see Remark $3.1$). Anyway, in order to show that each $|u_{n}|$ solves \eqref{AAV}, we use $\displaystyle{\frac{u_{n}}{u_{\delta,n}}\varphi}$ as test function in the modified problem
, where $u_{\delta,n}=\sqrt{|u_{n}|^{2}+\delta^{2}}$ and $\varphi$ is a real smooth nonnegative function with compact support in $\R^{N}$, and then we take the limit as $\delta\rightarrow 0$. 
We point out that our approach is completely different from \cite{AFF} in which the authors only use a suitable Moser iteration argument to prove that the solutions of the modified problem are also solutions of the original one.
However, the iteration in \cite{AFF} does not seem to be easy to adapt in our framework. 
Finally, we also establish a power-type decay estimate for $|u_{n}|$ which is in clear accordance with the results obtained in \cite{FQT}.

The second part of this paper deals with the following critical problem
\begin{equation}\label{Pcritico}
\e^{2s}(-\Delta)_{A/\e}^{s}u+V(x)u=f(|u|^{2})u+|u|^{2^{*}_{s}-2}u \quad \mbox{ in } \R^{N},
\end{equation}
where $f$ satisfies the following assumptions:
\begin{compactenum}[$(h_1)$]
\item $\displaystyle{\lim_{t\rightarrow 0} f(t)=0}$;
\item there exist $C_{0}>0$ and $q, \sigma\in (2, 2^{*}_{s})$ such that 
\begin{align*}
f(t)\geq C_{0} t^{\frac{q-2}{2}} \mbox{ for all } t\geq 0 \, \mbox{ and } \lim_{t\rightarrow \infty} f(t)/t^{\frac{\sigma-2}{2}}=0;
\end{align*}
\item there exists $\theta\in (2, \sigma)$ such that $0<\frac{\theta}{2} F(t)\leq t f(t)$ for any $t>0$, where $F(t)=\int_{0}^{t} f(\tau)d\tau$;
\item  $f(t)$ is increasing for $t>0$.
\end{compactenum} 

This time we have an extra difficulty in the study of our problem which is due to the presence of the critical exponent. Anyway, we will show that the approach developed to study the subcritical case can be adapted, after suitable modifications, to the critical one. Clearly, the calculations performed to get compactness are much more involved than those of the previous case, and we make use of the Concentration-Compactness Lemma for the fractional Laplacian \cite{DPMV, PP}; see proof of Lemma \ref{PSccritico}.
Our second main result can be stated as follows:
\begin{thm}\label{thm2}
Suppose that $V$ satisfies $(V_1)$-$(V_2)$ and $f$ satisfies $(h_1)$-$(h_4)$. 
Then there exists $\e_{0}>0$ such that, for any $\e\in (0, \e_{0})$, problem \eqref{P} has a nontrivial solution. Moreover, if $\eta_{\e}\in \R^{N}$ is the global maximum point of  $|u_{\e}|$, we have that
$$
\lim_{\e\rightarrow 0} V(\eta_{\e})=V_{0},
$$
and there exists $C>0$ such that
$$
|u_{\e}(x)|\leq \frac{C\e^{N+2s}}{\e^{N+2s}+|x-\eta_{\e}|^{N+2s}} \quad \forall x\in \R^{N}.
$$
\end{thm}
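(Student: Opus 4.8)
The scheme is the same as for Theorem~\ref{thm1}; the only genuinely new point is a concentration-compactness analysis forced by the presence of the critical term $|u|^{\2-2}u$. After the change of variable $x\mapsto \e x$, problem \eqref{Pcritico} reads
\[
(-\Delta)^{s}_{A_{\e}}u+V(\e x)u=f(|u|^{2})u+|u|^{\2-2}u \quad\text{in }\R^{N},
\]
with $A_{\e}(x):=A(\e x)$. Following \cite{DF, AFF}, I would fix $K>0$ large and replace the nonlinearity by the penalized one
\[
g(x,t):=\chi_{\Lambda}(x)\Bigl(f(t)+t^{\frac{\2-2}{2}}\Bigr)+\bigl(1-\chi_{\Lambda}(x)\bigr)\,\tilde f(t),
\qquad \tilde f(t):=\min\{f(t),\,V_{1}/K\},
\]
(extended by $0$ for $t\le 0$), so that $g(x,t)\le f(t)+t^{\frac{\2-2}{2}}$ everywhere, $g$ agrees with the original right-hand side on $\Lambda$, and $t\mapsto g(x,t)t$ enjoys the del Pino--Felmer growth and monotonicity properties. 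The corresponding functional $J_{\e}$ on the magnetic fractional Sobolev space $\h$ (endowed with the $V(\e\cdot)$-weighted Gagliardo norm) has mountain--pass geometry, uniformly in $\e$, by $(h_1)$--$(h_3)$; let $c_{\e}$ be its minimax level.

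The key step is the local Palais--Smale condition of Lemma~\ref{PSccritico}: if $S_{\ast}$ denotes the best constant of the embedding $\dot H^{s}(\R^{N})\hookrightarrow L^{\2}(\R^{N})$, then $J_{\e}$ satisfies $(PS)_{c}$ for every $c<\frac{s}{N}S_{\ast}^{N/(2s)}$. For this one applies the Concentration-Compactness Lemma for the fractional Laplacian \cite{DPMV, PP} to a bounded $(PS)_{c}$ sequence and uses the diamagnetic inequality (which bounds the Gagliardo seminorm of $|u|$ by that of $u$) to rule out the loss of mass of the critical part as long as $c$ stays below the threshold; this yields strong convergence. To ensure the threshold is not reached, I would test $J_{\e}$ with suitably truncated Aubin--Talenti extremals for $S_{\ast}$ concentrated well inside $\Lambda/\e$: exploiting the lower bound $f(t)\ge C_{0}t^{\frac{q-2}{2}}$ with $q<\2$ from $(h_2)$, together with $N\ge 3$, one obtains $c_{\e}<\frac{s}{N}S_{\ast}^{N/(2s)}$ for $\e$ small. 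A mountain--pass argument then produces a nontrivial critical point $u_{\e}$ of $J_{\e}$, i.e.\ a weak solution of the penalized problem.

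It remains to prove that for $\e$ small the penalization is inactive, so that $u_{\e}$ solves \eqref{Pcritico}. Comparing $c_{\e}$ with the minimax level $c_{V_{0}}$ of the autonomous critical equation $(-\Delta)^{s}u+V_{0}u=f(|u|^{2})u+|u|^{\2-2}u$ on $\R^{N}$, one gets $\limsup_{\e\to0}c_{\e}\le c_{V_{0}}$, and $c_{V_{0}}<\frac{s}{N}S_{\ast}^{N/(2s)}$ by the Brezis--Nirenberg--type effect of the subcritical term; this both guarantees admissibility of the $(PS)$ level and, through a translation-and-limit argument, forces the concentration of $|u_{\e}|$: up to translations, $u_{\e}$ converges in $\h$ to a ground state of the limiting problem, and the global maximum point of $|u_{\e}|$ lies in $\Lambda/\e$ for $\e$ small, with $V$ evaluated there tending to $V_{0}$. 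Next, a Moser iteration as in Lemma~\ref{moser} gives a bound for $|u_{\e}|$ in $L^{\infty}(\R^{N},\R)$ uniform in $\e$. Then, using $\frac{u_{\e}}{u_{\delta,\e}}\varphi$ with $u_{\delta,\e}=\sqrt{|u_{\e}|^{2}+\delta^{2}}$ and $\varphi\in C^{\infty}_{c}(\R^{N},\R)$ nonnegative as a test function in the penalized equation and letting $\delta\to 0$, one shows that $|u_{\e}|$ satisfies the Kato-type inequality \eqref{AAV} (with the present, critical, choice of $g$); since the right-hand side is dominated by a power of $|u_{\e}|$, comparison with the fractional kernels of \cite{FQT} yields $|u_{\e}(x)|\to 0$ as $|x|\to\infty$ uniformly in $\e$, and the stated power-type decay. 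Combining uniform decay with concentration shows that $|u_{\e}|$ is small outside a neighbourhood of its maximum point, which sits inside $\Lambda/\e$; hence $g(\e x,|u_{\e}|^{2})=f(|u_{\e}|^{2})+|u_{\e}|^{\2-2}$ there and $u_{\e}$ is a genuine solution of \eqref{Pcritico}. Undoing the scaling gives the conclusion with the $\e^{N+2s}$ factor.

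The main obstacle is the critical Palais--Smale analysis: carrying out the concentration-compactness dichotomy for the magnetic fractional operator and, especially, verifying the strict inequality $c_{\e}<\frac{s}{N}S_{\ast}^{N/(2s)}$ by means of the delicate estimates on truncated extremal functions. A secondary difficulty, already present in the subcritical case, is the use of the regularized test function $\frac{u_{\e}}{u_{\delta,\e}}\varphi$ to circumvent the (still unavailable) fractional magnetic Kato inequality.
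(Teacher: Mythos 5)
Your overall scheme is the paper's (penalize, mountain pass, local Palais--Smale below $\frac{s}{N}S_{*}^{N/(2s)}$ via concentration-compactness, comparison with the autonomous level, Moser iteration plus the $\frac{u}{u_{\delta}}\varphi$ trick and comparison to get uniform decay), but your explicit penalization breaks the decisive final step. You set $\tilde f(t)=\min\{f(t),V_{1}/K\}$ outside $\Lambda$, i.e.\ you discard the critical term $t^{(2^{*}_{s}-2)/2}$ there for \emph{all} values of $t$. The point of the del Pino--Felmer construction is that the modified nonlinearity must coincide with the original one for small $t$ \emph{everywhere}, because the theorem is recovered by proving $|u_{\e}|^{2}\leq a$ outside $\Lambda_{\e}$ and then observing that on that region the truncated and the original equations are identical. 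With your choice, on $\R^{N}\setminus\Lambda_{\e}$ the truncated equation reads $(-\Delta)^{s}_{A_{\e}}u+V_{\e}u=\min\{f(|u|^{2}),V_{1}/K\}\,u$, which differs from the right-hand side of \eqref{Pcritico} by $|u|^{2^{*}_{s}-2}u$ no matter how small $|u|$ is; since $u_{\e}$ does not vanish identically outside $\Lambda_{\e}$, your solution of the truncated problem is never a solution of \eqref{Pcritico}, so the sentence ``hence $g(\e x,|u_{\e}|^{2})=f(|u_{\e}|^{2})+|u_{\e}|^{2^{*}_{s}-2}$ there and $u_{\e}$ is a genuine solution'' does not follow. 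The paper's definition avoids this: it fixes $a>0$ with $f(a)+a^{(2^{*}_{s}-2)/2}=V_{0}/k$ and sets $\tilde f(t)=f(t)+(t^{+})^{(2^{*}_{s}-2)/2}$ for $t\leq a$ and $\tilde f(t)=V_{0}/k$ for $t>a$, so that $g$ keeps the full (critical included) nonlinearity wherever $|u|^{2}\leq a$, and the smallness estimate outside $\Lambda_{\e}$ indeed deactivates the penalization. This is an easy repair, but as written the construction cannot yield Theorem \ref{thm2}.

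A secondary point concerns the energy threshold. You propose to prove $c_{\e}<\frac{s}{N}S_{*}^{N/(2s)}$ by testing $J_{\e}$ directly with truncated Aubin--Talenti extremals, ``exploiting $f(t)\geq C_{0}t^{(q-2)/2}$ and $N\geq 3$''. The paper instead takes a positive ground state $w$ of the autonomous critical problem with $I_{0}(w)=c_{0}<\frac{s}{N}S_{*}^{N/(2s)}$ (this strict inequality is quoted from \cite{HZ}, where the Brezis--Nirenberg-type computation is carried out) and transports it into $J_{\e}$ via $\eta_{\e}(x)w(x)e^{\imath A(0)\cdot x}$ exactly as in Lemma \ref{AMlem1}, obtaining $\limsup_{\e\to 0}c_{\e}\leq c_{0}$ (Lemma \ref{AMlem1critico}); the magnetic phase and the H\"older continuity of $A$ enter only in this transfer step. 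Your direct route is viable, but be aware that the strict inequality is not automatic for every $q\in(2,2^{*}_{s})$, $N\geq 3$ and arbitrary $C_{0}$: when $N$ is small relative to $s$ and $q$ is close to $2$, the $L^{2}$-contribution of the extremals can dominate the gain from the subcritical term, which is precisely why the paper defers this estimate to the autonomous results of \cite{HZ} rather than redoing it. The remaining steps of your sketch (local PS via the concentration-compactness lemma and the diamagnetic inequality, the compactness/concentration lemma for the maximum points, the Moser iteration and the Kato-type inequality obtained with the regularized test function) match the paper's Lemmas \ref{PSccritico}, \ref{prop3.3critico} and \ref{moser}.
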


In the last part of this paper we consider a supercritical version of problem \eqref{Pcritico}. More precisely, we are concerned with the following problem:
\begin{equation}\label{Psupercritico}
\e^{2s}(-\Delta)^{s}_{A_{\e}} u + V(x)u =  |u|^{q-2}u+\lambda |u|^{r-2}u \mbox{ in } \R^{N},
\end{equation}
where $\e>0$, $\lambda>0$, and $2<q<2^{*}_{s}< r$. In this case we are able to prove that:
\begin{thm}\label{thm3}
Suppose that $V$ verifies $(V_1)$-$(V_2)$. Then there exists $\lambda_{0}>0$ with the following property: for any $\lambda\in (0, \lambda_{0})$ there exists $\e_{\lambda}>0$ such that, for any $\e\in (0, \e_{\lambda})$, problem  \eqref{Psupercritico} has a nontrivial solution.
Moreover, if $\eta_{\e}\in \R^{N}$ is the global maximum of $|u_{\e}|$, then 
$$
\lim_{\e\rightarrow 0} V(\eta_{\e})=V_{0}.
$$
\end{thm}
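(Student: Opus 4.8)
The plan is to reduce the supercritical problem \eqref{Psupercritico} to the subcritical framework of Theorem \ref{thm1} via a truncation of the term $|u|^{r-2}u$, and then to show that, when $\lambda$ is small, the truncation is invisible to the solution that Theorem \ref{thm1} produces. For $K>0$ I would set, for $t\ge 0$,
\[
g_{K}(t):=\begin{cases} t^{\frac{r-2}{2}}, & 0\le t\le K^{2},\\ K^{r-q}\,t^{\frac{q-2}{2}}, & t> K^{2},\end{cases}
\qquad f_{K}(t):=t^{\frac{q-2}{2}}+\lambda\, g_{K}(t),
\]
and $f_{K}(t):=0$ for $t\le 0$. Then $f_{K}$ is continuous and increasing, it coincides with $t^{\frac{q-2}{2}}+\lambda t^{\frac{r-2}{2}}$ on $[0,K^{2}]$, and one has the two-sided control
\[
\tfrac{2}{q}\,t^{q/2}\le F_{K}(t)\le \tfrac{2}{q}\,(1+\lambda K^{r-q})\,t^{q/2},\qquad 0<f_{K}(t)\le (1+\lambda K^{r-q})\,t^{\frac{q-2}{2}}\quad (t>0),
\]
the lower bound because the $\lambda$-term is nonnegative. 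Using these I would verify that $f_{K}$ satisfies $(f_1)$-$(f_4)$: $(f_1)$ and $(f_4)$ are immediate; $(f_2)$ holds with $\bar q:=\tfrac12(q+2^{*}_{s})\in(q,2^{*}_{s})$ since $f_{K}(t)/t^{(\bar q-2)/2}\le (1+\lambda K^{r-q})t^{(q-\bar q)/2}\to0$; and $(f_3)$ holds with $\theta:=q>2$, which one checks by comparing $\tfrac{q}{2}F_{K}$ and $t f_{K}$ separately on the two branches (using $r>q$). What matters is that $\bar q$ and $\theta$ are chosen \emph{independently of} $K$ and $\lambda$.

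By Theorem \ref{thm1} applied to the problem obtained from \eqref{Psupercritico} by replacing the right-hand side with $f_{K}(|u|^{2})u$, for every $K>0$ and $\lambda>0$ there is $\e_{0}=\e_{0}(K,\lambda)>0$ such that for $\e\in(0,\e_{0})$ that problem has a nontrivial solution $u_{\e}=u_{\e,K,\lambda}$, and, writing $\eta_{\e}$ for the global maximum point of $|u_{\e}|$, one has $V(\eta_{\e})\to V_{0}$ as $\e\to0$, together with the power decay estimate of Theorem \ref{thm1}. The essential extra observation here — not needed in Theorem \ref{thm1} but crucial now — is that the mountain pass level $c_{\e}$ of the associated penalized functional is bounded \emph{above} by a constant independent of $K,\lambda$ and of $\e$ small: testing along a fixed path $\tau\mapsto\tau w$ as in the proof of Theorem \ref{thm1} and using $F_{K}(t)\ge\tfrac{2}{q}t^{q/2}$ gives $c_{\e}\le\max_{\tau\ge0}\bigl(\tfrac{\tau^{2}}{2}\|w\|_{\h}^{2}-\tfrac{2}{q}\tau^{q}\|w\|_{L^{q}}^{q}\bigr)=:\bar c$, and then the standard argument based on $(f_3)$ with $\theta=q$ yields a uniform bound $\|u_{\e}\|_{\h}\le\bar C$, with $\bar C$ depending only on $N,s,q,V,\Lambda$, in particular independent of $K,\lambda$ and of $\e$ small.

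The final step is a Moser iteration on $|u_{\e}|$, carried out exactly as in Theorem \ref{thm1} — via the diamagnetic inequality and the subcritical bound $f_{K}(|u_{\e}|^{2})|u_{\e}|\le(1+\lambda K^{r-q})|u_{\e}|^{q-1}$ with $q<2^{*}_{s}$ — but now keeping careful track of constants. Because $\|u_{\e}\|_{\h}\le\bar C$ uniformly, this would give
\[
\|u_{\e}\|_{L^{\infty}(\R^{N})}\le C_{0}\,(1+\lambda K^{r-q})^{\kappa}
\]
for some $C_{0},\kappa>0$ depending only on $N,s,q,\bar C$, hence independent of $K,\lambda$ and of $\e$ small. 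I would then fix $K:=2^{\kappa}C_{0}+1$ and set $\lambda_{0}:=K^{q-r}$; for $\lambda\in(0,\lambda_{0})$ one has $\lambda K^{r-q}<1$, whence $\|u_{\e}\|_{L^{\infty}}\le 2^{\kappa}C_{0}<K$, so $|u_{\e}(x)|^{2}\le K^{2}$ for every $x\in\R^{N}$ and therefore $f_{K}(|u_{\e}|^{2})=|u_{\e}|^{q-2}+\lambda|u_{\e}|^{r-2}$; that is, $u_{\e}$ solves \eqref{Psupercritico}. Setting $\e_{\lambda}:=\e_{0}(K,\lambda)$ and inheriting $V(\eta_{\e})\to V_{0}$ from Theorem \ref{thm1} would complete the proof. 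The main obstacle is precisely the uniform-in-$\e$ Moser estimate with the displayed explicit dependence on $K$ and $\lambda$; this rests on the uniform $\h$-bound, and that is why one must check that the penalized mountain pass levels are bounded above independently of the truncation parameters.
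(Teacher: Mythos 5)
Your proposal is correct and follows essentially the same route as the paper: truncate to a subcritical nonlinearity satisfying $(f_1)$-$(f_4)$ with $\theta=q$, invoke Theorem \ref{thm1}, derive an $\h$-bound uniform in $K$, $\lambda$ and small $\e$ from a $K,\lambda$-independent bound on the mountain pass levels (this is the paper's Lemma \ref{Fig1}), and then run a Moser iteration tracking the factor $1+\lambda K^{r-q}$ so that $K$ large and $\lambda_{0}$ small force $\||u_{\e}|\|_{L^{\infty}(\R^{N})}$ below the truncation threshold, making $u_{\e}$ a solution of \eqref{Psupercritico} with the concentration property inherited from Theorem \ref{thm1}. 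The only differences are cosmetic (you truncate only the $r$-term and at $|u|^{2}=K^{2}$ rather than at $K$, you bound the levels by dropping the $\lambda$-term instead of using the monotonicity $c_{0,\lambda}\leq c_{0,0}$, and there is a harmless constant slip in the displayed bound for $c_{\e}$), none of which affects the argument.
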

The proof of Theorem \eqref{thm3} relies on the truncated technique used in \cite{CY, FF, R}. Indeed, when we deal with the supercritical exponent, we can not directly use variational techniques because the corresponding functional is not well-defined on the fractional Sobolev space $\h$ (see Section $2$ for its definition).
To overcome this difficulty, we consider a truncated problem with subcritical growth and applying Theorem \ref{thm1} we obtain the existence of a nontrivial weak solution for the truncated problem. After proving a priori bounds (independent of $\lambda$) for this solution, we use a suitable Moser iteration scheme to verify that the the solution of the truncated problem is indeed a solution of the original problem \eqref{Psupercritico} provided that the parameter $\lambda$ is sufficiently small. 

We would like to emphasize that our results complement and extend in nonlocal magnetic framework the ones in \cite{AFF},  in the sense that we are considering the existence and concentration of nontrivial weak solutions for fractional magnetic problems in the whole space with subcritical, critical and supercritical continuous nonlinearities. In fact, the results presented here seem to be new also in the case $s=1$. 
Moreover, to our knowledge, this is the first time that the penalization technique is used to study fractional problems with magnetic fields. 

The plan of the paper is the following. In Section $2$ we recall some useful results regarding the functional setting. In Section $3$ we provide the proof of Theorem \ref{thm1}. In Section $4$ we focus our attention on the existence of nontrivial solutions to \eqref{Pcritico}. The Section $5$ is devoted to the supercritical problem \eqref{Psupercritico}.

\section{preliminaries and functional setup}
We begin recalling some definitions and results which will be useful along the paper; see \cite{AD, DS} for more details.

Let us denote by $L^{2}(\R^{N}, \C)$ the space of complex-valued functions with summable square, endowed with the real
scalar product 
$$
\langle u, v\rangle_{L^{2}}=\Re\left(\int_{\R^{N}} u \bar{v} dx\right)
$$
for all $u, v\in L^{2}(\R^{N}, \C)$.
We consider the space
$$
\mathcal{D}^{s}_{A}(\R^{N}, \C)=\{u\in L^{2^{*}_{s}}(\R^{N}, \C) : [u]_{s,A}<\infty\}
$$
where
$$
[u]_{s,A}^{2}=\iint_{\R^{2N}} \frac{|u(x)-u(y)e^{\imath A(\frac{x+y}{2})\cdot (x-y)}|^{2}}{|x-y|^{N+2s}} dx dy.
$$
Then, we define the following fractional magnetic Sobolev space
$$
H^{s}_{A}(\R^{N}, \C)=\{u\in L^{2}(\R^{N}, \C): [u]_{s,A}<\infty\}.
$$
Clearly, $H^{s}_{A}(\R^{N}, \C)$ is a Hilbert space with the real scalar product
$$
\langle u, v\rangle_{s, A}=\langle u, v\rangle_{L^{2}}+\Re\iint_{\R^{2N}} \frac{(u(x)-u(y)e^{\imath A(\frac{x+y}{2})\cdot (x-y)})\overline{(v(x)-v(y)e^{\imath A(\frac{x+y}{2})\cdot (x-y)})}}{|x-y|^{N+2s}} dx dy
$$ 
for any $u, v\in H^{s}_{A}(\R^{N}, \C)$. Moreover, $C^{\infty}_{c}(\R^{N}, \C)$ is dense in $H^{s}_{A}(\R^{N}, \C)$ (see Lemma $2.2$ in \cite{AD}).
\begin{thm}\label{Sembedding}\cite{DS}
The space $H^{s}_{A}(\R^{N}, \C)$ is continuously embedded into $L^{r}(\R^{N}, \C)$ for any $r\in [2, \2]$ and compactly embedded into $L^{r}(K, \C)$ for any $r\in [1, \2)$ and any compact $K\subset \R^{N}$.
\end{thm}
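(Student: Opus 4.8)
The plan is to reduce both assertions to the classical (non-magnetic) fractional Sobolev embeddings by means of the \emph{diamagnetic inequality}. The basic observation is the pointwise bound
\[
\bigl| |u|(x) - |u|(y) \bigr| \le \bigl| u(x) - e^{\imath A(\frac{x+y}{2})\cdot (x-y)} u(y) \bigr| \quad \text{for a.e. } x,y\in\R^{N},
\]
valid for every $u\in H^{s}_{A}(\R^{N},\C)$: writing $u(y)=|u(y)|e^{\imath\psi(y)}$ and $\Theta_{x,y}=A(\frac{x+y}{2})\cdot(x-y)$, the reverse triangle inequality in $\C$ gives $|u(x)-e^{\imath\Theta_{x,y}}u(y)|=\bigl||u(x)|e^{\imath\psi(x)}-|u(y)|e^{\imath(\Theta_{x,y}+\psi(y))}\bigr|\ge\bigl||u(x)|-|u(y)|\bigr|$. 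Dividing by $|x-y|^{N+2s}$ and integrating over $\R^{2N}$ yields $[\,|u|\,]_{s}^{2}\le[u]_{s,A}^{2}$, where $[\cdot]_{s}$ is the usual Gagliardo seminorm; since $\||u|\|_{L^{2}}=\|u\|_{L^{2}}$, we conclude $|u|\in H^{s}(\R^{N},\R)$ with $\||u|\|_{H^{s}(\R^{N})}\le\|u\|_{s,A}$, where $\|u\|_{s,A}^{2}=\|u\|_{L^{2}}^{2}+[u]_{s,A}^{2}$.

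For the \emph{continuous} embedding, combine this with the classical fractional Sobolev inequality $\||u|\|_{L^{\2}(\R^{N})}\le C[\,|u|\,]_{s}$ and the identity $\|u\|_{L^{r}(\R^{N},\C)}=\||u|\|_{L^{r}(\R^{N},\R)}$: this gives $\|u\|_{L^{\2}}\le C\|u\|_{s,A}$ and $\|u\|_{L^{2}}=\|u\|_{L^{2}}$, while for $r\in(2,\2)$ one interpolates, $\|u\|_{L^{r}}\le\|u\|_{L^{2}}^{1-\vartheta}\|u\|_{L^{\2}}^{\vartheta}$. In particular this already shows $H^{s}_{A}(\R^{N},\C)\subseteq\mathcal{D}^{s}_{A}(\R^{N},\C)$.

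For the \emph{compact} embedding, some care is needed, because control of $|u_{n}|$ in $L^{r}(K)$ does not by itself yield convergence of $u_{n}$, the phases being free to oscillate. Let $(u_{n})$ be bounded in $H^{s}_{A}(\R^{N},\C)$, let $K\subset\R^{N}$ be compact, and fix $\chi\in C^{\infty}_{c}(\R^{N},\R)$ with $0\le\chi\le1$, $\chi\equiv1$ on $K$ and $\supp\chi\subset B_{R}$. I claim $(\chi u_{n})$ is bounded in the \emph{non-magnetic} space $H^{s}(\R^{N},\C)$. To see this, write
\[
\chi(x)u_{n}(x)-\chi(y)u_{n}(y)=\chi(x)\bigl(u_{n}(x)-e^{\imath\Theta_{x,y}}u_{n}(y)\bigr)+\bigl(\chi(x)e^{\imath\Theta_{x,y}}-\chi(y)\bigr)u_{n}(y).
\]
After dividing by $|x-y|^{N+2s}$ and integrating, the first term contributes at most $[u_{n}]_{s,A}^{2}$. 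For the second, when $|x-y|\le1$ and the factor is nonzero we have $\frac{x+y}{2}\in B_{R+1}$, so (using that $A\in C^{0,\alpha}$ is bounded on compacts) $|\Theta_{x,y}|\le\|A\|_{L^{\infty}(B_{R+1})}|x-y|$ and hence $|\chi(x)e^{\imath\Theta_{x,y}}-\chi(y)|\le|\chi(x)-\chi(y)|+|e^{\imath\Theta_{x,y}}-1|\le C_{R}|x-y|$, giving a contribution $\le C_{R}\|u_{n}\|_{L^{2}}^{2}$ since $N+2s-2<N$; over $\{|x-y|>1\}$ one uses the crude bound $|\chi(x)e^{\imath\Theta_{x,y}}-\chi(y)|^{2}\le2\bigl(\mathbf{1}_{B_{R}}(x)+\mathbf{1}_{B_{R}}(y)\bigr)$ together with Fubini to bound the contribution by $C_{R}\|u_{n}\|_{L^{2}}^{2}$. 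Thus $[\chi u_{n}]_{s}^{2}\le C\bigl([u_{n}]_{s,A}^{2}+\|u_{n}\|_{L^{2}}^{2}\bigr)\le C$ and $\|\chi u_{n}\|_{L^{2}}\le\|u_{n}\|_{L^{2}}$, proving the claim. Since each $\chi u_{n}$ is supported in the fixed ball $B_{R}$, the classical compact fractional Sobolev embedding on bounded sets (applied to real and imaginary parts) gives a subsequence with $\chi u_{n}\to w$ strongly in $L^{r}(B_{R},\C)$ for every $r\in[1,\2)$; restricting to $K$, where $\chi\equiv1$, we obtain $u_{n}\to w$ strongly in $L^{r}(K,\C)$, which is the asserted compactness. \textbf{The main obstacle} is exactly this compactness step: one must genuinely transfer the $H^{s}_{A}$-bound into a bona fide $H^{s}$-bound for the localized sequence, which forces one to dominate the magnetic phase term — and the range $|x-y|$ large, where the linearization $|e^{\imath\Theta}-1|\lesssim|\Theta|$ is useless, has to be treated separately by an elementary $L^{2}$ estimate exploiting the compact support of the cutoff.
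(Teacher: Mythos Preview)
The paper does not give its own proof of this theorem --- it is quoted from \cite{DS} and used as a black box. Your argument is correct and is the natural route to such a result.

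For the continuous embedding your reduction via the diamagnetic inequality (which is exactly Lemma~\ref{DI} in the paper) to the scalar fractional Sobolev inequality, followed by interpolation, is standard and complete.

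For the compact embedding your argument is also sound. The decomposition
\[
\chi(x)u_{n}(x)-\chi(y)u_{n}(y)=\chi(x)\bigl(u_{n}(x)-e^{\imath\Theta_{x,y}}u_{n}(y)\bigr)+\bigl(\chi(x)e^{\imath\Theta_{x,y}}-\chi(y)\bigr)u_{n}(y)
\]
and the subsequent splitting of the second term into the regions $|x-y|\le 1$ (Lipschitz control of the phase, using that $A$ is locally bounded) and $|x-y|>1$ (crude $L^{\infty}$ bound plus integrability of $|x-y|^{-N-2s}$) is exactly the device one needs to transfer the magnetic seminorm bound into a genuine $H^{s}$ bound on the cut-off sequence; the classical compact embedding then closes the argument. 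Two cosmetic points: after squaring the decomposition you should carry a harmless factor of $2$ (so ``contributes at most $[u_{n}]_{s,A}^{2}$'' becomes ``at most $2[u_{n}]_{s,A}^{2}$''); and in the region $|x-y|>1$, the $\mathbf{1}_{B_{R}}(x)$ piece is most cleanly dispatched by noting that $\int_{B_{R}\cap\{|x-y|>1\}}|x-y|^{-N-2s}\,dx\le |B_{R}|$ uniformly in $y$, which is presumably what your ``Fubini'' remark intends.
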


\begin{lem}\label{DI}\cite{DS}
For any $u\in H^{s}_{A}(\R^{N}, \C)$, we get $|u|\in H^{s}(\R^{N},\R)$ and it holds
$$
[|u|]_{s}\leq [u]_{s,A},
$$
where $$[v]^{2}_{s}=\iint_{\R^{2N}} \frac{|v(x)-v(y)|^{2}}{|x-y|^{N+2s}} \, dx dy$$ denotes the Gagliardo seminorm of a real valued function $v:\R^{N}\rightarrow \R$.\\
We also have the following pointwise diamagnetic inequality 
$$
||u(x)|-|u(y)||\leq |u(x)-u(y)e^{\imath A(\frac{x+y}{2})\cdot (x-y)}| \mbox{ a.e. } x, y\in \R^{N}.
$$
\end{lem}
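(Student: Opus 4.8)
The plan is to reduce the whole statement to an elementary pointwise estimate for complex numbers. First I would record the following fact: for any $a, b \in \C$ and any $\theta \in \R$ one has $\bigl||a| - |b|\bigr| \le |a - b e^{\imath\theta}|$. Indeed, since $|b e^{\imath\theta}| = |b|$, the reverse triangle inequality applied to the pair $a,\, b e^{\imath\theta}$ yields simultaneously $|a - b e^{\imath\theta}| \ge |a| - |b e^{\imath\theta}| = |a| - |b|$ and $|a - b e^{\imath\theta}| \ge |b e^{\imath\theta}| - |a| = |b| - |a|$, whence the claim. Applying this with $a = u(x)$, $b = u(y)$ and $\theta = A(\frac{x+y}{2})\cdot(x-y)$ — a well-defined real number for every $x\neq y$ by continuity of $A$ — gives at once the pointwise diamagnetic inequality
$$
\bigl||u(x)| - |u(y)|\bigr| \le \bigl|u(x) - u(y)e^{\imath A(\frac{x+y}{2})\cdot(x-y)}\bigr| \qquad \mbox{a.e. } x, y \in \R^{N}.
$$

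Next I would square both sides, divide by $|x-y|^{N+2s}$, and integrate over $\R^{2N}$; monotonicity of the integral of nonnegative measurable functions then gives
$$
[|u|]_{s}^{2} = \iint_{\R^{2N}} \frac{\bigl||u(x)|-|u(y)|\bigr|^{2}}{|x-y|^{N+2s}}\, dx\, dy \le \iint_{\R^{2N}} \frac{|u(x)-u(y)e^{\imath A(\frac{x+y}{2})\cdot(x-y)}|^{2}}{|x-y|^{N+2s}}\, dx\, dy = [u]_{s,A}^{2},
$$
so that $[|u|]_{s} \le [u]_{s,A} < \infty$. Finally, since $\bigl||u(x)|\bigr| = |u(x)|$ pointwise we have $\||u|\|_{L^{2}(\R^{N},\R)} = \|u\|_{L^{2}(\R^{N},\C)} < \infty$; together with the finiteness of the Gagliardo seminorm just obtained, this shows $|u| \in H^{s}(\R^{N},\R)$.

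I do not expect any serious obstacle in this argument: the only points deserving a word of care are that the phase $A(\frac{x+y}{2})\cdot(x-y)$ is everywhere defined (so that the scalar inequality, valid for all triples $(a,b,\theta)$, transfers to an a.e.\ inequality between the measurable functions $(x,y)\mapsto u(x)$ and $(x,y)\mapsto u(y)$), and that a chain of inequalities between nonnegative measurable integrands may be integrated termwise. Everything else is just the reverse triangle inequality.
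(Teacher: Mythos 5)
Your argument is correct and is essentially the standard proof of this lemma: the paper itself states it without proof, citing \cite{DS}, and the proof given there rests on exactly the same observation, namely that $|u(y)e^{\imath A(\frac{x+y}{2})\cdot(x-y)}|=|u(y)|$ so the reverse triangle inequality yields the pointwise diamagnetic inequality, after which one integrates against the singular kernel. Nothing further is needed.
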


\begin{remark}
Since $s\in (0, 1)$ is fixed, in order to simplify the notation, we will write $[\cdot]$ and $[\cdot]_{A}$ to denote  $[\cdot]_{s}$ and  $[\cdot]_{s,A}$, respectively. 
\end{remark}

\begin{lem}\label{aux}\cite{AD}
If $u\in H^{s}(\R^{N}, \R)$ and $u$ has compact support, then $w=e^{\imath A(0)\cdot x} u \in H^{s}_{A}(\R^{N}, \C)$.
\end{lem}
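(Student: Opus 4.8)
The plan is to transfer the estimate from the magnetic Gagliardo seminorm of $w$ to the ordinary Gagliardo seminorm of $u$, paying an error term controlled by the local boundedness of $A$ together with the compactness of $\supp u$. Fix $R>0$ with $\supp u\subset B_{R}$. Since $|w|=|u|\in L^{2}(\R^{N},\R)$, only $[w]_{s,A}$ needs to be bounded. First I would record the algebraic identity obtained by writing $w(x)=e^{\imath A(0)\cdot x}u(x)$ and using $A(0)\cdot(y-x)+A(\tfrac{x+y}{2})\cdot(x-y)=(A(\tfrac{x+y}{2})-A(0))\cdot(x-y)$: since multiplication by $e^{\imath A(0)\cdot x}$ is unimodular,
$$
\bigl|w(x)-w(y)e^{\imath A(\frac{x+y}{2})\cdot(x-y)}\bigr|=\bigl|u(x)-u(y)e^{\imath(A(\frac{x+y}{2})-A(0))\cdot(x-y)}\bigr|.
$$
Adding and subtracting $u(y)$ and using $|1-e^{\imath t}|\le\min\{|t|,2\}$ together with $(a+b)^{2}\le2a^{2}+2b^{2}$ gives
$$
\bigl|w(x)-w(y)e^{\imath A(\frac{x+y}{2})\cdot(x-y)}\bigr|^{2}\le 2|u(x)-u(y)|^{2}+2|u(y)|^{2}\,\bigl|1-e^{\imath(A(\frac{x+y}{2})-A(0))\cdot(x-y)}\bigr|^{2}.
$$
Integrating over $\R^{2N}$ against $|x-y|^{-N-2s}$, the first term contributes $2[u]_{s}^{2}<\infty$.

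For the second term I would note that $u(y)\neq0$ forces $y\in B_{R}$, and then split the $x$-integral into $\{|x-y|\le1\}$ and $\{|x-y|>1\}$. On $\{|x-y|\le1\}$ one has $\tfrac{x+y}{2}\in B_{R+1}$, so $A$ is bounded there and hence $\bigl|1-e^{\imath(A(\frac{x+y}{2})-A(0))\cdot(x-y)}\bigr|\le C|x-y|$; the resulting contribution is bounded by $C^{2}\|u\|_{L^{2}}^{2}\int_{|z|\le1}|z|^{-N-2s+2}\,dz$, which is finite precisely because $s<1$. On $\{|x-y|>1\}$ the crude bound $\bigl|1-e^{\imath(\cdots)}\bigr|\le2$ yields a contribution bounded by $4\|u\|_{L^{2}}^{2}\int_{|z|>1}|z|^{-N-2s}\,dz<\infty$. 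Combining the three pieces gives $[w]_{s,A}<\infty$, hence $w\in H^{s}_{A}(\R^{N},\C)$ by Lemma \ref{DI}'s ambient space and the definition of the norm.

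The only non-automatic point is that the integral must be split according to the size of $|x-y|$: in the far-field region one cannot linearise the exponential, since $A$ is not assumed to decay, so one falls back on $|1-e^{\imath t}|\le2$ and the integrability of $|z|^{-N-2s}$ at infinity; near the diagonal, conversely, one must linearise and use $N+2s-2<N$, i.e.\ the fact that $s<1$, which is exactly where the restriction $s\in(0,1)$ enters. Everything else is a routine application of the triangle inequality and Fubini.
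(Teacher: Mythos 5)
Your proof is correct, and it follows essentially the same route as the argument in the cited reference \cite{AD} (the paper itself only quotes the lemma from there): factor out the unimodular phase to reduce to $|u(x)-u(y)e^{\imath(A(\frac{x+y}{2})-A(0))\cdot(x-y)}|$, split off $2[u]_{s}^{2}$, and control the remainder by splitting into $|x-y|\leq 1$ (linearising the exponential, using local boundedness of $A$ and $s<1$) and $|x-y|>1$ (using $|1-e^{\imath t}|\leq 2$ and the tail integrability of $|z|^{-N-2s}$). No gaps to report.
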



Using the change of variable $u(x)\mapsto u(\e x)$, we can see that (\ref{P}) is equivalent to the following problem
\begin{equation}\label{R}
(-\Delta)^{s}_{A_{\e}} u + V_{\e}(x)u =  f(|u|^{2})u \mbox{ in } \R^{N},
\end{equation}
where $A_{\e}(x):=A(\e x)$ and $V_{\e}(x):=V(\e x)$. \\
Fix $k>\frac{\theta}{\theta-2}$ and $a>0$ such that $f(a)=\frac{V_{0}}{k}$, and we introduce the functions
$$
\tilde{f}(t):=
\begin{cases}
f(t)& \text{ if $t \leq a$} \\
\frac{V_{0}}{k}    & \text{ if $t >a$}
\end{cases}
$$ 
and
$$
g(x, t)=\chi_{\Lambda}(x)f(t)+(1-\chi_{\Lambda}(x))\tilde{f}(t),
$$
where $\chi_{\Lambda}$ is the characteristic function on $\Lambda$, and  we write $G(x, t)=\int_{0}^{t} g(x, \tau)\, d\tau$.\\
From assumptions $(f_1)$-$(f_4)$, it follows that $g$ verifies the following properties:
\begin{compactenum}[($g_1$)]
\item $\displaystyle{\lim_{t\rightarrow 0} g(x, t)=0}$ uniformly in $x\in \R^{N}$;
\item $\lim_{t\rightarrow \infty} \frac{g(x,t)}{t^{\frac{q-2}{2}}}=0$ uniformly in $x\in \R^{N}$;
\item $(i)$ $0< \frac{\theta}{2} G(x, t)\leq g(x, t)t$ for any $x\in \Lambda$ and $t>0$, \\
$(ii)$ $0\leq  G(x, t)\leq g(x, t)t\leq \frac{V(x)}{k}t$ for any $x\in \R^{N}\setminus \Lambda$ and $t>0$;
\item $t\mapsto g(x,t)$ is increasing for $t>0$.
\end{compactenum}

Let us consider the following auxiliary problem 
\begin{equation}\label{Pe}
(-\Delta)^{s}_{A_{\e}} u + V_{\e}(x)u =  g_{\e}(x, |u|^{2})u \mbox{ in } \R^{N}, 
\end{equation}
where $g_{\e}(x, t):=g(\e x, t)$. 
Let us note that if $u$ is a solution of (\ref{Pe}) such that 
\begin{equation}\label{ue}
|u(x)|\leq a \mbox{ for all } x\in  \R^{N}\setminus \Lambda_{\e},
\end{equation}
where $\Lambda_{\e}:=\{x\in \R^{N}: \e x\in \Lambda\}$, then $u$ is also a solution of the original problem  (\ref{R}).

It is clear that weak solutions to (\ref{Pe}) can be found as critical points of the Euler-Lagrange functional
\begin{align*}
J_{\e}(u)
=\frac{1}{2}\|u\|^{2}_{\e}-\frac{1}{2}\int_{\R^{N}} G_{\e}(x, |u|^{2})\, dx
\end{align*}
which is well-defined for any function $u: \R^{N}\rightarrow \C$ belonging to the space
$$
\h=\left\{u\in \mathcal{D}^{s}_{A_{\e}}(\R^{N}, \C): \int_{\R^{N}} V_{\e}(x) |u|^{2}\, dx<\infty\right\}
$$
endowed with the norm 
$$
\|u\|^{2}_{\e}:=[u]^{2}_{A_{\e}}+\|\sqrt{V_{\e}} |u|\|^{2}_{L^{2}(\R^{N})}.
$$
We also consider the autonomous problem associated with \eqref{Pe}, that is
\begin{equation}\label{APe}
(-\Delta)^{s} u + V_{0}u =  f(u^{2})u \mbox{ in } \R^{N}, 
\end{equation}
and we denote by $I_{0}: H^{s}(\R^{N}, \R)\rightarrow \R$ the corresponding energy functional
\begin{align*}
I_{0}(u)
=\frac{1}{2}\|u\|^{2}_{0}-\frac{1}{2}\int_{\R^{N}} F(u^{2})\, dx
\end{align*}
where we used the notation $\|u\|_{0}^{2}:=[u]^{2}+\|\sqrt{V_{0}} |u|\|^{2}_{L^{2}(\R^{N})}$ which is a norm in $H^{s}(\R^{N}, \R)$ equivalent to the standard one.

In what follows, we show that $J_{\e}$ verifies the assumptions of the mountain pass theorem \cite{AR}. 
\begin{lem}\label{MPG}
\begin{compactenum}[$(i)$]
\item $J_{\e}(0)=0$;
\item there exist $\alpha, \rho>0$ such that $J_{\e}(u)\geq \alpha$ for any $u\in \h$ such that $\|u\|_{\e}=\rho$;
\item there exists $e\in \h$ with $\|e\|_{\e}>\rho$ such that $J_{\e}(e)<0$.
\end{compactenum}
\end{lem}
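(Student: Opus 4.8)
The plan is to verify the three conditions directly from the definition of $J_\e$ and the properties $(g_1)$--$(g_3)$ of the penalized nonlinearity. Condition $(i)$ is immediate: since $G_\e(x,0)=0$ and $\|0\|_\e=0$, we get $J_\e(0)=0$.

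For $(ii)$, first I would derive a pointwise growth bound for $G_\e$. Fix $\zeta>0$ small. By $(g_1)$ there is $\delta>0$ with $g(x,t)\le \zeta$ for $0\le t\le\delta$, and by $(g_2)$ there is $C_\zeta>0$ with $g(x,t)\le \zeta + C_\zeta t^{(q-2)/2}$ for all $t\ge 0$ and all $x$; integrating in $t$ (with the substitution that the argument of $g$ is $|u|^2$) yields
\begin{equation*}
G_\e(x,|u|^2)\le \zeta |u|^2 + C_\zeta' |u|^{q}\quad\text{for a.e. }x\in\R^N.
\end{equation*}
Hence $\frac12\int_{\R^N}G_\e(x,|u|^2)\,dx\le \frac{\zeta}{2}\|\,|u|\,\|_{L^2}^2 + C_\zeta'\|\,|u|\,\|_{L^q}^q$. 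Now by Lemma \ref{DI} one has $[\,|u|\,]\le [u]_{A_\e}$, so $\|\,|u|\,\|_{H^s(\R^N)}$ and $\|u\|_\e$ are comparable (using $(V_1)$: $V_\e\ge V_1>0$), and by the fractional Sobolev embedding $H^s(\R^N)\hookrightarrow L^q(\R^N)$ (valid since $q\in(2,\2)$, cf. Theorem \ref{Sembedding}) we get $\|\,|u|\,\|_{L^q}^q\le C\|u\|_\e^q$ and $\|\,|u|\,\|_{L^2}^2\le C\|u\|_\e^2$. Therefore
\begin{equation*}
J_\e(u)\ge \tfrac12\|u\|_\e^2 - \tfrac{\zeta}{2}C\|u\|_\e^2 - C_\zeta' C\|u\|_\e^{q} = \Bigl(\tfrac12 - \tfrac{\zeta C}{2}\Bigr)\|u\|_\e^2 - C''\|u\|_\e^q.
\end{equation*}
Choosing $\zeta$ so small that $\frac12-\frac{\zeta C}{2}\ge\frac14$, and then using $q>2$, the function $\rho\mapsto \frac14\rho^2 - C''\rho^q$ is positive for $\rho>0$ sufficiently small; picking such a $\rho$ and setting $\alpha:=\frac14\rho^2 - C''\rho^q>0$ gives $J_\e(u)\ge\alpha$ whenever $\|u\|_\e=\rho$.

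For $(iii)$, I would use the superquadratic condition. From $(g_3)(i)$, for $x\in\Lambda$ and $t>0$ one has $\frac{\theta}{2}G(x,t)\le g(x,t)t$, which by integrating the differential inequality $\frac{d}{dt}\log G(x,t)\ge \frac{\theta}{2t}$ on an interval $[t_0,t]$ with $t_0$ fixed yields $G(x,t)\ge c_1 t^{\theta/2} - c_2$ for $t$ large, uniformly for $x$ in a compact subset of $\Lambda$; since $0\in\Lambda$ is an interior point, this holds on some ball $B_R\subset\Lambda$. Fix a nonzero $u_0\in C_c^\infty(\R^N,\C)$ supported in $B_{R/\e}\subset\Lambda_\e$ (so that $\e x\in B_R\subset\Lambda$ on its support); then for $\tau>0$,
\begin{equation*}
J_\e(\tau u_0) = \frac{\tau^2}{2}\|u_0\|_\e^2 - \frac12\int_{\R^N} G_\e(x,\tau^2|u_0|^2)\,dx \le \frac{\tau^2}{2}\|u_0\|_\e^2 - c_1'\,\tau^{\theta}\int_{\supp u_0}|u_0|^{\theta}\,dx + c_2',
\end{equation*}
and since $\theta>2$ the right-hand side tends to $-\infty$ as $\tau\to+\infty$. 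Hence there is $\tau_0$ large with $\|\tau_0 u_0\|_\e>\rho$ and $J_\e(\tau_0 u_0)<0$; set $e:=\tau_0 u_0$.

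The main obstacle is a bookkeeping one rather than a conceptual one: carefully combining the bound for $G_\e$ coming from $(g_1)$--$(g_2)$ on all of $\R^N$ (which only gives the subcritical control needed for $(ii)$) with the stronger lower bound from $(g_3)(i)$ that is valid only on $\Lambda$ (hence the need to choose the test function $e$ supported inside $\Lambda_\e$). One must also be careful that the superquadratic lower bound on $G$ derived from the Ambrosetti--Rabinowitz-type inequality $(g_3)(i)$ is applied only where it holds, and that the seminorm of the complex test function $e^{\imath A(0)\cdot x}u_0$-type construction (Lemma \ref{aux}) is finite — but since we take $u_0\in C_c^\infty$ this is automatic.
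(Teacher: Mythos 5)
Your proposal is correct and follows essentially the same route as the paper: the estimate $J_{\e}(u)\geq \frac{1}{2}\|u\|_{\e}^{2}-\delta\|u\|_{\e}^{2}-C_{\delta}\|u\|_{\e}^{q}$ obtained from $(g_1)$--$(g_2)$ together with the Sobolev embedding (in your case via the diamagnetic inequality of Lemma \ref{DI} plus $(V_1)$, in the paper via Theorem \ref{Sembedding}) gives $(ii)$, and the Ambrosetti--Rabinowitz bound $G_{\e}(x,t)\geq C t^{\theta/2}-C$ applied to a nontrivial function supported in $\Lambda_{\e}$ gives $(iii)$ since $\theta>2$. The only differences are cosmetic: you spell out the integration of $(g_3)(i)$ and restrict the test function to a ball $B_{R/\e}\subset\Lambda_{\e}$, while the paper takes any $u\in\h\setminus\{0\}$ with $\supp(u)\subset\Lambda_{\e}$.
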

\begin{proof}
Using $(g_1)$-$(g_2)$ and Theorem \ref{Sembedding}, we can see that for any $\delta>0$ there exists $C_{\delta}>0$ such that
$$
J_{\e}(u)\geq \frac{1}{2}\|u\|^{2}_{\e}-\delta \|u\|^{2}_{\e}-C_{\delta} \|u\|^{q}_{\e}.
$$
Choosing $\delta>0$ sufficiently small, we can conclude that $(i)$ holds.
Regarding $(ii)$, we can note that in view of $(g_3)$, for any $u\in \h\setminus\{0\}$ with $supp(u)\subset \Lambda_{\e}$ and $t>0$ we have
\begin{align*}
J_{\e}(tu)&\leq \frac{t^{2}}{2} \|u\|^{2}_{\e}-\frac{1}{2}\int_{\Lambda_{\e}} G_{\e}(x, t^{2}|u|^{2})\, dx \\
&\leq \frac{t^{2}}{2} \|u\|^{2}_{\e}-Ct^{\theta} \int_{\Lambda_{\e}} |u|^{\theta}\, dx+C
\end{align*}
which implies that $J_{\e}(tu)\rightarrow -\infty$ as $t\rightarrow \infty$.
\end{proof}

\begin{lem}\label{PSc}
Let $c\in \R$. Then $J_{\e}$ satisfies the Palais-Smale condition at the level $c$.
\end{lem}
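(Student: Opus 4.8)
The plan is to fix a level $c\in\R$ and a sequence $(u_n)\subset\h$ with $J_\e(u_n)\to c$ and $J'_\e(u_n)\to 0$ in $(\h)^*$, and to show $(u_n)$ has a convergent subsequence. First I would establish boundedness of $(u_n)$ in $\h$. This is the standard Ambrosetti--Rabinowitz computation, but one must use the split condition $(g_3)$: writing $J_\e(u_n)-\frac{1}{\theta}\langle J'_\e(u_n),u_n\rangle$, the contribution from $\Lambda_\e$ is controlled from below by $(g_3)(i)$, i.e. $\frac{1}{2}g_\e(x,t)t-\frac{1}{\theta}G_\e(x,t)\ge 0$ up to the factor $(\frac12-\frac1\theta)$, while on $\R^N\setminus\Lambda_\e$ one uses $(g_3)(ii)$ to bound $\frac{1}{\theta}\int_{\R^N\setminus\Lambda_\e} g_\e(x,|u_n|^2)|u_n|^2\le \frac{1}{\theta k}\int_{\R^N\setminus\Lambda_\e} V_\e(x)|u_n|^2$. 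Since $k>\frac{\theta}{\theta-2}$ was chosen precisely so that $\frac{1}{\theta k}<\frac12-\frac1\theta$, the bad term is absorbed and one gets $(\frac12-\frac1\theta)\|u_n\|_\e^2 - \frac{1}{\theta k}\|\sqrt{V_\e}\,|u_n|\|_{L^2}^2 \le c + 1 + o(\|u_n\|_\e)$, yielding $\sup_n\|u_n\|_\e<\infty$.

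Next, since $\h$ is a Hilbert space, up to a subsequence $u_n\rightharpoonup u$ in $\h$, $u_n\to u$ in $L^r_{loc}(\R^N,\C)$ for $r\in[1,\2)$ by Theorem 2.1, and $u_n\to u$ a.e. The key point is to upgrade this to strong convergence $\|u_n-u\|_\e\to 0$. I would show that the mass of $(u_n)$ does not escape to infinity by exploiting the sign structure outside $\Lambda$: because $g_\e(x,t)t\le \frac{1}{k}V_\e(x)t$ on $\R^N\setminus\Lambda_\e$ with $k>1$, the functional $J_\e$ behaves coercively "at infinity". Concretely, for $R$ large so that $\Lambda_\e\subset B_R$, testing $J'_\e(u_n)$ against a cutoff of $u_n$ supported outside $B_R$ and using $(g_3)(ii)$ gives $\bigl(1-\tfrac1k\bigr)\int_{\R^N\setminus B_R}\bigl([\,\cdot\,]\text{-density}+V_\e|u_n|^2\bigr)\le o(1)+\text{(tail terms)}$, so the $\|\cdot\|_\e$-mass of $u_n$ outside large balls is uniformly small. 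Combined with the local compact embedding, this gives $u_n\to u$ strongly in $L^q(\R^N,\C)$ (and in $L^2$), which by $(g_1)$--$(g_2)$ and a standard dominated-convergence / Brezis--Lieb argument forces $\int G_\e(x,|u_n|^2)\to\int G_\e(x,|u|^2)$ and $\int g_\e(x,|u_n|^2)|u_n|^2\to\int g_\e(x,|u|^2)|u|^2$, and likewise $\langle J'_\e(u_n),u_n-u\rangle_{\text{nonlinear part}}\to 0$.

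Finally, from $\langle J'_\e(u_n),u_n\rangle\to 0$, $\langle J'_\e(u_n),u\rangle\to 0$ (weak convergence), and the convergence of the nonlinear terms just obtained, I conclude $\|u_n\|_\e^2\to \|u\|_\e^2$. Together with $u_n\rightharpoonup u$ in the Hilbert space $\h$, norm convergence plus weak convergence gives $u_n\to u$ strongly in $\h$, which is the Palais--Smale condition at level $c$. The main obstacle is the middle step: ruling out loss of compactness at infinity. Unlike the Rabinowitz-type global condition, here $V$ is only controlled locally, so one cannot simply invoke a global embedding; the argument genuinely relies on the penalization, i.e. on the fact that $g$ was truncated outside $\Lambda$ so that the nonlinear term is dominated by $\tfrac1k V_\e$ there, making the quadratic form $\|\cdot\|_\e^2 - \tfrac1k\|\sqrt{V_\e}\,\cdot\,\|_{L^2}^2$ still coercive on the complement of $\Lambda_\e$ and thereby preventing escaping mass.
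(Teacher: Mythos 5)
Your plan follows the paper's proof in structure: boundedness via the Ambrosetti--Rabinowitz computation with the penalization constant $k>\frac{\theta}{\theta-2}$, a uniform smallness estimate for the $\|\cdot\|_{\e}$-mass of $u_{n}$ outside large balls obtained by testing $J_{\e}'(u_{n})$ against a cut-off of $u_{n}$ with $\Lambda_{\e}$ inside the ball and invoking $(g_3)$-$(ii)$, and finally convergence of the nonlinear terms, $\|u_{n}\|_{\e}\to\|u\|_{\e}$, and weak-plus-norm convergence in the Hilbert space $\h$ to conclude. This is exactly the scheme of the paper's Lemma \ref{PSc}.

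There is, however, a genuine gap at the point you dismiss as ``(tail terms)''. In the nonlocal magnetic setting the quadratic form does not localize: testing with $\eta_{R}u_{n}$ produces, besides the good term involving $\eta_{R}(x)$ times the magnetic Gagliardo density, the cross term
\begin{equation*}
\Re\left(\iint_{\R^{2N}} \overline{u_{n}(y)}e^{-\imath A_{\e}(\frac{x+y}{2})\cdot (x-y)}\frac{(u_{n}(x)-u_{n}(y)e^{\imath A_{\e}(\frac{x+y}{2})\cdot (x-y)})(\eta_{R}(x)-\eta_{R}(y))}{|x-y|^{N+2s}}\,dx\,dy\right),
\end{equation*}
and after Cauchy--Schwarz your claimed uniform tail bound follows only if one proves
\begin{equation*}
\limsup_{R\to\infty}\limsup_{n\to\infty}\iint_{\R^{2N}}|u_{n}(y)|^{2}\frac{|\eta_{R}(x)-\eta_{R}(y)|^{2}}{|x-y|^{N+2s}}\,dx\,dy=0,
\end{equation*}
which is precisely \eqref{PS3} and is where the bulk of the paper's proof is spent. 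Unlike the local case, where $|\nabla\eta_{R}|\leq C/R$ makes the analogous term trivially $O(R^{-1})$, here one must decompose $\R^{2N}$ into regions (both points far from the ball, one far and one near, one inside), use the Lipschitz bound on $\eta_{R}$ together with the decay of the kernel, H\"older's inequality, the compact embedding giving $|u_{n}|\to|u|$ in $L^{2}_{loc}(\R^{N},\R)$, and $u\in L^{2^{*}_{s}}(\R^{N},\R)$ so that the annular contributions vanish as $R\to\infty$. Asserting that ``the $\|\cdot\|_{\e}$-mass of $u_{n}$ outside large balls is uniformly small'' without this estimate assumes the conclusion of the key step; the remainder of your argument (convergence of the nonlinear terms via the growth of $g$ and the norm-plus-weak-convergence endgame) is correct and matches the paper.
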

\begin{proof}
Let $(u_{n})\subset \h$ be a $(PS)_{c}$ sequence. Then $(u_{n})$ is bounded. Indeed, using $(g_3)$, we have
\begin{align*}
c+o_{n}(1)\|u_{n}\|_{\e}&\geq J_{\e}(u_{n})-\frac{1}{\theta}\langle J'_{\e}(u_{n}), u_{n}\rangle \\
&\geq \left(\frac{1}{2}-\frac{1}{\theta}\right)\|u_{n}\|^{2}_{\e}+\frac{1}{\theta}\int_{\R^{N}\setminus \Lambda_{\e}} \left[g_{\e}(x, |u_{n}|^{2})|u_{n}|^{2}\,-\frac{\theta}{2} G_{\e}(x, |u_{n}|^{2})\right]\, dx\\
&\geq \frac{1}{2}\left(\frac{\theta-2}{\theta}-\frac{1}{k}\right)\|u_{n}\|^{2}_{\e},
\end{align*}
and recalling that $k>\frac{\theta}{\theta-2}$ we get the thesis.
Now, we show that for any $\xi>0$ there exists $R=R_{\xi}>0$ such that 
\begin{equation}\label{T}
\limsup_{n\rightarrow \infty} \int_{\R^{N}\setminus B_{R}} \int_{\R^{N}}  \frac{|u_{n}(x)-u_{n}(y)e^{\imath A_{\e}(\frac{x+y}{2})\cdot (x-y)}|^{2}}{|x-y|^{N+2s}} dx dy+\int_{\R^{N}\setminus B_{R}} V_{\e}(x)|u_{n}|^{2}\, dx\leq \xi.
\end{equation}
Assume that the above claim is true and we show how it can be used to conclude the proof of lemma.
We know that $u_{n}\rightharpoonup u$ in $\h$. Since $\h \Subset L^{r}_{loc}(\R^{N}, \C)$ and $g$ has subcritical growth, it is easy to prove that $J'_{\e}(u)=0$. In particular,
$$
\|u\|^{2}_{\e}=\int_{\R^{N}} g_{\e}(x, |u|^{2})|u|^{2}\, dx.
$$
Recalling that $\langle J'_{\e}(u_{n}), u_{n}\rangle =o_{n}(1)$, we can infer that 
$$
\|u_{n}\|^{2}_{\e}=\int_{\R^{N}} g_{\e}(x, |u_{n}|^{2})|u_{n}|^{2}\, dx+o_{n}(1).
$$
Therefore, using the above claim, Theorem \ref{Sembedding} and $(g_1)$-$(g_2)$ we can conclude that
$$
\lim_{n\rightarrow \infty} \int_{\R^{N}} g_{\e}(x, |u_{n}|^{2})|u_{n}|^{2}\, dx=\int_{\R^{N}} g_{\e}(x, |u|^{2})|u|^{2}\, dx
$$
which yields
$$
\lim_{n\rightarrow \infty}\|u_{n}\|^{2}_{\e}=\|u\|^{2}_{\e}.
$$
Now we show \eqref{T}. Let $\eta_{R}\in C^{\infty}(\R^{N}, \R)$ be such that $0\leq \eta_{R}\leq 1$, $\eta_{R}=0$ in $B_{\frac{R}{2}}$, $\eta_{R}=1$ in $\R^{N}\setminus B_{R}$ and $|\nabla \eta_{R}|\leq \frac{C}{R}$ for some $C>0$ independent of $R$.
Since $\langle J'_{\e}(u_{n}), \eta_{R}u_{n}\rangle =o_{n}(1)$ we have
\begin{align*}
&\Re\left(\iint_{\R^{2N}} \frac{(u_{n}(x)-u_{n}(y)e^{\imath A_{\e}(\frac{x+y}{2})\cdot (x-y)})\overline{(u_{n}(x)\eta_{R}(x)-u_{n}(y)\eta_{R}(y)e^{\imath A_{\e}(\frac{x+y}{2})\cdot (x-y)})}}{|x-y|^{N+2s}}\, dx dy \right)\\
&\quad +\int_{\R^{N}} V_{\e}(x)\eta_{R} |u_{n}|^{2}\, dx=\int_{\R^{N}} g_{\e}(x, |u_{n}|^{2})|u_{n}|^{2}\eta_{R}\, dx+o_{n}(1).
\end{align*}
Fix $R>0$ such that $\Lambda_{\e}\subset B_{R/2}$. Taking into account that
\begin{align*}
&\Re\left(\iint_{\R^{2N}} \frac{(u_{n}(x)-u_{n}(y)e^{\imath A_{\e}(\frac{x+y}{2})\cdot (x-y)})\overline{(u_{n}(x)\eta_{R}(x)-u_{n}(y)\eta_{R}(y)e^{\imath A_{\e}(\frac{x+y}{2})\cdot (x-y)})}}{|x-y|^{N+2s}}\, dx dy \right)\\
&=\Re\left(\iint_{\R^{2N}} \overline{u_{n}(y)}e^{-\imath A_{\e}(\frac{x+y}{2})\cdot (x-y)}\frac{(u_{n}(x)-u_{n}(y)e^{\imath A_{\e}(\frac{x+y}{2})\cdot (x-y)})(\eta_{R}(x)-\eta_{R}(y))}{|x-y|^{N+2s}}  \,dx dy\right)\\
&\quad +\iint_{\R^{2N}} \eta_{R}(x)\frac{|u_{n}(x)-u_{n}(y)e^{\imath A_{\e}(\frac{x+y}{2})\cdot (x-y)}|^{2}}{|x-y|^{N+2s}}\, dx dy,
\end{align*}
and using $(g_3)$-$(ii)$, we have
\begin{align}\label{PS1}
&\iint_{\R^{2N}} \eta_{R}(x)\frac{|u_{n}(x)-u_{n}(y)e^{\imath A_{\e}(\frac{x+y}{2})\cdot (x-y)}|^{2}}{|x-y|^{N+2s}}\, dx dy+\int_{\R^{N}} V_{\e}(x)\eta_{R} |u_{n}|^{2}\, dx\nonumber\\
&\leq -\Re\left(\iint_{\R^{2N}} \overline{u_{n}(y)}e^{-\imath A_{\e}(\frac{x+y}{2})\cdot (x-y)}\frac{(u_{n}(x)-u_{n}(y)e^{\imath A_{\e}(\frac{x+y}{2})\cdot (x-y)})(\eta_{R}(x)-\eta_{R}(y))}{|x-y|^{N+2s}}  \,dx dy\right) \nonumber\\
&\quad +\frac{1}{k}\int_{\R^{N}} V_{\e}(x) \eta_{R} |u_{n}|^{2}\, dx+o_{n}(1).
\end{align}
From the H\"older inequality and the boundedness of $(u_{n})$ in $\h$ it follows that
\begin{align}\label{PS2}
&\left|\Re\left(\iint_{\R^{2N}} \overline{u_{n}(y)}e^{-\imath A_{\e}(\frac{x+y}{2})\cdot (x-y)}\frac{(u_{n}(x)-u_{n}(y)e^{\imath A_{\e}(\frac{x+y}{2})\cdot (x-y)})(\eta_{R}(x)-\eta_{R}(y))}{|x-y|^{N+2s}}  \,dx dy\right)\right| \nonumber\\
&\leq \left(\iint_{\R^{2N}} \frac{|u_{n}(x)-u_{n}(y)e^{\imath A_{\e}(\frac{x+y}{2})\cdot (x-y)}|^{2}}{|x-y|^{N+2s}}\,dxdy  \right)^{\frac{1}{2}} \left(\iint_{\R^{2N}} |\overline{u_{n}(y)}|^{2}\frac{|\eta_{R}(x)-\eta_{R}(y)|^{2}}{|x-y|^{N+2s}} \, dxdy\right)^{\frac{1}{2}} \nonumber\\
&\leq C \left(\iint_{\R^{2N}} |u_{n}(y)|^{2}\frac{|\eta_{R}(x)-\eta_{R}(y)|^{2}}{|x-y|^{N+2s}} \, dxdy\right)^{\frac{1}{2}}.
\end{align}
Now we prove that
\begin{equation}\label{PS3}
\limsup_{R\rightarrow \infty}\limsup_{n\rightarrow \infty} \iint_{\R^{2N}} |u_{n}(y)|^{2}\frac{|\eta_{R}(x)-\eta_{R}(y)|^{2}}{|x-y|^{N+2s}} \, dxdy=0.
\end{equation}
Firstly, we note that  
$$
\R^{2N}=((\R^{N}\setminus B_{2R})\times (\R^{N}\setminus B_{2R})) \cup ((\R^{N}\setminus B_{2R})\times B_{2R})\cup (B_{2R}\times \R^{N})=: X^{1}_{R}\cup X^{2}_{R} \cup X^{3}_{R}.
$$
Consequently,
\begin{align}\label{Pa1}
&\iint_{\R^{2N}}\frac{|\eta_{R}(x)-\eta_{R}(y)|^{2}}{|x-y|^{N+2s}} |u_{n}(x)|^{2} dx dy =\iint_{X^{1}_{R}}\frac{|\eta_{R}(x)-\eta_{R}(y)|^{2}}{|x-y|^{N+2s}} |u_{n}(x)|^{2} dx dy \nonumber \\
&\quad +\iint_{X^{2}_{R}}\frac{|\eta_{R}(x)-\eta_{R}(y)|^{2}}{|x-y|^{N+2s}} |u_{n}(x)|^{2} dx dy+
\iint_{X^{3}_{R}}\frac{|\eta_{R}(x)-\eta_{R}(y)|^{2}}{|x-y|^{N+2s}} |u_{n}(x)|^{2} dx dy.
\end{align}
Since $\eta_{R}=1$ in $\R^{N}\setminus B_{2R}$, we can see that
\begin{align}\label{Pa2}
\iint_{X^{1}_{R}}\frac{|u_{n}(x)|^{2}|\eta_{R}(x)-\eta_{R}(y)|^{2}}{|x-y|^{N+2s}} dx dy=0.
\end{align}
Now, fix $K>4$. Then
\begin{equation*}
X^{2}_{R}=(\R^{N} \setminus B_{2R})\times B_{2R} \subset ((\R^{N}\setminus B_{KR})\times B_{2R})\cup ((B_{KR}\setminus B_{2R})\times B_{2R}). 
\end{equation*}
Let us note that if $(x, y) \in (\R^{N}\setminus B_{KR})\times B_{2R}$ then
\begin{equation*}
|x-y|\geq |x|-|y|\geq |x|-2R>\frac{|x|}{2}. 
\end{equation*}
Therefore, using the above inequality, $0\leq \eta_{R}\leq 1$, $|\nabla \eta_{R}|\leq \frac{C}{R}$ and applying the H\"older inequality we obtain
\begin{align}\label{Pa3}
&\iint_{X^{2}_{R}}\frac{|u_{n}(x)|^{2}|\eta_{R}(x)-\eta_{R}(y)|^{2}}{|x-y|^{N+2s}} dx dy \nonumber \\
&=\int_{\R^{N}\setminus B_{KR}} \int_{B_{2R}} \frac{|u_{n}(x)|^{2}|\eta_{R}(x)-\eta_{R}(y)|^{2}}{|x-y|^{N+2s}} dx dy + \int_{B_{KR}\setminus B_{2R}} \int_{B_{2R}} \frac{|u_{n}(x)|^{2}|\eta_{R}(x)-\eta_{R}(y)|^{2}}{|x-y|^{N+2s}} dx dy \nonumber \\
&\leq C \int_{\R^{N}\setminus B_{KR}} \int_{B_{2R}} \frac{|u_{n}(x)|^{2}}{|x|^{N+2s}}\, dxdy+ \frac{C}{R^{2}} \int_{B_{KR}\setminus B_{2R}} \int_{B_{2R}} \frac{|u_{n}(x)|^{2}}{|x-y|^{N+2(s-1)}}\, dxdy \nonumber \\
&\leq CR^{N} \int_{\R^{N}\setminus B_{KR}} \frac{|u_{n}(x)|^{2}}{|x|^{N+2s}}\, dx + \frac{C}{R^{2}} (KR)^{2(1-s)} \int_{B_{KR}\setminus B_{2R}} |u_{n}(x)|^{2} dx \nonumber \\
&\leq CR^{N} \left( \int_{\R^{N}\setminus B_{KR}} |u_{n}(x)|^{2^{*}_{s}} dx \right)^{\frac{2}{2^{*}_{s}}} \left(\int_{\R^{N}\setminus B_{KR}}\frac{1}{|x|^{\frac{N^{2}}{2s} +N}}\, dx \right)^{\frac{2s}{N}} + \frac{C K^{2(1-s)}}{R^{2s}} \int_{B_{KR}\setminus B_{2R}} |u_{n}(x)|^{2} dx \nonumber \\
&\leq \frac{C}{K^{N}} \left( \int_{\R^{N}\setminus B_{KR}} |u_{n}(x)|^{2^{*}_{s}} dx \right)^{\frac{2}{2^{*}_{s}}} + \frac{C K^{2(1-s)}}{R^{2s}} \int_{B_{KR}\setminus B_{2R}} |u_{n}(x)|^{2} dx \nonumber \\
&\leq \frac{C}{K^{N}}+ \frac{C K^{2(1-s)}}{R^{2s}} \int_{B_{KR}\setminus B_{2R}} |u_{n}(x)|^{2} dx,
\end{align}
for some constant $C>0$ independent of $n$.
Take $\e\in (0,1)$ and we have
\begin{align}\label{Ter1}
&\iint_{X^{3}_{R}} \frac{|u_{n}(x)|^{2} |\eta_{R}(x)- \eta_{R}(y)|^{2}}{|x-y|^{N+2s}}\, dxdy \nonumber\\
&\leq \int_{B_{2R}\setminus B_{\varepsilon R}} \int_{\R^{N}} \frac{|u_{n}(x)|^{2} |\eta_{R}(x)- \eta_{R}(y)|^{2}}{|x-y|^{N+2s}}\, dxdy + \int_{B_{\varepsilon R}} \int_{\R^{N}} \frac{|u_{n}(x)|^{2} |\eta_{R}(x)- \eta_{R}(y)|^{2}}{|x-y|^{N+2s}}\, dxdy. 
\end{align}
Since
\begin{align*}
\int_{B_{2R}\setminus B_{\varepsilon R}} \int_{\R^{N} \cap \{y: |x-y|<R\}} \frac{|u_{n}(x)|^{2} |\eta_{R}(x)- \eta_{R}(y)|^{2}}{|x-y|^{N+2s}}\, dxdy \leq \frac{C}{R^{2s}} \int_{B_{2R}\setminus B_{\varepsilon R}} |u_{n}(x)|^{2} dx
\end{align*}
and 
\begin{align*}
\int_{B_{2R}\setminus B_{\varepsilon R}} \int_{\R^{N} \cap \{y: |x-y|\geq R\}} \frac{|u_{n}(x)|^{2} |\eta_{R}(x)- \eta_{R}(y)|^{2}}{|x-y|^{N+2s}}\, dxdy \leq \frac{C}{R^{2s}} \int_{B_{2R}\setminus B_{\varepsilon R}} |u_{n}(x)|^{2} dx,
\end{align*}
we get
\begin{align}\label{Ter2}
\int_{B_{2R}\setminus B_{\varepsilon R}} \int_{\R^{N}} \frac{|u_{n}(x)|^{2} |\eta_{R}(x)- \eta_{R}(y)|^{2}}{|x-y|^{N+2s}}\, dxdy \leq \frac{C}{R^{2s}} \int_{B_{2R}\setminus B_{\varepsilon R}} |u_{n}(x)|^{2} dx. 
\end{align}
On the other hand, from the definition of $\eta_{R}$, $\e\in (0,1)$, and $0\leq \eta_{R}\leq 1$ we obtain
\begin{align}\label{Ter3}
\int_{B_{\varepsilon R}} \int_{\R^{N}} \frac{|u_{n}(x)|^{2} |\eta_{R}(x)- \eta_{R}(y)|^{2}}{|x-y|^{N+2s}}\, dxdy &= \int_{B_{\varepsilon R}} \int_{\R^{N}\setminus B_{R}} \frac{|u_{n}(x)|^{2} |\eta_{R}(x)- \eta_{R}(y)|^{2}}{|x-y|^{N+2s}}\, dxdy\nonumber \\
&\leq C \int_{B_{\varepsilon R}} \int_{\R^{N}\setminus B_{R}} \frac{|u_{n}(x)|^{2}}{|x-y|^{N+2s}}\, dxdy\nonumber \\
&\leq C \int_{B_{\varepsilon R}} |u_{n}|^{2} dx \int_{(1-\e)R}^{\infty} \frac{1}{r^{1+2s}} dr\nonumber \\
&=\frac{C}{[(1-\e)R]^{2s}} \int_{B_{\varepsilon R}} |u_{n}|^{2} dx,
\end{align}
where we used the fact that if $(x, y) \in B_{\varepsilon R}\times (\R^{N} \setminus B_{R})$ then $|x-y|>(1-\e)R$. \\
Putting together \eqref{Ter1}, \eqref{Ter2} and \eqref{Ter3} we have
\begin{align}\label{Pa4}
\iint_{X^{3}_{R}} &\frac{|u_{n}(x)|^{2} |\eta_{R}(x)- \eta_{R}(y)|^{2}}{|x-y|^{N+2s}}\, dxdy \nonumber \\
&\leq \frac{C}{R^{2s}} \int_{B_{2R}\setminus B_{\varepsilon R}} |u_{n}(x)|^{p} dx + \frac{C}{[(1-\e)R]^{2s}} \int_{B_{\varepsilon R}} |u_{n}(x)|^{2} dx. 
\end{align}
In light of \eqref{Pa1}, \eqref{Pa2}, \eqref{Pa3} and \eqref{Pa4} we can infer 
\begin{align}\label{Pa5}
&\iint_{\R^{2N}} \frac{|u_{n}(x)|^{2} |\eta_{R}(x)- \eta_{R}(y)|^{2}}{|x-y|^{N+2s}}\, dxdy \nonumber \\
&\leq \frac{C}{K^{N}} + \frac{CK^{2(1-s)}}{R^{2s}} \int_{B_{KR}\setminus B_{2R}} |u_{n}(x)|^{2} dx + \frac{C}{R^{2s}} \int_{B_{2R}\setminus B_{\varepsilon R}} |u_{n}(x)|^{2} dx + \frac{C}{[(1-\e)R]^{2s}}\int_{B_{\varepsilon R}} |u_{n}(x)|^{2} dx. 
\end{align}
Since $(|u_{n}|)$ is bounded in $H^{s}(\R^{N}, \R)$, and using the compact Sobolev embedding $H^{s}(\R^{N}, \R)\subset L^{2}_{loc}(\R^{N}, \R)$ (see \cite{DPV}), we may assume that $|u_{n}|\rightarrow |u|$ in $L^{2}_{loc}(\R^{N}, \R)$ for some $u\in H^{s}(\R^{N}, \R)$. Then, taking the limit as $n\rightarrow \infty$ in \eqref{Pa5} we get
\begin{align*}
&\limsup_{n\rightarrow \infty} \iint_{\R^{2N}} \frac{|u_{n}(x)|^{2} |\eta_{R}(x)- \eta_{R}(y)|^{2}}{|x-y|^{N+2s}}\, dxdy\\
&\leq \frac{C}{K^{N}} + \frac{CK^{2(1-s)}}{R^{2s}} \int_{B_{KR}\setminus B_{2R}} |u(x)|^{2} dx + \frac{C}{R^{2s}} \int_{B_{2R}\setminus B_{\varepsilon R}} |u(x)|^{2} dx + \frac{C}{[(1-\e)R]^{2s}}\int_{B_{\varepsilon R}} |u(x)|^{2} dx \\
&\leq \frac{C}{K^{N}} + CK^{2} \left( \int_{B_{KR}\setminus B_{2R}} |u(x)|^{2^{*}_{s}} dx\right)^{\frac{2}{2^{*}_{s}}} + C\left(\int_{B_{2R}\setminus B_{\varepsilon R}} |u(x)|^{2^{*}_{s}} dx\right)^{\frac{2}{2^{*}_{s}}} + C\left( \frac{\e}{1-\e}\right)^{2s} \left(\int_{B_{\varepsilon R}} |u(x)|^{2^{*}_{s}} dx\right)^{\frac{2}{2^{*}_{s}}}, 
\end{align*}
where in the last passage we used the H\"older inequality. 
From $u\in L^{2^{*}_{s}}(\R^{N}, \R)$, $K>4$ and $\e \in (0,1)$ it follows that
\begin{align*}
\limsup_{R\rightarrow \infty} \int_{B_{KR}\setminus B_{2R}} |u(x)|^{2^{*}_{s}} dx = \limsup_{R\rightarrow \infty} \int_{B_{2R}\setminus B_{\varepsilon R}} |u(x)|^{2^{*}_{s}} dx = 0, 
\end{align*}
and taking $\e= \frac{1}{K}$ we get
\begin{align*}
&\limsup_{R\rightarrow \infty} \limsup_{n\rightarrow \infty} \iint_{\R^{2N}} \frac{|u_{n}(x)|^{2} |\eta_{R}(x)- \eta_{R}(y)|^{2}}{|x-y|^{N+2s}}\, dxdy\\
&\leq \lim_{K\rightarrow \infty} \limsup_{R\rightarrow \infty} \Bigl[\, \frac{C}{K^{N}} + CK^{2} \left( \int_{B_{KR}\setminus B_{2R}} |u(x)|^{2^{*}_{s}} dx\right)^{\frac{2}{2^{*}_{s}}} + C\left(\int_{B_{2R}\setminus B_{\frac{1}{K} R}} |u(x)|^{2^{*}_{s}} dx\right)^{\frac{2}{2^{*}_{s}}} \\
&\quad + C\left(\frac{1}{K-1}\right)^{2s} \left(\int_{B_{\frac{1}{K} R}} |u(x)|^{2^{*}_{s}} dx\right)^{\frac{2}{2^{*}_{s}}}\, \Bigr]\\
&\leq \lim_{k\rightarrow \infty} \frac{C}{K^{N}} + C\left(\frac{1}{K-1}\right)^{2s} \left(\int_{\R^{N}} |u(x)|^{2^{*}_{s}} dx \right)^{\frac{2}{2^{*}_{s}}}= 0.
\end{align*}
In conclusion, we proved that \eqref{PS3} is verified.  
Then, putting together \eqref{PS1}, \eqref{PS2} and \eqref{PS3} we obtain that
$$
\limsup_{R\rightarrow \infty}\limsup_{n\rightarrow \infty} \int_{\R^{N}\setminus B_{R}} \int_{\R^{N}}  \frac{|u_{n}(x)-u_{n}(y)e^{\imath A_{\e}(\frac{x+y}{2})\cdot (x-y)}|^{2}}{|x-y|^{N+2s}} dx dy+\left(1-\frac{1}{k}\right) \int_{\R^{N}\setminus B_{R}} V_{\e}(x)|u_{n}|^{2}\, dx=0
$$
which implies that \eqref{T} holds true.
\end{proof}

\noindent
Taking into account Lemma \ref{MPG}, we can define the mountain pass level
$$
c_{\e}=\inf_{\gamma\in \Gamma_{\e}} \max_{t\in [0, 1]} J_{\e}(\gamma(t))
$$
where
$$
\Gamma_{\e}=\{\gamma\in C([0, 1], \h): \gamma(0)=0 \mbox{ and } J_{\e}(\gamma(1))<0\}.
$$
Applying the mountain pass theorem \cite{AR}, we can see that there exists $u_{\e}\in \h\setminus\{0\}$ such that $J_{\e}(u_{\e})=c_{\e}$ and $J'_{\e}(u_{\e})=0$. 
Let us now introduce the Nehari manifold associated with (\ref{Pe}), namely
\begin{equation*}
\mathcal{N}_{\e}:= \{u\in \h \setminus \{0\} : \langle J_{\e}'(u), u \rangle =0\}.
\end{equation*}
It is standard to verify that $c_{\e}$ can be characterized as follows:
$$
c_{\e}=\inf_{u\in \h\setminus\{0\}} \sup_{t\geq 0} J_{\e}(t u)=\inf_{u\in \N_{\e}} J_{\e}(u);
$$
see \cite{W} for more details. 
Similarly, one can prove that $I_{0}$ has a mountain pass geometry, and denoting by $\mathcal{N}_{0}$ the Nehari manifold associated with \eqref{APe}, we obtain that  $c_{0}:=\inf_{\mathcal{N}_{0}} I_{0}$ coincides with the mountain pass level of $I_{0}$.
Next, we prove a very interesting relation between $c_{\e}$ and $c_{0}$. 

\begin{lem}\label{AMlem1}
The numbers $c_{\e}$ and $c_{0}$ satisfy the following inequality
$$
\limsup_{\e\rightarrow 0} c_{\e}\leq c_{0}.
$$
\end{lem}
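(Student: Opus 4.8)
The plan is to bound $c_\e$ from above via the minimax characterization $c_\e=\inf_{u\in\h\setminus\{0\}}\sup_{t\ge0}J_\e(tu)$: for every $\delta>0$ I will exhibit a single fixed test function $\psi\in\h$ with $\sup_{t\ge0}J_\e(t\psi)\le c_0+\delta+o_\e(1)$, and then let $\e\to0$ and $\delta\to0$. Concretely, fix $\delta>0$; using the analogous Nehari/mountain-pass characterization $c_0=\inf_{v\in H^s(\R^N,\R)\setminus\{0\}}\sup_{t\ge0}I_0(tv)$ (valid for $I_0$ as recalled above) together with the density of $C^\infty_c(\R^N,\R)$ in $H^s(\R^N,\R)$, I would pick a compactly supported $w\in H^s(\R^N,\R)$, $w\not\equiv0$, with $\sup_{t\ge0}I_0(tw)<c_0+\delta$; the reduction to compactly supported $w$ is legitimate because, by $(f_3)$, $I_0(tv)\to-\infty$ as $t\to\infty$ uniformly for $v$ in a bounded subset of $H^s(\R^N,\R)$, so the fibering maxima stay in a fixed bounded interval and $v\mapsto\sup_{t\ge0}I_0(tv)$ is continuous along the approximating sequence. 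Since $A_\e(0)=A(0)$, Lemma~\ref{aux} gives $\psi:=e^{\imath A(0)\cdot x}w\in H^s_{A_\e}(\R^N,\C)$, and because $w$ has compact support, $\psi\in\h$; note that $\psi$ itself is independent of $\e$, only the norm $\|\cdot\|_\e$ depends on $\e$.

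Next I would prove $\|\psi\|_\e^2\to\|w\|_0^2$ as $\e\to0$. The potential part is immediate: $\int_{\R^N}V(\e x)w^2\,dx\to V_0\|w\|_{L^2}^2$ by dominated convergence, as $V$ is continuous and $w$ is bounded with compact support. For the magnetic Gagliardo part I would factor out the phase at the origin,
\[
\bigl|\psi(x)-\psi(y)e^{\imath A_\e(\frac{x+y}{2})\cdot(x-y)}\bigr|=\bigl|w(x)-w(y)e^{\imath(A(\e\frac{x+y}{2})-A(0))\cdot(x-y)}\bigr|,
\]
which converges pointwise a.e.\ to $|w(x)-w(y)|$ as $\e\to0$ by continuity of $A$. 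To pass to the limit in $[\psi]^2_{A_\e}$ I would split $\R^{2N}$ into $\{|x-y|>1\}$, where the integrand is bounded by the integrable kernel $2(|w(x)|^2+|w(y)|^2)|x-y|^{-N-2s}$, and $\{|x-y|\le1\}$, where the integrand vanishes unless $x,y$ both lie in a fixed compact neighbourhood of $\supp w$; on that set $|A(\e\frac{x+y}{2})-A(0)|$ is bounded uniformly for $\e\le1$, so the integrand is dominated by $C\bigl(|w(x)-w(y)|^2+|w(y)|^2|x-y|^2\bigr)|x-y|^{-N-2s}$, which is integrable near the diagonal precisely because $s<1$. Dominated convergence then yields $[\psi]^2_{A_\e}\to[w]^2_s$, hence $\|\psi\|_\e^2=\|w\|_0^2+\delta_\e$ with $\delta_\e\to0$.

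Finally, since $\supp w$ is compact and $0\in\Lambda$ is open, $\e\,\supp w\subset\Lambda$ for $\e$ small, i.e.\ $\supp w\subset\Lambda_\e$, so $g_\e(x,\cdot)=f(\cdot)$ and $G_\e(x,\cdot)=F(\cdot)$ on $\supp w$, whence
\[
J_\e(t\psi)=\frac{t^2}{2}\|\psi\|_\e^2-\frac12\int_{\R^N}F(t^2w^2)\,dx=I_0(tw)+\frac{t^2}{2}\delta_\e,\qquad t\ge0.
\]
From $(f_3)$ one gets $F(\tau)\ge c\,\tau^{\theta/2}$ for large $\tau$, hence $\int_{\R^N}F(t^2w^2)\,dx\ge c_1t^\theta$ for $t\ge1$; combined with the boundedness of $\|\psi\|_\e^2$ for small $\e$ and $\theta>2$, there is $T>0$, independent of small $\e$, with $\sup_{t\ge0}J_\e(t\psi)=\max_{t\in[0,T]}J_\e(t\psi)$. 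Therefore
\[
c_\e\le\sup_{t\ge0}J_\e(t\psi)\le\sup_{t\ge0}I_0(tw)+\frac{T^2}{2}|\delta_\e|<c_0+\delta+\frac{T^2}{2}|\delta_\e|,
\]
and letting first $\e\to0$ and then $\delta\to0$ gives $\limsup_{\e\to0}c_\e\le c_0$.

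The routine ingredients are the density reduction, the elementary growth estimate coming from $(f_3)$, and the identity for $J_\e(t\psi)$ (which relies only on $\supp w\subset\Lambda_\e$ for small $\e$). The main obstacle, and the only place where the magnetic operator genuinely intervenes, is the convergence $[\psi]^2_{A_\e}\to[w]^2_s$ in Step~2 — specifically the construction of an $\e$-uniform integrable dominant near the diagonal after extracting the phase $e^{\imath A(0)\cdot x}$, where the hypotheses $s<1$, $A\in C^{0,\alpha}$, and the compact support of $w$ all enter.
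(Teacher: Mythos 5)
Your proposal is correct, but it follows a genuinely different route from the paper's. The paper tests $J_{\e}$ with $w_{\e}(x)=\eta(\e x)w(x)e^{\imath A(0)\cdot x}$, where $w$ is a ground state of the autonomous problem \eqref{APe} and $\eta$ is a cut-off supported in $\Lambda$; since the truncation lives at scale $1/\e$, the convergence $[w_{\e}]^{2}_{A_{\e}}\rightarrow[w]^{2}$ there requires the power decay of $w$, the H\"older continuity of $A$ (via the splitting at $|x-y|=\e^{-\beta}$ with $\beta<\alpha/(1+\alpha-s)$) and Lemma 5 of \cite{PP}, after which one shows that the fibre maximizer $t_{\e}\rightarrow 1$ and obtains $J_{\e}(t_{\e}w_{\e})\rightarrow c_{0}$ exactly. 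You instead use the characterization $c_{\e}=\inf_{u\neq 0}\sup_{t\geq 0}J_{\e}(tu)$ with an $\e$-independent, compactly supported test function: density of compactly supported functions plus continuity of the fibering maximum along a strongly convergent sequence give $w$ with $\sup_{t\geq0}I_{0}(tw)<c_{0}+\delta$, and then only the local boundedness and continuity of $A$, together with $s<1$ and the compact support of $w$, are needed in the dominated-convergence proof that $\|e^{\imath A(0)\cdot x}w\|^{2}_{\e}\rightarrow\|w\|^{2}_{0}$; this avoids invoking the existence and decay of a ground state and the H\"older hypothesis on $A$ at this stage, at the price of only reaching $c_{0}+\delta$ (which suffices for the limsup) and of the routine fibering-continuity step. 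One small caveat in that step: your assertion that $I_{0}(tv)\rightarrow-\infty$ as $t\rightarrow\infty$ \emph{uniformly on bounded subsets} of $H^{s}(\R^{N},\R)$ is false in general (the $L^{\theta}$-norm may vanish along a bounded sequence, letting the fibre maxima escape), but it does hold along your strongly convergent approximating sequence with nonzero limit, since the $L^{\theta}$-norms converge to a positive value, and that is all your argument actually uses. In exchange, the paper's computation yields the sharper information that an explicit family of test functions concentrated in $\Lambda_{\e}$ attains energies converging exactly to $c_{0}$, whereas your argument delivers precisely the stated inequality.
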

\begin{proof}
Let $w\in H^{s}(\R^{N}, \R)$ be a positive ground state to the autonomous problem \eqref{APe}, so that $I'_{0}(w)=0$ and $I_{0}(w)=c_{0}$, and let $\eta\in C^{\infty}_{c}(\R^{N}, [0,1])$ be a cut-off function such that $\eta=1$ in $B_{\frac{\delta}{2}}$ and $\supp(\eta)\subset B_{\delta}\subset \Lambda$ for some $\delta>0$. 
We recall that the existence of $w$ is guaranteed in view of the results in \cite{Aade, FQT, FigS}. Moreover, from \cite{FQT}, we know that $w\in C^{0, \gamma}(\R^{N}, \R)$, for some $\gamma>0$, and 
\begin{equation}\label{remdecay}
0<w(x)\leq \frac{C}{|x|^{N+2s}} \mbox{ for all } |x|>1.
\end{equation}

Let us define $w_{\e}(x):=\eta_{\e}(x)w(x) e^{\imath A(0)\cdot x}$, with $\eta_{\e}(x)=\eta(\e x)$ for $\e>0$, and we observe that $|w_{\e}|=\eta_{\e}w$ and $w_{\e}\in \h$ in light of Lemma \ref{aux}. Let us prove that
\begin{equation}\label{limwr}
\lim_{\e\rightarrow 0}\|w_{\e}\|^{2}_{\e}=\|w\|_{0}^{2}\in(0, \infty).
\end{equation}
Since it is clear that $\int_{\R^{N}} V_{\e}(x)|w_{\e}|^{2}dx\rightarrow \int_{\R^{N}} V_{0} |w|^{2}dx$, it remains to show that
\begin{equation}\label{limwr*}
\lim_{\e\rightarrow 0}[w_{\e}]^{2}_{A_{\e}}=[w]^{2}.
\end{equation}
Using Lemma $5$ in \cite{PP}, we know that 
\begin{equation}\label{PPlem}
[\eta_{\e} w]\rightarrow [w] \mbox{ as } \e\rightarrow 0.
\end{equation}
On the other hand
\begin{align*}
[w_{\e}]_{A_{\e}}^{2}
&=\iint_{\R^{2N}} \frac{|e^{\imath A(0)\cdot x}\eta_{\e}(x)w(x)-e^{\imath A_{\e}(\frac{x+y}{2})\cdot (x-y)}e^{\imath A(0)\cdot y} \eta_{\e}(y)w(y)|^{2}}{|x-y|^{N+2s}} dx dy \nonumber \\
&=[\eta_{\e} w]^{2}
+\iint_{\R^{2N}} \frac{\eta_{\e}^2(y)w^2(y) |e^{\imath [A_{\e}(\frac{x+y}{2})-A(0)]\cdot (x-y)}-1|^{2}}{|x-y|^{N+2s}} dx dy\\
&\quad+2\Re \iint_{\R^{2N}} \frac{(\eta_{\e}(x)w(x)-\eta_{\e}(y)w(y))\eta_{\e}(y)w(y)(1-e^{-\imath [A_{\e}(\frac{x+y}{2})-A(0)]\cdot (x-y)})}{|x-y|^{N+2s}} dx dy \\
&=: [\eta_{\e} w]^{2}+X_{\e}+2Y_{\e}.
\end{align*}
Then, in view of 
$|Y_{\e}|\leq [\eta_{\e} w] \sqrt{X_{\e}}$ and \eqref{PPlem}, it is suffices to prove that $X_{\e}\rightarrow 0$ as $\e\rightarrow 0$ to deduce that \eqref{limwr*} holds.
Let us note that for $0<\beta<\alpha/({1+\alpha-s})$, 
\begin{equation}\label{Ye}
\begin{split}
X_{\e}
&\leq \int_{\R^{N}} w^{2}(y) dy \int_{|x-y|\geq\e^{-\beta}} \frac{|e^{\imath [A_{\e}(\frac{x+y}{2})-A(0)]\cdot (x-y)}-1|^{2}}{|x-y|^{N+2s}} dx\\
&\quad +\int_{\R^{N}} w^{2}(y) dy  \int_{|x-y|<\e^{-\beta}} \frac{|e^{\imath [A_{\e}(\frac{x+y}{2})-A(0)]\cdot (x-y)}-1|^{2}}{|x-y|^{N+2s}} dx\\
&=:X^{1}_{\e}+X^{2}_{\e}.
\end{split}
\end{equation}
Using $|e^{\imath t}-1|^{2}\leq 4$ and $w\in H^{s}(\R^{N}, \R)$, we get
\begin{equation}\label{Ye1}
X_{\e}^{1}\leq C \int_{\R^{N}} w^{2}(y) dy \int_{\e^{-\beta}}^\infty \rho^{-1-2s} d\rho\leq C \e^{2\beta s} \rightarrow 0.
\end{equation}
Since $|e^{\imath t}-1|^{2}\leq t^{2}$ for all $t\in \R$, $A\in C^{0,\alpha}(\R^N,\R^N)$ with $\alpha\in(0,1]$, and $|x+y|^{2}\leq 2(|x-y|^{2}+4|y|^{2})$, we have
\begin{equation}\label{Ye2}
\begin{split}
X^{2}_{\e}&
	\leq \int_{\R^{N}} w^{2}(y) dy  \int_{|x-y|<\e^{-\beta}} \frac{|A_{\e}\left(\frac{x+y}{2}\right)-A(0)|^{2} }{|x-y|^{N+2s-2}} dx \\
	&\leq C\e^{2\alpha} \int_{\R^{N}} w^{2}(y) dy  \int_{|x-y|<\e^{-\beta}} \frac{|x+y|^{2\alpha} }{|x-y|^{N+2s-2}} dx \\
	&\leq C\e^{2\alpha} \left(\int_{\R^{N}} w^{2}(y) dy  \int_{|x-y|<\e^{-\beta}} \frac{1 }{|x-y|^{N+2s-2-2\alpha}} dx\right.\\
	&\qquad \qquad+ \left. \int_{\R^{N}} |y|^{2\alpha} w^{2}(y) dy  \int_{|x-y|<\e^{-\beta}} \frac{1}{|x-y|^{N+2s-2}} dx\right) \\
	&=: C\e^{2\alpha} (X^{2, 1}_{\e}+X^{2, 2}_{\e}).
	\end{split}
	\end{equation}	
	Then
	\begin{equation}\label{Ye21}
	X^{2, 1}_{\e}
	= C  \int_{\R^{N}} w^{2}(y) dy \int_0^{\e^{-\beta}} \rho^{1+2\alpha-2s} d\rho
	\leq C\e^{-2\beta(1+\alpha-s)}.
	\end{equation}
	On the other hand, using \eqref{remdecay}, we infer that
	\begin{equation}\label{Ye22}
	\begin{split}
	 X^{2, 2}_{\e}
	 &\leq C  \int_{\R^{N}} |y|^{2\alpha} w^{2}(y) dy \int_0^{\e^{-\beta}}\rho^{1-2s} d\rho  \\
	&\leq C \e^{-2\beta(1-s)} \left[\int_{B_1}  w^{2}(y) dy + \int_{\R^{N}\setminus B_{1}} \frac{1}{|y|^{2(N+2s)-2\alpha}} dy \right]  \\
	&\leq C \e^{-2\beta(1-s)}.
	\end{split}
	\end{equation}
	Taking into account \eqref{Ye}, \eqref{Ye1}, \eqref{Ye2}, \eqref{Ye21} and \eqref{Ye22} we can conclude that $X_{\e}\rightarrow 0$. Therefore \eqref{limwr} holds.
Now, let $t_{\e}>0$ be the unique number such that 
\begin{equation*}
J_{\e}(t_{\e} w_{\e})=\max_{t\geq 0} J_{\e}(t w_{\e}).
\end{equation*}
Then $t_{\e}$ verifies 
\begin{equation}\label{AS1}
\|w_{\e}\|_{\e}^{2}=\int_{\R^{N}} g_{\e}(x, t_{\e}^{2} |w_{\e}|^{2}) |w_{\e}|^{2}dx=\int_{\R^{N}} f(t_{\e}^{2} |w_{\e}|^{2}) |w_{\e}|^{2}dx
\end{equation}
where we used $\supp(\eta)\subset \Lambda$ and $g=f$ on $\Lambda$.
Let us prove that $t_{\e}\rightarrow 1$ as $\e\rightarrow 0$. Using $\eta=1$ in $B_{\frac{\delta}{2}}$ and that $w$ is a continuous positive function, we can see that $(f_4)$ yields
$$
\|w_{\e}\|_{\e}^{2}\geq f(t_{\e}^{2}\alpha^{2}_{0})\int_{B_{\frac{\delta}{2}}}|w|^{2}dx, 
$$
where $\alpha_{0}=\min_{\bar{B}_{\delta/2}} w>0$. So, if $t_{\e}\rightarrow \infty$ as $\e\rightarrow 0$, then we can use $(f_3)$ and \eqref{limwr} to deduce that $\|w\|_{0}^{2}= \infty$, which gives a contradiction.
On the other hand, if $t_{\e}\rightarrow 0$ as $\e\rightarrow 0$, we can use the growth assumptions on $f$ and \eqref{limwr} to infer that $\|w\|_{0}^{2}= 0$ which is impossible.
In conclusion, $t_{\e}\rightarrow t_{0}\in (0, \infty)$ as $\e\rightarrow 0$.
Now, taking the limit as $\e\rightarrow 0$ in \eqref{AS1} and using \eqref{limwr}, we can see that 
\begin{equation}\label{AS2}
\|w\|_{0}^{2}=\int_{\R^{N}} f(t_{0}^{2} |w|^{2}) |w|^{2}dx.
\end{equation}
From $w\in \mathcal{N}_{0}$ and $(f_4)$, it follows that $t_{0}=1$. Then, using \eqref{limwr}, $t_{\e}\rightarrow 1$ and applying the Dominated Convergence Theorem, we obtain that $\lim_{\e\rightarrow 0} J_{\e}(t_{\e} w_{\e})=I_{0}(w)=c_{0}$.
Since $c_{\e}\leq \max_{t\geq 0} J_{\e}(t w_{\e})=J_{\e}(t_{\e} w_{\e})$, we can conclude  that
$\limsup_{\e\rightarrow 0} c_{\e}\leq c_{0}$.
\end{proof}

\noindent
Let us recall the following result for the autonomous problem \eqref{APe} (see \cite{A1, FigS}).
\begin{lem}\label{FS}
Let $(u_{n})\subset \mathcal{N}_{0}$ be a sequence satisfying $I_{0}(u_{n})\rightarrow c_{0}$. Then, up to subsequences, one of the following alternatives holds:
\begin{compactenum}[(i)]
\item $(u_{n})$ strongly converges in $H^{s}(\R^{N}, \R)$, 
\item there exists a sequence $(\tilde{y}_{n})\subset \R^{N}$ such that,  up to a subsequence, $v_{n}(x)=u_{n}(x+\tilde{y}_{n})$ strongly converges in $H^{s}(\R^{N}, \R)$.
\end{compactenum}
\end{lem}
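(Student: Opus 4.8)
The plan is to deduce the dichotomy by combining a Lions--type concentration argument with a Brezis--Lieb splitting of the energy along the (possibly translated) sequence. First I would check that $(u_n)$ is bounded in $H^{s}(\R^{N},\R)$: since $u_n\in\mathcal{N}_{0}$ one has $\langle I_0'(u_n),u_n\rangle=0$, so by $(f_3)$
\[
c_0+o_n(1)=I_0(u_n)-\frac12\langle I_0'(u_n),u_n\rangle=\frac12\int_{\R^{N}}\big(f(u_n^{2})u_n^{2}-F(u_n^{2})\big)\,dx\geq\frac12\Big(1-\frac{2}{\theta}\Big)\|u_n\|_{0}^{2},
\]
and $\theta>2$ yields the bound; the usual mountain pass argument (via $(f_1)$--$(f_2)$ and Theorem \ref{Sembedding}) also gives $c_0>0$. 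Next, by a standard argument --- Ekeland's variational principle applied within the Nehari--manifold framework, which works for merely continuous $f$ (see \cite{W}) --- I may assume, after replacing $(u_n)$ by a sequence at vanishing $H^{s}$--distance still denoted $(u_n)\subset\mathcal{N}_{0}$ with $I_0(u_n)\to c_0$, that in addition $I_0'(u_n)\to0$ in $(H^{s}(\R^{N},\R))'$. Since $\|\cdot\|_{0}$ is translation invariant and the conclusion concerns only subconvergence up to the same translations, it suffices to prove the statement for this new sequence.

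Up to a subsequence $u_n\rightharpoonup u$ in $H^{s}(\R^{N},\R)$, $u_n\to u$ in $L^{r}_{loc}(\R^{N},\R)$ for every $r\in[1,2^{*}_{s})$ and a.e.\ in $\R^{N}$. I would then apply the (fractional) Lions vanishing lemma: if $\displaystyle\lim_{n\to\infty}\sup_{y\in\R^{N}}\int_{B_1(y)}|u_n|^{2}\,dx=0$, then $u_n\to0$ in $L^{r}(\R^{N},\R)$ for every $r\in(2,2^{*}_{s})$, so $(f_1)$--$(f_2)$ force $\int_{\R^{N}}F(u_n^{2})\,dx\to0$ and $\int_{\R^{N}}f(u_n^{2})u_n^{2}\,dx\to0$; together with the Nehari identity $\|u_n\|_{0}^{2}=\int_{\R^{N}}f(u_n^{2})u_n^{2}\,dx$ this gives $I_0(u_n)\to0$, contradicting $c_0>0$. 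Hence vanishing is excluded and there are $\beta>0$ and $(\tilde y_n)\subset\R^{N}$ with $\liminf_{n}\int_{B_1(\tilde y_n)}|u_n|^{2}\,dx\geq\beta$. If the weak limit $u$ is nontrivial I set $v_n:=u_n$ and $v:=u$; otherwise $(\tilde y_n)$ must be unbounded (else local strong convergence would force the previous integral to vanish), and, using the translation invariance of $I_0$, I set $v_n(x):=u_n(x+\tilde y_n)$, which is again bounded, belongs to $\mathcal{N}_{0}$, satisfies $I_0(v_n)\to c_0$ and $I_0'(v_n)\to0$, and, along a further subsequence, $v_n\rightharpoonup v$ with $\int_{B_1(0)}|v|^{2}\,dx\geq\beta$, so $v\neq0$. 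This distinction is exactly what produces alternative $(i)$ (when $(\tilde y_n)$ is bounded, so one may take $\tilde y_n\equiv0$) versus alternative $(ii)$.

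By the weak convergence $v_n\rightharpoonup v$, the local strong convergence, and the subcritical growth of $f$, a standard limiting argument in $\langle I_0'(v_n),\varphi\rangle=o_n(1)$, $\varphi\in C^{\infty}_{c}(\R^{N},\R)$, yields $I_0'(v)=0$; hence $v\in\mathcal{N}_{0}$ and $I_0(v)\geq c_0$. On the other hand $f(t)t-F(t)\geq0$ by $(f_3)$, so Fatou's lemma gives
\[
I_0(v)=\frac12\int_{\R^{N}}\big(f(v^{2})v^{2}-F(v^{2})\big)\,dx\leq\liminf_{n\to\infty}\frac12\int_{\R^{N}}\big(f(v_n^{2})v_n^{2}-F(v_n^{2})\big)\,dx=\liminf_{n\to\infty}I_0(v_n)=c_0,
\]
so $I_0(v)=c_0$. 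To upgrade the weak convergence to strong convergence, set $w_n:=v_n-v\rightharpoonup0$. The Brezis--Lieb lemma for the quadratic form $\|\cdot\|_{0}^{2}$ and, thanks to the strict subcriticality in $(f_2)$, for the functionals $\phi\mapsto\int_{\R^{N}}F(\phi^{2})\,dx$ and $\phi\mapsto\int_{\R^{N}}f(\phi^{2})\phi^{2}\,dx$, gives
\[
I_0(v_n)=I_0(w_n)+I_0(v)+o_n(1),\qquad \langle I_0'(v_n),v_n\rangle=\langle I_0'(w_n),w_n\rangle+\langle I_0'(v),v\rangle+o_n(1),
\]
so $I_0(w_n)\to0$ and $\langle I_0'(w_n),w_n\rangle\to0$. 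Writing $\|w_n\|_{0}^{2}=2I_0(w_n)+\int_{\R^{N}}F(w_n^{2})\,dx$, using $F(t)\leq\frac{2}{\theta}tf(t)$ (from $(f_3)$) and $\int_{\R^{N}}f(w_n^{2})w_n^{2}\,dx=\|w_n\|_{0}^{2}+o_n(1)$, I obtain $\|w_n\|_{0}^{2}\leq\frac{2}{\theta}\|w_n\|_{0}^{2}+o_n(1)$, and $\theta>2$ forces $w_n\to0$ in $H^{s}(\R^{N},\R)$, i.e.\ $v_n\to v$ strongly; transferring back to the original sequence (at vanishing distance) gives alternative $(i)$ or $(ii)$ according to the case above.

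I expect the real obstacle to be this last step: the nonlinear functional is not weakly continuous and only local strong convergence is at hand, so the Brezis--Lieb splittings of $\int_{\R^{N}}F(v_n^{2})\,dx$ and $\int_{\R^{N}}f(v_n^{2})v_n^{2}\,dx$ must be justified carefully, and it is exactly here that the strict subcriticality of $f$ (the full force of $f(t)/t^{(q-2)/2}\to0$, not merely $q<2^{*}_{s}$) enters, to control the tails of these integrals. As an alternative one may bypass the splitting and argue directly with the fibering map $t\mapsto I_0(tv_n)$ and the uniqueness of the Nehari projection granted by $(f_4)$, but the analytic content is essentially the same.
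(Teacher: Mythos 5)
First, a point of reference: the paper does not prove this lemma at all — it is recalled from \cite{A1, FigS} — so your proposal has to be judged on its own. In outline it is exactly the standard argument behind those citations: boundedness from $(f_3)$ and the Nehari identity, upgrade of the minimizing sequence to a Palais--Smale sequence, exclusion of vanishing via the Lions-type lemma together with $c_0>0$, translation when the weak limit is trivial, identification of the translated weak limit $v\neq 0$ as a critical point with $I_0(v)=c_0$ via Fatou, and finally strong convergence. Your ending through Brezis--Lieb splittings of $\int F(v_n^2)\,dx$ and $\int f(v_n^2)v_n^2\,dx$ is workable (it does require the generalized Brezis--Lieb lemma for nonlinearities, valid under $(f_1)$--$(f_2)$, as you yourself flag), but it is heavier than the usual conclusion: from $v_n\rightharpoonup v$, applying weak lower semicontinuity to $\bigl(\tfrac12-\tfrac1\theta\bigr)\|\cdot\|_0^2$ and Fatou to the nonnegative integrand $f(t)t-\tfrac{\theta}{2}F(t)$ in the identity $I_0(v_n)-\tfrac1\theta\langle I_0'(v_n),v_n\rangle\to c_0$ yields $\|v_n\|_0\to\|v\|_0$ directly, and strong convergence follows in the Hilbert space without any nonlinear splitting.

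The one step that is a genuine gap as written is the reduction ``we may assume $I_0'(u_n)\to 0$ after replacing $(u_n)$ by a sequence at vanishing $H^{s}$-distance, see \cite{W}''. With $f$ merely continuous, $u\mapsto\langle I_0'(u),u\rangle$ is not $C^1$, so $\mathcal{N}_{0}$ need not be a $C^1$ manifold and the Ekeland-on-a-Nehari-manifold argument in Willem does not apply as cited; the correct tool is the generalized Nehari (Szulkin--Weth) framework, which uses precisely $(f_4)$ and the subcritical growth, and produces a Palais--Smale minimizing sequence by applying Ekeland to the reduced functional on the unit sphere. Even then, the ``vanishing $H^{s}$-distance'' between the new sequence $\hat u_n=t(\hat w_n)\hat w_n$ and the original $u_n=t(w_n)w_n$ is not automatic, because the Nehari projection $t(\cdot)$ is continuous but not uniformly continuous on bounded sets; and this matters, since the lemma concerns the \emph{given} sequence. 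The gap is repairable: Ekeland gives $\|\hat w_n-w_n\|_0\to 0$ on the sphere, and once $\hat u_n$ is shown to converge strongly up to translations $\tilde y_n$, so does $\hat w_n(\cdot+\tilde y_n)=\hat u_n(\cdot+\tilde y_n)/\|\hat u_n\|_0$, hence $w_n(\cdot+\tilde y_n)$ as well, and then $u_n(\cdot+\tilde y_n)=\|u_n\|_0\,w_n(\cdot+\tilde y_n)$ converges along a further subsequence because $\|u_n\|_0$ is bounded and bounded away from zero on $\mathcal{N}_{0}$. So your scheme closes, but the Palais--Smale upgrade must be justified through the Szulkin--Weth machinery (or by assuming $f\in C^1$, as in the quoted references), not by the citation you give, and the transfer back to $(u_n)$ needs the short argument above rather than the unproved ``vanishing distance'' claim.
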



\noindent
Now we prove the following useful compactness result.
\begin{lem}\label{prop3.3}
Let $\e_{n}\rightarrow 0$ and $u_{\e_{n}}\in H^{s}_{\e_{n}}$ be such that $J_{\e_{n}}(u_{\e_{n}})=c_{\e_{n}}$ and $J'_{\e_{n}}(u_{\e_{n}})=0$. Then there exists $(\tilde{y}_{\e_{n}})\subset \R^{N}$ such that $v_{n}(x)=|u_{\e_{n}}|(x+\tilde{y}_{\e_{n}})$ has a convergent subsequence in $H^{s}(\R^{N}, \R)$. Moreover, up to a subsequence, $y_{n}=\e_{n} \tilde{y}_{\e_{n}}\rightarrow y_{0}$ for some $y_{0}\in \Lambda$ such that $V(y_{0})=V_{0}$.
\end{lem}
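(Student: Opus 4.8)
The plan is to exploit the energy comparison $\limsup_{\e\to 0} c_\e \le c_0$ from Lemma \ref{AMlem1} together with the concentration-compactness dichotomy for the autonomous problem (Lemma \ref{FS}), transferred to the magnetic setting via the diamagnetic inequality (Lemma \ref{DI}). First I would check that $(v_n) := (|u_{\e_n}|)$ is bounded in $H^s(\R^N,\R)$: from $J'_{\e_n}(u_{\e_n})=0$ and $J_{\e_n}(u_{\e_n})=c_{\e_n}$, the standard $J_{\e_n}(u_{\e_n})-\frac1\theta\langle J'_{\e_n}(u_{\e_n}),u_{\e_n}\rangle$ computation used in Lemma \ref{PSc} gives $\|u_{\e_n}\|^2_{\e_n}\le C c_{\e_n}$, and since $\limsup c_{\e_n}\le c_0<\infty$ the norms are uniformly bounded; then $[|u_{\e_n}|]^2+\|\sqrt{V_0}\,|u_{\e_n}|\|_{L^2}^2\le \|u_{\e_n}\|_{\e_n}^2$ by Lemma \ref{DI} and $(V_1)$, so $(v_n)$ is bounded in $H^s(\R^N,\R)$. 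A non-vanishing (Lions-type) argument then rules out $v_n\to 0$ in $L^r$ for $r\in(2,2^*_s)$: if vanishing held, testing $\langle J'_{\e_n}(u_{\e_n}),u_{\e_n}\rangle=0$ and using $(g_1)$--$(g_2)$ together with the definition of $g$ (on $\Lambda_{\e_n}$ we still control $f$ by $\delta|u_{\e_n}|^2 + C_\delta|u_{\e_n}|^q$, and outside by $\frac{V_0}{k}$) would force $\|u_{\e_n}\|_{\e_n}\to 0$, contradicting the mountain-pass level $c_{\e_n}\ge\alpha>0$. Hence there exist $\beta>0$ and $(\tilde y_{\e_n})\subset\R^N$ with $\liminf_n \int_{B_1(\tilde y_{\e_n})}|u_{\e_n}|^2\,dx \ge \beta$.

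Set $v_n(x)=|u_{\e_n}|(x+\tilde y_{\e_n})$; it is still bounded in $H^s(\R^N,\R)$ and, up to a subsequence, $v_n\rightharpoonup v\not\equiv 0$ in $H^s(\R^N,\R)$, strongly in $L^r_{loc}$ and a.e. The next step is to show $y_n:=\e_n\tilde y_{\e_n}$ stays bounded and, along a subsequence, $y_n\to y_0\in\overline\Lambda$. This is the point where I use the energy inequality: I build a comparison with the autonomous functional $I_0$. Using the diamagnetic inequality and the pointwise bound $G_{\e_n}(x,t)\le F(t)$ on $\Lambda_{\e_n}$ and $G_{\e_n}(x,t)\le\frac{V_{\e_n}(x)}{2k}t$ off $\Lambda_{\e_n}$, one gets a lower bound for $J_{\e_n}(u_{\e_n})$ in terms of a functional evaluated at $|u_{\e_n}|$ with potential $\ge V_0$ and nonlinearity $\le F$; evaluating along the Nehari scaling and passing to the limit with Fatou shows $c_0\ge\limsup_n c_{\e_n}=\limsup_n J_{\e_n}(u_{\e_n})\ge I_0(v)\ge$ (value at the Nehari projection of $v$) $\ge c_0$, which forces all inequalities to be equalities, hence $v\in\mathcal N_0$, $I_0(v)=c_0$, and $v_n\to v$ strongly in $H^s(\R^N,\R)$ (the norm converges and weak convergence upgrades to strong). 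Along the way, if $y_n\to\infty$ or $\mathrm{dist}(y_n,\Lambda)\not\to 0$, then $V_{\e_n}(x+\tilde y_{\e_n})\to$ some limit $\ge V_0$ with strict inequality on a set of positive measure in the worst case, or more simply the penalized nonlinearity is active ($g=\tilde f\le f$ with the cutoff), both of which make the limiting energy strictly larger than $c_0$ unless $V(y_0)=V_0$; since $(V_2)$ gives $V_0=\inf_\Lambda V<\min_{\partial\Lambda}V$, the limit point $y_0$ must lie in the interior of $\Lambda$ and satisfy $V(y_0)=V_0$.

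More precisely, for the location of $y_0$ I would argue by contradiction. Suppose $y_n\to y_0$ with $V(y_0)>V_0$ (including $y_0\in\partial\Lambda$, where $(V_2)$ forces $V(y_0)>V_0$, or the case $|y_n|\to\infty$ handled by passing to $\liminf V$). Let $t_n>0$ be the Nehari scaling with $t_n v_n\in\mathcal N_0$; from strong convergence and $v\not\equiv0$ one has $t_n\to t_0\in(0,\infty)$. Then
\begin{align*}
c_0 \ge \limsup_{n\to\infty} c_{\e_n}
&= \limsup_{n\to\infty}\Big[\tfrac12\|u_{\e_n}\|_{\e_n}^2-\tfrac12\int_{\R^N}G_{\e_n}(x,|u_{\e_n}|^2)\,dx\Big]\\
&\ge \limsup_{n\to\infty}\Big[\tfrac12[v_n]^2+\tfrac12\int_{\R^N}V_{\e_n}(x+\tilde y_{\e_n})v_n^2\,dx-\tfrac12\int_{\R^N}F(v_n^2)\,dx\Big],
\end{align*}
and since $V_{\e_n}(\cdot+\tilde y_{\e_n})\to V(y_0)$ a.e. with $V(y_0)>V_0$, Fatou gives a limiting energy strictly above the value of $I_0$ at $v$ with the true $V_0$, while that value is $\ge c_0$ by the Nehari characterization of $c_0$ — contradiction. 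Hence $V(y_0)=V_0$, and by $(V_2)$ this rules out $y_0\in\R^N\setminus\Lambda$ as well, so $y_0\in\Lambda$ with $V(y_0)=V_0$. I expect the main obstacle to be making the energy comparison completely rigorous in the magnetic setting — namely controlling the cross terms produced by $e^{\imath A_{\e_n}(\frac{x+y}{2})\cdot(x-y)}$ when one wants $J_{\e_n}(u_{\e_n})$ bounded below by an autonomous-type quantity in $|u_{\e_n}|$; the diamagnetic inequality of Lemma \ref{DI} handles the quadratic form, but keeping track of the penalized region $\R^N\setminus\Lambda_{\e_n}$ and of the translation $\tilde y_{\e_n}$ simultaneously requires care, and the non-vanishing step must be set up so that the $L^2_{loc}$ mass is captured precisely at $\tilde y_{\e_n}$.
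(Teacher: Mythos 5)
Your overall architecture (uniform bound via $J_{\e_n}-\frac1\theta\langle J'_{\e_n},\cdot\rangle$, nonvanishing by a Lions-type argument plus the Nehari lower bound $\|u_{\e_n}\|^2_{\e_n}\geq\alpha_0$, translation by $\tilde y_{\e_n}$, boundedness of $y_n$ through the penalization $\tilde f\leq V_0/k$, and the final Fatou contradiction under $V(y_0)>V_0$ combined with $(V_2)$) matches the paper. However, the central step --- strong convergence of $v_n$ in $H^{s}(\R^{N},\R)$, which is part of the conclusion of the lemma --- is not established by your energy squeeze. Two links of the chain $c_0\geq\limsup_n J_{\e_n}(u_{\e_n})\geq I_0(v)\geq I_0(\text{Nehari projection of }v)\geq c_0$ are unjustified or reversed. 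First, $\limsup_n J_{\e_n}(u_{\e_n})\geq I_0(v)$ does not follow: the diamagnetic inequality together with $V_{\e}\geq V_0$ and $G_{\e}\leq F$ gives $J_{\e_n}(u_{\e_n})\geq I_0(v_n)$, but to pass to the weak limit you would need $\liminf_n\int_{\R^{N}}F(v_n^{2})\,dx\leq\int_{\R^{N}}F(v^{2})\,dx$, which is the \emph{reverse} of Fatou and fails precisely in the splitting scenario you are trying to exclude (if $v_n\approx v+w(\cdot-z_n)$ with $|z_n|\rightarrow\infty$, then $\int F(v_n^{2})\,dx\rightarrow\int F(v^{2})\,dx+\int F(w^{2})\,dx$); so the argument is circular. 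Second, $I_0(v)\geq I_0(t_v v)$ with $t_v v\in\mathcal{N}_0$ is backwards: under $(f_4)$ the Nehari projection \emph{maximizes} $t\mapsto I_0(tv)$ along the ray, so $I_0(t_v v)\geq I_0(v)$, and $I_0(v)\geq c_0$ is simply not available for an arbitrary weak limit.

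The paper closes this gap differently, and you need some version of its device: put $\tilde v_n=t_n v_n\in\mathcal{N}_0$ and compare at \emph{fixed} $n$, with no passage to the limit, namely
$c_0\leq I_0(\tilde v_n)\leq J_{\e_n}(t_n u_n)\leq\max_{t\geq0}J_{\e_n}(t u_n)=J_{\e_n}(u_n)=c_{\e_n}$,
where the second inequality uses Lemma \ref{DI}, $V_0\leq V_{\e_n}$ and $G\leq F$, and the last equality uses $u_n\in\mathcal{N}_{\e_n}$ and $(g_4)$. Lemma \ref{AMlem1} then gives $I_0(\tilde v_n)\rightarrow c_0$, and the compactness result for the autonomous problem (Lemma \ref{FS}) applied to $(\tilde v_n)\subset\mathcal{N}_0$, together with $\tilde v_n\rightharpoonup t^{*}v\not\equiv0$ (which forces $t_n\rightarrow t^{*}>0$ and selects the correct alternative), yields $\tilde v_n\rightarrow\tilde v$ strongly, hence $v_n\rightarrow v$ strongly. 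Note also that your argument for the boundedness of $y_n$ and for $y_0\in\overline{\Lambda}$ implicitly uses this strong convergence (it is what kills the far-field term $\int_{\R^{N}\setminus B_{R/\e_n}}f(v_n^{2})v_n^{2}\,dx$ in the Nehari identity), so it must come after the compactness step, as in the paper; once strong convergence is in hand, your Fatou argument for $V(y_0)=V_0$ and the use of $(V_2)$ to exclude $y_0\in\partial\Lambda$ are essentially the paper's.
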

\begin{proof}
Hereafter, we write $(\tilde{y}_{n})$ and $(u_{n})$ to denote the sequences $(\tilde{y}_{\e_{n}})$ and $(u_{\e_{n}})$, respectively.
Taking into account $\langle J'_{\e_{n}}(u_{n}), u_{n}\rangle=0$, $J_{\e_{n}}(u_{n})= c_{\e_{n}}$ and Lemma \ref{AMlem1} it is easy to see that $(u_{n})$ is bounded in $H^{s}_{\e_{n}}$. 
Then, there exists $C>0$ (independent of $n$) such that $\|u_{n}\|_{\e_{n}}\leq C$ for all $n\in \mathbb{N}$. Moreover, from Lemma \ref{DI}, we also know that $(|u_{n}|)$ is bounded in $H^{s}(\R^{N}, \R)$.\\
Now we prove that there exist a sequence $(\tilde{y}_{n})\subset \R^{N}$ and constants $R, \gamma>0$ such that
\begin{equation}\label{sacchi}
\liminf_{n\rightarrow \infty}\int_{B_{R}(\tilde{y}_{n})} |u_{n}|^{2} \, dx\geq \gamma>0.
\end{equation}
If by contradiction \eqref{sacchi} does not hold, then for all $R>0$ we get
$$
\lim_{n\rightarrow \infty}\sup_{y\in \R^{N}}\int_{B_{R}(y)} |u_{n}|^{2} \, dx=0.
$$
From the boundedness of $(|u_{n}|)$ and Lemma $2.2$ in \cite{FQT} we can see that $|u_{n}|\rightarrow 0$ in $L^{q}(\R^{N}, \R)$ for any $q\in (2, 2^{*}_{s})$. 
This fact together with $|g_{\e_{n}}(x, t)|\leq \delta+C_{\delta}|t|^{\frac{q-2}{2}}$ in $\R^{N}\times \R$ and the boundedness of $(|u_{n}|)$ in $L^{2}(\R^{N}, \R)$ yields that
\begin{align}\label{glimiti}
\lim_{n\rightarrow \infty}\int_{\R^{N}} g_{\e_{n}} (x, |u_{n}|^{2}) |u_{n}|^{2} \,dx=0= \lim_{n\rightarrow \infty}\int_{\R^{N}} G_{\e_{n}}(x, |u_{n}|^{2}) \, dx.
\end{align}
Taking into account $\langle J'_{\e_{n}}(u_{n}), u_{n}\rangle=0$ and \eqref{glimiti}, we can  infer that $\|u_{n}\|_{\e_{n}}\rightarrow 0$ as $n\rightarrow \infty$. This is impossible because $u_{n}\in \N_{\e_{n}}$, and using $(g_1)$ and $(g_2)$ we can find $\alpha_{0}>0$ such that $\|u_{n}\|^{2}_{\e_{n}}\geq \alpha_{0}$ for all $n\in \mathbb{N}$.
Now, we set $v_{n}(x)=|u_{n}|(x+\tilde{y}_{n})$. Then $(v_{n})$ is bounded in $H^{s}(\R^{N}, \R)$, and we may assume that 
$v_{n}\rightharpoonup v\not\equiv 0$ in $H^{s}(\R^{N}, \R)$  as $n\rightarrow \infty$.
Fix $t_{n}>0$ such that $\tilde{v}_{n}=t_{n} v_{n}\in \mathcal{N}_{0}$. By Lemma \ref{DI} and $u_{n}\in \mathcal{N}_{\e_{n}}$ we can see that 
$$
c_{0}\leq I_{0}(\tilde{v}_{n})\leq \max_{t\geq 0}J_{\e_{n}}(tu_{n})= J_{\e_{n}}(u_{n})
$$
which together with Lemma \ref{AMlem1} implies that $I_{0}(\tilde{v}_{n})\rightarrow c_{0}$. In particular, $\tilde{v}_{n}\nrightarrow 0$ in $H^{s}(\R^{N}, \R)$.
Since $(v_{n})$ and $(\tilde{v}_{n})$ are bounded in $H^{s}(\R^{N}, \R)$ and $\tilde{v}_{n}\nrightarrow 0$  in $H^{s}(\R^{N}, \R)$, we deduce that $t_{n}\rightarrow t^{*}\geq 0$. Indeed $t^{*}>0$ due to $\tilde{v}_{n}\nrightarrow 0$  in $H^{s}(\R^{N}, \R)$. From the uniqueness of the weak limit, we can deduce that $\tilde{v}_{n}\rightharpoonup \tilde{v}=t^{*}v\not\equiv 0$ in $H^{s}(\R^{N}, \R)$. 
This fact combined with Lemma \ref{FS} yields
\begin{equation}\label{elena}
\tilde{v}_{n}\rightarrow \tilde{v} \mbox{ in } H^{s}(\R^{N}, \R).
\end{equation} 
As a consequence, $v_{n}\rightarrow v$ in $H^{s}(\R^{N}, \R)$ as $n\rightarrow \infty$.

Now, we put $y_{n}=\e_{n}\tilde{y}_{n}$ and we show that $(y_{n})$ admits a subsequence, still denoted by $y_{n}$, such that $y_{n}\rightarrow y_{0}$ for some $y_{0}\in \Lambda$ such that $V(y_{0})=V_{0}$. Firstly, we prove that $(y_{n})$ is bounded. Assume by contradiction that, up to a subsequence, $|y_{n}|\rightarrow \infty$ as $n\rightarrow \infty$. Take $R>0$ such that $\Lambda \subset B_{R}$. Since we may suppose that  $|y_{n}|>2R$, we have that for any $z\in B_{R/\e_{n}}$ 
$$
|\e_{n}z+y_{n}|\geq |y_{n}|-|\e_{n}z|>R.
$$
Now, using $(u_{n})\subset \N_{\e_{n}}$, $(V_{1})$, Lemma \ref{DI} and the change of variable $x\mapsto z+\tilde{y}_{n}$ we obtain that 
\begin{align}\label{pasq}
[v_{n}]^{2}+\int_{\R^{N}} V_{0} v_{n}^{2}\, dx &\leq \int_{\R^{N}} g(\e_{n} x+y_{n}, |v_{n}|^{2}) |v_{n}|^{2} \, dx \nonumber\\
&\leq \int_{B_{\frac{R}{\e_{n}}}} \tilde{f}(|v_{n}|^{2}) |v_{n}|^{2} \, dx+\int_{\R^{N}\setminus B_{\frac{R}{\e_{n}}}} f(|v_{n}|^{2}) |v_{n}|^{2} \, dx.
\end{align}
Then, recalling that $v_{n}\rightarrow v$ in $H^{s}(\R^{N}, \R)$ as $n\rightarrow \infty$ and $\tilde{f}(t)\leq \frac{V_{0}}{k}$, we can see that (\ref{pasq}) yields
$$
\min\left\{1, V_{0}\left(1-\frac{1}{k}\right) \right\} \left([v_{n}]^{2}+\int_{\R^{N}} |v_{n}|^{2}\, dx\right)=o_{n}(1),
$$
that is $v_{n}\rightarrow 0$ in $H^{s}(\R^{N}, \R)$, which gives a contradiction. Therefore, $(y_{n})$ is bounded and we may assume that $y_{n}\rightarrow y_{0}\in \R^{N}$. If $y_{0}\notin \overline{\Lambda}$, then we can argue as before to infer that $v_{n}\rightarrow 0$ in $H^{s}(\R^{N}, \R)$, which is impossible. Hence $y_{0}\in \overline{\Lambda}$. Let us note that if $V(y_{0})=V_{0}$, then we can infer that $y_{0}\notin \partial \Lambda$ in view of $(V_2)$. Therefore, it is enough to verify that $V(y_{0})=V_{0}$. Suppose by contradiction that $V(y_{0})>V_{0}$.
Then, using (\ref{elena}), Fatou's Lemma, the invariance of  $\R^{N}$ by translations, Lemma \ref{DI} and Lemma \ref{AMlem1}, we get 
\begin{align*}
c_{0}=I_{0}(\tilde{v})&<\frac{1}{2}[\tilde{v}]^{2}+\frac{1}{2}\int_{\R^{N}} V(y_{0})\tilde{v}^{2} \, dx-\frac{1}{2}\int_{\R^{N}} F(|\tilde{v}|^{2})\, dx\\
&\leq \liminf_{n\rightarrow \infty}\left[\frac{1}{2}[\tilde{v}_{n}]^{2}+\frac{1}{2}\int_{\R^{N}} V(\e_{n}x+y_{n}) |\tilde{v}_{n}|^{2} \, dx-\frac{1}{2}\int_{\R^{N}} F(|\tilde{v}_{n}|^{2})\, dx  \right] \\
&\leq \liminf_{n\rightarrow \infty}\left[\frac{t_{n}^{2}}{2}[|u_{n}|]^{2}+\frac{t_{n}^{2}}{2}\int_{\R^{N}} V(\e_{n}z) |u_{n}|^{2} \, dz-\frac{1}{2}\int_{\R^{N}} F(|t_{n} u_{n}|^{2})\, dz  \right] \\
&\leq \liminf_{n\rightarrow \infty} J_{\e_{n}}(t_{n} u_{n}) \leq \liminf_{n\rightarrow \infty} J_{\e_{n}}(u_{n})\leq c_{0}
\end{align*}
which gives a contradiction. This ends the proof of lemma.
\end{proof}

\noindent
Now we prove the following key lemma which will be fundamental to establish that the solutions of \eqref{Pe} are indeed solutions  of \eqref{P}.
\begin{lem}\label{moser} 
Let $\e_{n}\rightarrow 0$ and $u_{n}\in H^{s}_{\e_{n}}$ be such that $J_{\e_{n}}(u_{n})=c_{\e_{n}}$ and $J'_{\e_{n}}(u_{n})=0$.
Then $v_{n}=|u_{n}|(\cdot+\tilde{y}_{n})$ satisfies $v_{n}\in L^{\infty}(\R^{N},\R)$ and there exists $C>0$ such that 
$$
\|v_{n}\|_{L^{\infty}(\R^{N})}\leq C \mbox{ for all } n\in \mathbb{N},
$$
where $(\tilde{y}_{n})$ is given by Lemma \ref{prop3.3}.
Moreover
$$
\lim_{|x|\rightarrow \infty} v_{n}(x)=0 \mbox{ uniformly in } n\in \mathbb{N}.
$$
\end{lem}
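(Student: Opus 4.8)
The plan is to establish the $L^{\infty}$-bound first, then upgrade it to the decay at infinity, following the Moser iteration scheme of \cite{Moser} combined with the comparison technique sketched in the introduction. First I would work with $v_{n} = |u_{n}|(\cdot + \tilde y_{n})$. By Lemma \ref{prop3.3}, $v_{n} \to v$ strongly in $H^{s}(\R^{N},\R)$, and by the diamagnetic inequality (Lemma \ref{DI}) each $v_{n}$ satisfies, in the weak sense, a Kato-type differential inequality
\begin{equation*}
(-\Delta)^{s} v_{n} + V_{\e_{n}}(\e_{n}x + y_{n}) v_{n} \leq g(\e_{n}x + y_{n}, v_{n}^{2}) v_{n} \quad \mbox{ in } \R^{N}.
\end{equation*}
To justify this I would test the equation \eqref{Pe} (written for the translated function $\tilde u_{n} = u_{n}(\cdot + \tilde y_{n})$) against $\frac{\tilde u_{n}}{u_{\delta,n}}\varphi$ with $u_{\delta,n} = \sqrt{|\tilde u_{n}|^{2} + \delta^{2}}$ and $\varphi \geq 0$ smooth with compact support, use the pointwise algebraic estimates relating the magnetic Gagliardo kernel to the kernel of $|{\cdot}|$, and then pass to the limit $\delta \to 0$; this is exactly the argument announced in the introduction and the one place where genuine care is needed.

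Next I would run the Moser iteration on this scalar inequality. For $L > 0$ set $v_{n,L} = \min\{v_{n}, L\}$ and test against $v_{n,L}^{2(\beta-1)} v_{n}$ for $\beta > 1$; using the fractional "chain-rule'' inequality for $(-\Delta)^{s}$ applied to convex functions (as in \cite{FQT}), the subcritical growth $(g_{1})$--$(g_{2})$, and the Sobolev embedding $\dot H^{s} \hookrightarrow L^{2^{*}_{s}}$, one obtains a reverse-H\"older/bootstrap inequality of the form $\|v_{n}\|_{L^{\beta 2^{*}_{s}}} \leq (C\beta)^{1/\beta}\|v_{n}\|_{L^{\beta 2}}$, with $C$ independent of $n$ because $\|v_{n}\|_{H^{s}}$ is uniformly bounded. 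Iterating over $\beta = (2^{*}_{s}/2)^{m}$ and letting $m \to \infty$ gives $\|v_{n}\|_{L^{\infty}(\R^{N})} \leq C\|v_{n}\|_{L^{2^{*}_{s}}} \leq C$ uniformly in $n$, which is the first assertion. (One must be slightly careful with the first iteration step since $v_{n}$ is only $L^{2^{*}_{s}}$, not $L^{\beta 2^{*}_{s}}$ a priori; the truncation $v_{n,L}$ and a limiting argument in $L$ handle this.)

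For the uniform decay, I would now exploit the strong convergence $v_{n} \to v$ in $H^{s}$ together with the uniform $L^{\infty}$ bound: since $\{\e_{n}x + y_{n}\}$ eventually leaves $\Lambda$ for $|x|$ large (as $y_{n} \to y_{0} \in \Lambda$ and $\e_{n} \to 0$), on that region $g(\e_{n}x+y_{n},\cdot) = \tilde f(\cdot) \leq V_{0}/k$, so the Kato inequality yields
\begin{equation*}
(-\Delta)^{s} v_{n} + \Bigl(V_{0} - \tfrac{V_{0}}{k}\|v_{n}\|_{L^{\infty}}^{\ldots}\Bigr) v_{n} \leq 0
\end{equation*}
— more precisely $(-\Delta)^{s} v_{n} + c_{0} v_{n} \leq h_{n}$ where $c_{0} > 0$ and $h_{n} \to 0$ in a suitable sense — and one compares $v_{n}$ with the explicit supersolution $w(x) = C(1+|x|^{2})^{-(N+2s)/2}$ of $(-\Delta)^{s}w + c_{0}w/2 \geq 0$ outside a large ball, exactly as in \cite{FQT} and as indicated at the end of the statement of Lemma \ref{moser}. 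Because the strong $H^{s}$ convergence gives $\sup_{n}\int_{\R^{N}\setminus B_{\rho}}|v_{n}|^{2^{*}_{s}} \to 0$ as $\rho \to \infty$, the tails of $v_{n}$ are small uniformly in $n$ on a fixed annulus, and the maximum principle for $(-\Delta)^{s} + c_{0}$ (available since $c_{0}>0$) propagates this to $v_{n}(x) \leq \epsilon$ for $|x| \geq R(\epsilon)$, uniformly in $n$. The main obstacle is the rigorous derivation and limiting argument ($\delta \to 0$) for the magnetic Kato inequality on these not-necessarily-bounded-below functions; once that inequality is in hand, the Moser iteration and the comparison are standard, modulo bookkeeping of the $n$-uniformity of all constants.
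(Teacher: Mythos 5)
Your treatment of the uniform $L^{\infty}$ bound is essentially sound, though you reverse the paper's order: the paper runs the Moser iteration \emph{directly on the magnetic equation}, testing \eqref{Pe} with $u_{L,n}^{2(\beta-1)}u_{n}$ and using the pointwise inequality that bounds the real part of the magnetic form from below by the corresponding form of $|u_{n}|$, and only \emph{afterwards} derives the Kato-type subsolution inequality (indeed it invokes the already-proved bound \eqref{UBu} to check that $\tfrac{u_{n}}{u_{\delta,n}}\varphi$ is an admissible test function). Your order (Kato inequality first, scalar Moser iteration second) can be made to work, since the admissibility estimates for $\tfrac{u_{n}}{u_{\delta,n}}\varphi$ can be closed with $\delta$-dependent constants using only $|u_{n}|\in H^{s}\cap L^{2}$, and the subsolution inequality extends by density to nonnegative $H^{s}$ test functions such as $v_{n,L}^{2(\beta-1)}v_{n}$; the exponent bookkeeping in your iteration inequality is slightly off but harmless, and the $n$-uniformity of the first iteration step should be justified via the strong convergence of $(v_{n})$ (or tightness), not mere boundedness.

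The decay part, however, contains a genuine gap. Your starting point, that for $|x|$ large the point $\e_{n}x+y_{n}$ leaves $\Lambda$ so that $g(\e_{n}x+y_{n},\cdot)=\tilde f(\cdot)\leq V_{0}/k$, is not uniform in $n$: since $y_{n}\to y_{0}$, an interior point of $\Lambda$, the set $\{x:\e_{n}x+y_{n}\in\Lambda\}$ contains balls of radius of order $r/\e_{n}$, so the \emph{full} nonlinearity $f$ acts on arbitrarily large balls as $n\to\infty$. The only way to dominate $g(\e_{n}x+\e_{n}\tilde y_{n},v_{n}^{2})$ by $\tfrac{V_{0}}{2}$ outside a \emph{fixed} ball (as in \eqref{hzero}) is to already know that $v_{n}(x)\to 0$ as $|x|\to\infty$ uniformly in $n$ and then use $(g_{1})$ --- which is exactly the statement you are trying to prove, so your comparison with the supersolution $w\sim|x|^{-(N+2s)}$ is circular as written. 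The fallback you offer --- uniform smallness of $\int_{\R^{N}\setminus B_{\rho}}|v_{n}|^{2^{*}_{s}}dx$ plus ``the maximum principle'' --- does not bridge integral-to-pointwise smallness: a maximum principle needs pointwise control of the data, which is precisely what is missing. To close the argument you need either the paper's route --- compare $v_{n}$ with $z_{n}=\mathcal{K}\ast g_{n}$, where $g_{n}=g(\e_{n}\cdot+\e_{n}\tilde y_{n},v_{n}^{2})v_{n}$ is uniformly bounded in $L^{\infty}$ and convergent in $L^{r}$, and exploit the decay of the Bessel kernel $\mathcal{K}$ as in \cite{FQT,AM} --- or a quantitative local boundedness (De Giorgi--Nash--Moser type) estimate on unit balls centred at far points, including control of the nonlocal tail, which converts the small $L^{2^{*}_{s}}$ tails into pointwise smallness. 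The power-type comparison you sketch is what the paper performs later, in the proof of Theorem \ref{thm1}, using the uniform vanishing established in this lemma as input.
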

\begin{proof}
For any $L>0$ we define $u_{L,n}:=\min\{|u_{n}|, L\}\geq 0$ and we set $v_{L, n}=u_{L,n}^{2(\beta-1)}u_{n}$, where $\beta>1$ will be chosen later.
Taking $v_{L, n}$ as test function in (\ref{Pe}) we can see that
\begin{align}\label{conto1FF}
&\Re\left(\iint_{\R^{2N}} \frac{(u_{n}(x)-u_{n}(y)e^{\imath A_{\e_{n}}(\frac{x+y}{2})\cdot (x-y)})}{|x-y|^{N+2s}} \overline{(u_{n}(x)u_{L,n}^{2(\beta-1)}(x)-u_{n}(y)u_{L,n}^{2(\beta-1)}(y)e^{\imath A_{\e_{n}}(\frac{x+y}{2})\cdot (x-y)})} \, dx dy\right)   \nonumber \\
&=\int_{\R^{N}} g_{\e_{n}}(x, |u_{n}|^{2}) |u_{n}|^{2}u_{L,n}^{2(\beta-1)}  \,dx-\int_{\R^{N}} V_{\e_{n}}(x) |u_{n}|^{2} u_{L,n}^{2(\beta-1)} \, dx.
\end{align}
Let us note that
\begin{align*}
&\Re\left[(u_{n}(x)-u_{n}(y)e^{\imath A_{\e_{n}}(\frac{x+y}{2})\cdot (x-y)})\overline{(u_{n}(x)u_{L,n}^{2(\beta-1)}(x)-u_{n}(y)u_{L,n}^{2(\beta-1)}(y)e^{\imath A_{\e_{n}}(\frac{x+y}{2})\cdot (x-y)})}\right] \\
&=\Re\Bigl[|u_{n}(x)|^{2}u_{L,n}^{2(\beta-1)}(x)-u_{n}(x)\overline{u_{n}(y)} u_{L,n}^{2(\beta-1)}(y)e^{-\imath A_{\e_{n}}(\frac{x+y}{2})\cdot (x-y)}-u_{n}(y)\overline{u_{n}(x)} u_{L,n}^{2(\beta-1)}(x) e^{\imath A_{\e_{n}}(\frac{x+y}{2})\cdot (x-y)} \\
&\qquad +|u_{n}(y)|^{2}u_{L,n}^{2(\beta-1)}(y) \Bigr] \\
&\geq (|u_{n}(x)|^{2}u_{L,n}^{2(\beta-1)}(x)-|u_{n}(x)||u_{n}(y)|u_{L,n}^{2(\beta-1)}(y)-|u_{n}(y)||u_{n}(x)|u_{L,n}^{2(\beta-1)}(x)+|u_{n}(y)|^{2}u^{2(\beta-1)}_{L,n}(y) \\
&=(|u_{n}(x)|-|u_{n}(y)|)(|u_{n}(x)|u_{L,n}^{2(\beta-1)}(x)-|u_{n}(y)|u_{L,n}^{2(\beta-1)}(y)),
\end{align*}
so we have
\begin{align}\label{realeF}
&\Re\left(\iint_{\R^{2N}} \frac{(u_{n}(x)-u_{n}(y)e^{\imath A_{\e_{n}}(\frac{x+y}{2})\cdot (x-y)})}{|x-y|^{N+2s}} \overline{(u_{n}(x)u_{L,n}^{2(\beta-1)}(x)-u_{n}(y)u_{L,n}^{2(\beta-1)}(y)e^{\imath A_{\e_{n}}(\frac{x+y}{2})\cdot (x-y)})} \, dx dy\right) \nonumber\\
&\geq \iint_{\R^{2N}} \frac{(|u_{n}(x)|-|u_{n}(y)|)}{|x-y|^{N+2s}} (|u_{n}(x)|u_{L,n}^{2(\beta-1)}(x)-|u_{n}(y)|u_{L,n}^{2(\beta-1)}(y))\, dx dy.
\end{align}
For all $t\geq 0$, let us define
\begin{equation*}
\gamma(t)=\gamma_{L, \beta}(t)=t t_{L}^{2(\beta-1)}
\end{equation*}
where  $t_{L}=\min\{t, L\}$. 
Let us observe that, since $\gamma$ is an increasing function, it holds
\begin{align*}
(a-b)(\gamma(a)- \gamma(b))\geq 0 \quad \mbox{ for any } a, b\in \R.
\end{align*}
Let us consider the functions 
\begin{equation*}
\Lambda(t)=\frac{|t|^{2}}{2} \quad \mbox{ and } \quad \Gamma(t)=\int_{0}^{t} (\gamma'(\tau))^{\frac{1}{2}} d\tau,
\end{equation*}
and we note that
\begin{equation}\label{Gg}
\Lambda'(a-b)(\gamma(a)-\gamma(b))\geq |\Gamma(a)-\Gamma(b)|^{2} \mbox{ for any } a, b\in\R. 
\end{equation}
Indeed, for any $a, b\in \R$ such that $a<b$, the Jensen inequality yields
\begin{align*}
\Lambda'(a-b)(\gamma(a)-\gamma(b))=(a-b)\int_{b}^{a} \gamma'(t)dt=(a-b)\int_{b}^{a} (\Gamma'(t))^{2}dt\geq \left(\int_{b}^{a} \Gamma'(t) dt\right)^{2}=(\Gamma(a)-\Gamma(b))^{2}.
\end{align*}
By \eqref{Gg}, it follows that
\begin{align}\label{Gg1}
|\Gamma(|u_{n}(x)|)- \Gamma(|u_{n}(y)|)|^{2} \leq (|u_{n}(x)|- |u_{n}(y)|)((|u_{n}|u_{L,n}^{2(\beta-1)})(x)- (|u_{n}|u_{L,n}^{2(\beta-1)})(y)). 
\end{align}
Then, in view of \eqref{realeF} and \eqref{Gg1}, we obtain
\begin{align}\label{conto1FFF}
&\Re\left(\iint_{\R^{2N}} \frac{(u_{n}(x)-u_{n}(y)e^{\imath A_{\e_{n}}(\frac{x+y}{2})\cdot (x-y)})}{|x-y|^{N+2s}} \overline{(u_{n}(x)u_{L,n}^{2(\beta-1)}(x)-u_{n}(y)u_{L,n}^{2(\beta-1)}(y)e^{\imath A_{\e_{n}}(\frac{x+y}{2})\cdot (x-y)})} \, dx dy\right) \nonumber \\
&\qquad \geq [\Gamma(|u_{n}|)]^{2}.
\end{align}
Since $\Gamma(|u_{n}|)\geq \frac{1}{\beta} |u_{n}| u_{L,n}^{\beta-1}$ and using the fractional Sobolev embedding $\mathcal{D}^{s,2}(\R^{N}, \R)\subset L^{\2}(\R^{N}, \R)$ (see \cite{DPV}), we deduce that 
\begin{equation}\label{SS1}
[\Gamma(|u_{n}|)]^{2}\geq S_{*} \|\Gamma(|u_{n}|)\|^{2}_{L^{\2}(\R^{N})}\geq \left(\frac{1}{\beta}\right)^{2} S_{*}\||u_{n}| u_{L,n}^{\beta-1}\|^{2}_{L^{\2}(\R^{N})}.
\end{equation}
Putting together \eqref{conto1FF}, \eqref{conto1FFF} and \eqref{SS1} we can infer that
\begin{align}\label{BMS}
\left(\frac{1}{\beta}\right)^{2} S_{*}\||u_{n}| u_{L,n}^{\beta-1}\|^{2}_{L^{\2}(\R^{N})}+\int_{\R^{N}} V_{\e_{n}}(x)|u_{n}|^{2}u_{L,n}^{2(\beta-1)} dx\leq \int_{\R^{N}} g_{\e_{n}}(x, |u_{n}|^{2}) |u_{n}|^{2} u_{L,n}^{2(\beta-1)} dx.
\end{align}
On the other hand, from assumptions $(g_1)$ and $(g_2)$, for any $\xi>0$ there exists $C_{\xi}>0$ such that
\begin{equation}\label{SS2}
g_{\e}(x, t^{2})t^{2}\leq \xi |t|^{2}+C_{\xi}|t|^{\2} \mbox{ for all } t\in \R.
\end{equation}
Taking $\xi\in (0, V_{0})$ and using \eqref{BMS} and \eqref{SS2} we can see that
\begin{equation}\label{simo1}
\|w_{L,n}\|_{L^{\2}(\R^{N})}^{2}\leq C \beta^{2} \int_{\R^{N}} |u_{n}|^{\2}u_{L,n}^{2(\beta-1)} \, dx,
\end{equation}
where $w_{L,n}:=|u_{n}| u_{L,n}^{\beta-1}$. \\
Now, we take $\beta=\frac{\2}{2}$ and fix $R>0$. Recalling that $0\leq u_{L,n}\leq |u_{n}|$ and applying the H\"older inequality we have
\begin{align}\label{simo2}
\int_{\R^{N}} |u_{n}|^{\2}u_{L,n}^{2(\beta-1)}dx&=\int_{\R^{N}} |u_{n}|^{\2-2} |u_{n}|^{2} u_{L,n}^{\2-2}dx \nonumber\\
&=\int_{\R^{N}} |u_{n}|^{\2-2} (|u_{n}| u_{L,n}^{\frac{\2-2}{2}})^{2}dx \nonumber\\
&\leq \int_{\{|u_{n}|<R\}} R^{\2-2} |u_{n}|^{\2} dx+\int_{\{|u_{n}|>R\}} |u_{n}|^{\2-2} (|u_{n}| u_{L,n}^{\frac{\2-2}{2}})^{2}dx \nonumber\\
&\leq \int_{\{|u_{n}|<R\}} R^{\2-2} |u_{n}|^{\2} dx+\left(\int_{\{|u_{n}|>R\}} |u_{n}|^{\2} dx\right)^{\frac{\2-2}{\2}} \left(\int_{\R^{N}} (|u_{n}| u_{L,n}^{\frac{\2-2}{2}})^{\2}dx\right)^{\frac{2}{\2}}.
\end{align}
Since $(|u_{n}|)$ is bounded in $H^{s}(\R^{N}, \R)$, we can see that for any $R$ sufficiently large
\begin{equation}\label{simo3}
\left(\int_{\{|u_{n}|>R\}} |u_{n}|^{\2} dx\right)^{\frac{\2-2}{\2}}\leq  \frac{1}{2C\beta^{2}}.
\end{equation}
Putting together \eqref{simo1}, \eqref{simo2} and \eqref{simo3} we get
\begin{equation*}
\left(\int_{\R^{N}} (|u_{n}| u_{L,n}^{\frac{\2-2}{2}})^{\2} \, dx\right)^{\frac{2}{\2}}\leq C\beta^{2} \int_{\R^{N}} R^{\2-2} |u_{n}|^{\2} dx<\infty
\end{equation*}
and taking the limit as $L\rightarrow \infty$ we obtain $|u_{n}|\in L^{\frac{(\2)^{2}}{2}}(\R^{N},\R)$.
Now, using $0\leq u_{L,n}\leq |u_{n}|$ and by passing to the limit as $L\rightarrow \infty$ in \eqref{simo1}, we have
\begin{equation*}
\||u_{n}|\|_{L^{\beta\2}(\R^{N})}^{2\beta}\leq C \beta^{2} \int_{\R^{N}} |u_{n}|^{\2+2(\beta-1)} \, dx,
\end{equation*}
from which we deduce that
\begin{equation*}
\left(\int_{\R^{N}} |u_{n}|^{\beta\2} dx\right)^{\frac{1}{(\beta-1)\2}}\leq (C \beta)^{\frac{1}{\beta-1}} \left(\int_{\R^{N}} |u_{n}|^{\2+2(\beta-1)}\, dx\right)^{\frac{1}{2(\beta-1)}}.
\end{equation*}
For $m\geq 1$ we define $\beta_{m+1}$ inductively so that $\2+2(\beta_{m+1}-1)=\2 \beta_{m}$ and $\beta_{1}=\frac{\2}{2}$. Then we have
\begin{equation*}
\left(\int_{\R^{N}} |u_{n}|^{\beta_{m+1}\2} dx\right)^{\frac{1}{(\beta_{m+1}-1)\2}}\leq (C \beta_{m+1})^{\frac{1}{\beta_{m+1}-1}} \left(\int_{\R^{N}} |u_{n}|^{\2\beta_{m}}\, dx \right)^{\frac{1}{\2(\beta_{m}-1)}}.
\end{equation*}
Let us define
$$
D_{m}=\left(\int_{\R^{N}} |u_{n}|^{\2\beta_{m}}\, dx \right)^{\frac{1}{\2(\beta_{m}-1)}}.
$$
Using an iteration argument, we can find $C_{0}>0$ independent of $m$ such that 
$$
D_{m+1}\leq \prod_{k=1}^{m} (C \beta_{k+1})^{\frac{1}{\beta_{k+1}-1}}  D_{1}\leq C_{0} D_{1}.
$$
Taking the limit as $m\rightarrow \infty$ we get 
\begin{equation}\label{UBu}
\||u_{n}|\|_{L^{\infty}(\R^{N})}\leq C_{0}D_{1}=:K \mbox{ for all } n\in \mathbb{N}.
\end{equation}
Moreover, by interpolation, $(|u_{n}|)$ strongly converges in $L^{r}(\R^{N}, \R)$ for all $r\in (2, \infty)$, and in view of the growth assumptions on $g$, also $g_{\e_{n}}(x, |u_{n}|^{2})|u_{n}|$ strongly converges  in the same Lebesgue spaces. \\
Now we aim to prove that $|u_{n}|$ is a weak subsolution to 
\begin{equation}\label{Kato0}
\left\{
\begin{array}{ll}
(-\Delta)^{s}v+V_{\e_{n}}(x) v=g_{\e_{n}}(x, v^{2})v &\mbox{ in } \R^{N} \\
v\geq 0 \quad \mbox{ in } \R^{N}.
\end{array}
\right.
\end{equation}
Fix $\varphi\in C^{\infty}_{c}(\R^{N}, \R)$ such that $\varphi\geq 0$, and we take $\psi_{\delta, n}=\frac{u_{n}}{u_{\delta, n}}\varphi$ as test function in \eqref{Pe}, where we set $u_{\delta,n}=\sqrt{|u_{n}|^{2}+\delta^{2}}$ for $\delta>0$. We note that $\psi_{\delta, n}\in H^{s}_{\e_{n}}$ for all $\delta>0$ and $n\in \mathbb{N}$. Indeed, $\int_{\R^{N}} V_{\e_{n}}(x) |\psi_{\delta,n}|^{2} dx\leq \int_{\supp(\varphi)} V_{\e_{n}} (x)\varphi^{2} dx<\infty$. 
On the other hand, we can observe
\begin{align*}
\psi_{\delta,n}(x)-\psi_{\delta,n}(y)e^{\imath A_{\e_{n}}(\frac{x+y}{2})\cdot (x-y)}&=\left(\frac{u_{n}(x)}{u_{\delta,n}(x)}\right)\varphi(x)-\left(\frac{u_{n}(y)}{u_{\delta,n}(y)}\right)\varphi(y)e^{\imath A_{\e_{n}}(\frac{x+y}{2})\cdot (x-y)}\\
&=\left[\left(\frac{u_{n}(x)}{u_{\delta,n}(x)}\right)-\left(\frac{u_{n}(y)}{u_{\delta,n}(x)}\right)e^{\imath A_{\e_{n}}(\frac{x+y}{2})\cdot (x-y)}\right]\varphi(x) \\
&\quad +\left[\varphi(x)-\varphi(y)\right] \left(\frac{u_{n}(y)}{u_{\delta,n}(x)}\right) e^{\imath A_{\e_{n}}(\frac{x+y}{2})\cdot (x-y)} \\
&\quad +\left(\frac{u_{n}(y)}{u_{\delta,n}(x)}-\frac{u_{n}(y)}{u_{\delta,n}(y)}\right)\varphi(y) e^{\imath A_{\e_{n}}(\frac{x+y}{2})\cdot (x-y)}. 
\end{align*}
Using $|z+w+k|^{2}\leq 4(|z|^{2}+|w|^{2}+|k|^{2})$ for all $z,w,k\in \C$, $|e^{\imath t}|=1$ for all $t\in \R$, $u_{\delta,n}\geq \delta$, $|\frac{u_{n}}{u_{\delta,n}}|\leq 1$, \eqref{UBu} and $|\sqrt{|z|^{2}+\delta^{2}}-\sqrt{|w|^{2}+\delta^{2}}|\leq ||z|-|w||$ for all $z, w\in \C$, we obtain that
\begin{align*}
&|\psi_{\delta,n}(x)-\psi_{\delta,n}(y)e^{\imath A_{\e_{n}}(\frac{x+y}{2})\cdot (x-y)}|^{2} \\
&\leq \frac{4}{\delta^{2}}|u_{n}(x)-u_{n}(y)e^{\imath A_{\e_{n}}(\frac{x+y}{2})\cdot (x-y)}|^{2}\|\varphi\|^{2}_{L^{\infty}(\R^{N})} +\frac{4}{\delta^{2}}|\varphi(x)-\varphi(y)|^{2} \||u_{n}|\|^{2}_{L^{\infty}(\R^{N})} \\
&\quad+\frac{4}{\delta^{4}} \||u_{n}|\|^{2}_{L^{\infty}(\R^{N})} \|\varphi\|^{2}_{L^{\infty}(\R^{N})} |u_{\delta,n}(y)-u_{\delta,n}(x)|^{2} \\
&\leq \frac{4}{\delta^{2}}|u_{n}(x)-u_{n}(y)e^{\imath A_{\e_{n}}(\frac{x+y}{2})\cdot (x-y)}|^{2}\|\varphi\|^{2}_{L^{\infty}(\R^{N})} +\frac{4K^{2}}{\delta^{2}}|\varphi(x)-\varphi(y)|^{2} \\
&\quad+\frac{4K^{2}}{\delta^{4}} \|\varphi\|^{2}_{L^{\infty}(\R^{N})} ||u_{n}(y)|-|u_{n}(x)||^{2}. 
\end{align*}
Since $u_{n}\in H^{s}_{\e_{n}}$, $|u_{n}|\in H^{s}(\R^{N}, \R)$ (by Lemma \ref{DI}) and $\varphi\in C^{\infty}_{c}(\R^{N}, \R)$, we deduce that $\psi_{\delta,n}\in H^{s}_{\e_{n}}$. 
Then we have
\begin{align}\label{Kato1}
&\Re\left[\iint_{\R^{2N}} \frac{(u_{n}(x)-u_{n}(y)e^{\imath A_{\e_{n}}(\frac{x+y}{2})\cdot (x-y)})}{|x-y|^{N+2s}} \left(\frac{\overline{u_{n}(x)}}{u_{\delta,n}(x)}\varphi(x)-\frac{\overline{u_{n}(y)}}{u_{\delta,n}(y)}\varphi(y)e^{-\imath A_{\e_{n}}(\frac{x+y}{2})\cdot (x-y)}  \right) dx dy\right] \nonumber\\
&+\int_{\R^{N}} V_{\e_{n}}(x)\frac{|u_{n}|^{2}}{u_{\delta,n}}\varphi dx=\int_{\R^{N}} g_{\e_{n}}(x, |u_{n}|^{2})\frac{|u_{n}|^{2}}{u_{\delta,n}}\varphi dx.
\end{align}
Now, using $\Re(z)\leq |z|$ for all $z\in \C$ and  $|e^{\imath t}|=1$ for all $t\in \R$, we get
\begin{align}\label{alves1}
&\Re\left[(u_{n}(x)-u_{n}(y)e^{\imath A_{\e_{n}}(\frac{x+y}{2})\cdot (x-y)}) \left(\frac{\overline{u_{n}(x)}}{u_{\delta,n}(x)}\varphi(x)-\frac{\overline{u_{n}(y)}}{u_{\delta,n}(y)}\varphi(y)e^{-\imath A_{\e_{n}}(\frac{x+y}{2})\cdot (x-y)}  \right)\right] \nonumber\\
&=\Re\left[\frac{|u_{n}(x)|^{2}}{u_{\delta,n}(x)}\varphi(x)+\frac{|u_{n}(y)|^{2}}{u_{\delta,n}(y)}\varphi(y)-\frac{u_{n}(x)\overline{u_{n}(y)}}{u_{\delta,n}(y)}\varphi(y)e^{-\imath A_{\e_{n}}(\frac{x+y}{2})\cdot (x-y)} -\frac{u_{n}(y)\overline{u_{n}(x)}}{u_{\delta,n}(x)}\varphi(x)e^{\imath A_{\e_{n}}(\frac{x+y}{2})\cdot (x-y)}\right] \nonumber \\
&\geq \left[\frac{|u_{n}(x)|^{2}}{u_{\delta,n}(x)}\varphi(x)+\frac{|u_{n}(y)|^{2}}{u_{\delta,n}(y)}\varphi(y)-|u_{n}(x)|\frac{|u_{n}(y)|}{u_{\delta,n}(y)}\varphi(y)-|u_{n}(y)|\frac{|u_{n}(x)|}{u_{\delta,n}(x)}\varphi(x) \right].
\end{align}
Let us note that
\begin{align}\label{alves2}
&\frac{|u_{n}(x)|^{2}}{u_{\delta,n}(x)}\varphi(x)+\frac{|u_{n}(y)|^{2}}{u_{\delta,n}(y)}\varphi(y)-|u_{n}(x)|\frac{|u_{n}(y)|}{u_{\delta,n}(y)}\varphi(y)-|u_{n}(y)|\frac{|u_{n}(x)|}{u_{\delta,n}(x)}\varphi(x) \nonumber\\
&=  \frac{|u_{n}(x)|}{u_{\delta,n}(x)}(|u_{n}(x)|-|u_{n}(y)|)\varphi(x)-\frac{|u_{n}(y)|}{u_{\delta,n}(y)}(|u_{n}(x)|-|u_{n}(y)|)\varphi(y) \nonumber\\
&=\left[\frac{|u_{n}(x)|}{u_{\delta,n}(x)}(|u_{n}(x)|-|u_{n}(y)|)\varphi(x)-\frac{|u_{n}(x)|}{u_{\delta,n}(x)}(|u_{n}(x)|-|u_{n}(y)|)\varphi(y)\right] \nonumber\\
&\quad+\left(\frac{|u_{n}(x)|}{u_{\delta,n}(x)}-\frac{|u_{n}(y)|}{u_{\delta,n}(y)} \right) (|u_{n}(x)|-|u_{n}(y)|)\varphi(y) \nonumber\\
&=\frac{|u_{n}(x)|}{u_{\delta,n}(x)}(|u_{n}(x)|-|u_{n}(y)|)(\varphi(x)-\varphi(y)) +\left(\frac{|u_{n}(x)|}{u_{\delta,n}(x)}-\frac{|u_{n}(y)|}{u_{\delta,n}(y)} \right) (|u_{n}(x)|-|u_{n}(y)|)\varphi(y) \nonumber\\
&\geq \frac{|u_{n}(x)|}{u_{\delta,n}(x)}(|u_{n}(x)|-|u_{n}(y)|)(\varphi(x)-\varphi(y)), 
\end{align}
where in the last inequality we used the fact that
$$
\left(\frac{|u_{n}(x)|}{u_{\delta,n}(x)}-\frac{|u_{n}(y)|}{u_{\delta,n}(y)} \right) (|u_{n}(x)|-|u_{n}(y)|)\varphi(y)\geq 0
$$
because
$$
h(t)=\frac{t}{\sqrt{t^{2}+\delta^{2}}} \mbox{ is increasing for } t\geq 0 \quad \mbox{ and } \quad \varphi\geq 0 \mbox{ in }\R^{N}.
$$
Since
$$
\frac{|\frac{|u_{n}(x)|}{u_{\delta,n}(x)}(|u_{n}(x)|-|u_{n}(y)|)(\varphi(x)-\varphi(y))|}{|x-y|^{N+2s}}\leq \frac{||u_{n}(x)|-|u_{n}(y)||}{|x-y|^{\frac{N+2s}{2}}} \frac{|\varphi(x)-\varphi(y)|}{|x-y|^{\frac{N+2s}{2}}}\in L^{1}(\R^{2N}),
$$
and $\frac{|u_{n}(x)|}{u_{\delta,n}(x)}\rightarrow 1$ a.e. in $\R^{N}$ as $\delta\rightarrow 0$,
we can use \eqref{alves1}, \eqref{alves2} and the Dominated Convergence Theorem to deduce that
\begin{align}\label{Kato2}
&\limsup_{\delta\rightarrow 0} \Re\left[\iint_{\R^{2N}} \frac{(u_{n}(x)-u_{n}(y)e^{\imath A_{\e_{n}}(\frac{x+y}{2})\cdot (x-y)})}{|x-y|^{N+2s}} \left(\frac{\overline{u_{n}(x)}}{u_{\delta,n}(x)}\varphi(x)-\frac{\overline{u_{n}(y)}}{u_{\delta,n}(y)}\varphi(y)e^{-\imath A_{\e_{n}}(\frac{x+y}{2})\cdot (x-y)}  \right) dx dy\right] \nonumber\\
&\geq \limsup_{\delta\rightarrow 0} \iint_{\R^{2N}} \frac{|u_{n}(x)|}{u_{\delta,n}(x)}(|u_{n}(x)|-|u_{n}(y)|)(\varphi(x)-\varphi(y)) \frac{dx dy}{|x-y|^{N+2s}} \nonumber\\
&=\iint_{\R^{2N}} \frac{(|u_{n}(x)|-|u_{n}(y)|)(\varphi(x)-\varphi(y))}{|x-y|^{N+2s}} dx dy.
\end{align}
On the other hand, from the Dominated Convergence Theorem again (we recall that $\frac{|u_{n}|^{2}}{u_{\delta, n}}\leq |u_{n}|$ and $\varphi\in C^{\infty}_{c}(\R^{N}, \R)$) we can see that
\begin{equation}\label{Kato3}
\lim_{\delta\rightarrow 0} \int_{\R^{N}} V_{\e_{n}}(x)\frac{|u_{n}|^{2}}{u_{\delta,n}}\varphi dx=\int_{\R^{N}} V_{\e_{n}}(x)|u_{n}|\varphi dx
\end{equation}
and
\begin{equation}\label{Kato4}
\lim_{\delta\rightarrow 0}  \int_{\R^{N}} g_{\e_{n}}(x, |u_{n}|^{2})\frac{|u_{n}|^{2}}{u_{\delta,n}}\varphi dx=\int_{\R^{N}} g_{\e_{n}}(x, |u_{n}|^{2}) |u_{n}|\varphi dx.
\end{equation}
Putting together \eqref{Kato1}, \eqref{Kato2}, \eqref{Kato3} and \eqref{Kato4} we can deduce that
\begin{align*}
\iint_{\R^{2N}} \frac{(|u_{n}(x)|-|u_{n}(y)|)(\varphi(x)-\varphi(y))}{|x-y|^{N+2s}} dx dy+\int_{\R^{N}} V_{\e_{n}}(x)|u_{n}|\varphi dx\leq 
\int_{\R^{N}} g_{\e_{n}}(x, |u_{n}|^{2}) |u_{n}|\varphi dx
\end{align*}
for any $\varphi\in C^{\infty}_{c}(\R^{N}, \R)$ such that $\varphi\geq 0$, that is $|u_{n}|$ is a weak subsolution to \eqref{Kato0}.
Then, using $(V_{1})$, it is clear that $v_{n}=|u_{n}|(\cdot+\tilde{y}_{n})$ solves 
\begin{equation}\label{Pkat}
(-\Delta)^{s} v_{n} + V_{0}v_{n}\leq g(\e_{n} x+\e_{n}\tilde{y}_{n}, v_{n}^{2})v_{n} \mbox{ in } \R^{N}. 
\end{equation}
Let us denote by $z_{n}\in H^{s}(\R^{N}, \R)$ the unique solution to
\begin{equation}\label{US}
(-\Delta)^{s} z_{n} + V_{0}z_{n}=g_{n} \mbox{ in } \R^{N},
\end{equation}
where
$$
g_{n}:=g(\e_{n} x+\e_{n}\tilde{y}_{n}, v_{n}^{2})v_{n}\in L^{r}(\R^{N}, \R) \quad \forall r\in [2, \infty].
$$
Since \eqref{UBu} yields $\|v_{n}\|_{L^{\infty}(\R^{N})}\leq C$ for all $n\in \mathbb{N}$, by interpolation we know that $v_{n}\rightarrow v$ strongly converges  in $L^{r}(\R^{N}, \R)$ for all $r\in (2, \infty)$, for some $v\in L^{r}(\R^{N}, \R)$, and by the growth assumptions on $g$, we can see that also $g_{n}\rightarrow  f(v^{2})v$ in $L^{r}(\R^{N}, \R)$ and $\|g_{n}\|_{L^{\infty}(\R^{N})}\leq C$ for all $n\in \mathbb{N}$.
From \cite{FQT}, we deduce that $z_{n}=\mathcal{K}*g_{n}$, where $\mathcal{K}$ is the Bessel kernel, and arguing as in \cite{AM}, we deduce that $|z_{n}(x)|\rightarrow 0$ as $|x|\rightarrow \infty$ uniformly with respect to $n\in \mathbb{N}$.
Since $v_{n}$ satisfies \eqref{Pkat} and $z_{n}$ solves \eqref{US}, by comparison it is easy to see that $0\leq v_{n}\leq z_{n}$ a.e. in $\R^{N}$ and for all $n\in \mathbb{N}$. In particular, we can infer that $v_{n}(x)\rightarrow 0$ as $|x|\rightarrow \infty$ uniformly with respect to $n\in \mathbb{N}$.
\end{proof}

\begin{remark}
We recall that in \cite{HIL} the authors proved a Kato's inequality for the fractional magnetic operator $((-\imath\nabla-A(x))^{2}+m^{2})^{\frac{\alpha}{2}}$ with $\alpha\in (0, 1]$ and $m> 0$, or $\alpha=1$ and $m=0$, borrowing some arguments used in \cite{Kato}. As observed in \cite{DS}, when $\alpha= 1$ and $m=0$, this operator coincides with $(-\Delta)^{1/2}_{A}$. 
However, we suspect that a Kato's type-inequality of the form 
$$
(-\Delta)^{s}|u|\leq \Re(sign(u) (-\Delta)^{s}_{A}u)
$$
holds for any $u\in H^{s}_{A}(\R^{N}, \C)$, with $s\in (0,1)$.
Indeed, when $u\in C^{\infty}_{c}(\R^{N}, \C)\setminus\{0\}$ we have  the following pointwise Kato's inequality
\begin{align*}
(-\Delta)^{s}|u|(x)&=\int_{\R^{N}}\frac{|u(x)|-|u(y)|}{|x-y|^{N+2s}} dy=\int_{\R^{N}}\frac{\frac{|u(x)|^{2}}{|u(x)|}-|u(y)|\frac{|\bar{u}(x)|}{|u(x)|}}{|x-y|^{N+2s}} dy \\
&=\int_{\R^{N}}\frac{\frac{|u(x)|^{2}}{|u(x)|}-\frac{|\bar{u}(x) u(y)e^{\imath A(\frac{x+y}{2})\cdot (x-y)}|}{|u(x)|}}{|x-y|^{N+2s}} dy\\
&\leq \Re\left(\int_{\R^{N}} \frac{\frac{|u(x)|^{2}}{|u(x)|}-\frac{\bar{u}(x) u(y)e^{\imath A(\frac{x+y}{2})\cdot (x-y)}}{|u(x)|}}{|x-y|^{N+2s}} dy\right)\\
&=\Re\left(\frac{\bar{u}(x)}{|u(x)|}\int_{\R^{N}}  \left[\frac{u(x)-u(y)e^{\imath A(\frac{x+y}{2})\cdot (x-y)}}{|x-y|^{N+2s}}\right] dy\right) \\
&=\Re(sign(u) (-\Delta)^{s}_{A}u)(x).
\end{align*}
Moreover, for any $u\in H^{s}_{A}(\R^{N}, \C)$ such that $c_{1}\leq |u(x)|\leq c_{2}$ a.e. $x\in \R^{N}$, for some $c_{1}, c_{2}>0$, we can follow the arguments in the proof of the above lemma (it is enough to replace $u_{\delta}$ by $|u|$ and use the fact that $|u|$ is bounded from below and above) to see that
\begin{align}\label{Katofinale}
&\Re\left[\iint_{\R^{2N}} \frac{(u(x)-u(y)e^{\imath A(\frac{x+y}{2})\cdot (x-y)})}{|x-y|^{N+2s}} \left(\frac{\overline{u(x)}}{|u(x)|}\varphi(x)-\frac{\overline{u(y)}}{|u(y)|}\varphi(y)e^{-\imath A(\frac{x+y}{2})\cdot (x-y)}  \right) dx dy\right] \nonumber\\
&\geq \iint_{\R^{2N}} \frac{(|u(x)|-|u(y)|)(\varphi(x)-\varphi(y))}{|x-y|^{N+2s}} dx dy
\end{align}
for any $\varphi\in C^{\infty}_{c}(\R^{N}, \R)$ such that $\varphi\geq 0$. Unfortunately, if $|u|$ does not satisfy the above bounds, we can not use $\frac{u}{|u|}\varphi$ as test function to prove \eqref{Katofinale}. This motives the use of  $u_{\delta}=\sqrt{|u|^{2}+\delta^{2}}$.
 
\end{remark}

\noindent
We end this section giving the proof of Theorem \ref{thm1}.
\begin{proof}
For any $\e_{n}\rightarrow 0$, let $u_{n}\in H^{s}_{\e_{n}}$ be such that $J_{\e_{n}}(u_{n})=c_{\e_{n}}$ and $J'_{\e_{n}}(u_{n})=0$.
Using Lemma \ref{prop3.3}, there exists $(\tilde{y}_{n})\subset \R^{N}$ such that $\e_{n}\tilde{y}_{n}\rightarrow y_{0}$ for some $y_{0} \in \Lambda$ such that $V(y_{0})=V_{0}$. 
Then we can find $r>0$ such that, for some subsequence still denoted by itself, we obtain $B_{r}(\e_{n}\tilde{y}_{n})\subset \Lambda$ for any $n\in \mathbb{N}$.
Therefore, $B_{\frac{r}{\e_{n}}}(\tilde{y}_{n})\subset \Lambda_{\e_{n}}$ for any $n\in \mathbb{N}$. Consequently, 
$$
\R^{N}\setminus \Lambda_{\e_{n}}\subset \R^{N} \setminus B_{\frac{r}{\e_{n}}}(\tilde{y}_{n}) \mbox{ for any } n\in \mathbb{N}.
$$ 
By Lemma \ref{moser}, we can find $R>0$ such that 
$$
v_{n}(x)<a \mbox{ for } |x|\geq R, n\in \mathbb{N},
$$ 
where $v_{n}(x)=|u_{n}|(x+ \tilde{y}_{n})$. 
Hence, $|u_{n}(x)|<a$ for any $x\in \R^{N}\setminus B_{R}(\tilde{y}_{n})$ and $n\in \mathbb{N}$. Then there exists $\nu \in \mathbb{N}$ such that for any $n\geq \nu$ and $r/\e_{n}>R$ it holds 
$$
\R^{N}\setminus \Lambda_{\e_{n}}\subset \R^{N} \setminus B_{\frac{r}{\e_{n}}}(\tilde{y}_{n})\subset \R^{N}\setminus B_{R}(\tilde{y}_{n}),
$$ 
which gives $|u_{n}(x)|<a$ for any $x\in \R^{N}\setminus \Lambda_{\e_{n}}$ and $n\geq \nu$.  \\
This means that there exists $\e_{0}>0$ such that, for all $\e\in (0, \e_{0})$, problem \eqref{R} admits a nontrivial solution $u_{\e}$. 
Taking $\hat{u}_{\e}(x)=u_{\e}(x/\e)$, we can infer that $\hat{u}_{\e}$ is a solution to (\ref{P}). 
Finally, we study the behavior of the maximum points of  $|u_{n}|$. In view of $(g_1)$, there exists $\gamma\in (0,a)$ such that
\begin{align}\label{4.18HZ}
g_{\e}(x, t^{2})t^{2}\leq \frac{V_{0}}{2}t^{2}, \mbox{ for all } x\in \R^{N}, |t|\leq \gamma.
\end{align}
Arguing as above, we can take $R>0$ such that
\begin{align}\label{4.19HZ}
\||u_{n}|\|_{L^{\infty}(\R^{N}\setminus B_{R}(\tilde{y}_{n}))}<\gamma.
\end{align}
Up to a subsequence, we may also assume that
\begin{align}\label{4.20HZ}
\||u_{n}|\|_{L^{\infty}(B_{R}(\tilde{y}_{n}))}\geq \gamma.
\end{align}
Indeed, if \eqref{4.20HZ} is not true, we get $\||u_{n}|\|_{L^{\infty}(\R^{N})}< \gamma$, and from $J_{\e_{n}}'(u_{n})=0$, \eqref{4.18HZ} and Lemma \ref{DI} it follows that 
$$
[|u_{n}|]^{2}+\int_{\R^{N}}V_{0}|u_{n}|^{2}dx\leq \|u_{n}\|^{2}_{\e_{n}}=\int_{\R^{N}} g_{\e_{n}}(x, |u_{n}|^{2})|u_{n}|^{2}\,dx\leq \frac{V_{0}}{2}\int_{\R^{N}}|u_{n}|^{2}\, dx
$$
which gives $\||u_{n}|\|_{0}=0$, that is a contradiction. Hence \eqref{4.20HZ} holds true.\\
Taking into account \eqref{4.19HZ} and \eqref{4.20HZ}, we can infer that the maximum point $p_{n}$ of $|u_{n}|$ belong to $B_{R}(\tilde{y}_{n})$, that is $p_{n}=\tilde{y}_{n}+q_{n}$ for some $q_{n}\in B_{R}$. 
Observing that $\hat{u}_{n}(x)=u_{n}(x/\e_{n})$ is the solution to \eqref{P}, we can see that the maximum point $\eta_{n}$ of $|\hat{u}_{n}|$ is of the form $\eta_{n}=\e_{n}\tilde{y}_{n}+\e_{n}q_{n}$. Since $q_{n}\in B_{R}$, $\e_{n}\tilde{y}_{n}\rightarrow y_{0}$ and $V(y_{0})=V_{0}$, from the continuity of $V$ we can conclude that
$$
\lim_{n\rightarrow \infty} V(\eta_{n})=V_{0}.
$$
Next we give a decay estimate for $|\hat{u}_{n}|$.
Firstly, we recall that in virtue of Lemma $4.3$ in \cite{FQT} there exists a function $w$ such that 
\begin{align}\label{HZ1}
0<w(x)\leq \frac{C}{1+|x|^{N+2s}}
\end{align}
and
\begin{align}\label{HZ2}
(-\Delta)^{s} w+\frac{V_{0}}{2}w\geq 0 \mbox{ in } \R^{N}\setminus B_{R_{1}}, 
\end{align}
for some suitable $R_{1}>0$. Using Lemma \ref{moser}, we know that $v_{n}(x)\rightarrow 0$ as $|x|\rightarrow \infty$ uniformly in $n\in \mathbb{N}$, so there exists $R_{2}>0$ such that
\begin{equation}\label{hzero}
h_{n}:=g(\e_{n}x+\e_{n}\tilde{y}_{n}, v_{n}^{2})v_{n}\leq \frac{V_{0}}{2}v_{n}  \mbox{ in } \R^{N}\setminus B_{R_{2}}.
\end{equation}
Let us denote by $w_{n}$ the unique solution to 
$$
(-\Delta)^{s}w_{n}+V_{0}w_{n}=h_{n} \mbox{ in } \R^{N}.
$$
Then $w_{n}(x)\rightarrow 0$ as $|x|\rightarrow \infty$ uniformly in $n\in \mathbb{N}$, and by comparison $0\leq v_{n}\leq w_{n}$ in $\R^{N}$. Moreover, in light of \eqref{hzero}, it holds
\begin{align*}
(-\Delta)^{s}w_{n}+\frac{V_{0}}{2}w_{n}=h_{n}-\frac{V_{0}}{2}w_{n}\leq 0 \mbox{ in } \R^{N}\setminus B_{R_{2}}.
\end{align*}
Choose $R_{3}=\max\{R_{1}, R_{2}\}$ and we set 
\begin{align}\label{HZ4}
c=\inf_{B_{R_{3}}} w>0 \quad  \mbox{ and } \quad \tilde{w}_{n}=(b+1)w-c w_{n},
\end{align}
where $b=\sup_{n\in \mathbb{N}} \|w_{n}\|_{L^{\infty}(\R^{N})}<\infty$. 
Our goal is to show that 
\begin{equation}\label{HZ5}
\tilde{w}_{n}\geq 0 \mbox{ in } \R^{N}.
\end{equation}
Firstly, we observe that
\begin{align}
&\lim_{|x|\rightarrow \infty} \sup_{n\in \mathbb{N}}\tilde{w}_{n}(x)=0,  \label{HZ0N} \\
&\tilde{w}_{n}\geq bc+w-bc>0 \mbox{ in } B_{R_{3}} \label{HZ0},\\
&(-\Delta)^{s} \tilde{w}_{n}+\frac{V_{0}}{2}\tilde{w}_{n}\geq 0 \mbox{ in } \R^{N}\setminus B_{R_{3}} \label{HZ00}.
\end{align}
Now, we argue by contradiction and assume that there exists a sequence $(\bar{x}_{j, n})\subset \R^{N}$ such that 
\begin{align}\label{HZ6}
\inf_{x\in \R^{N}} \tilde{w}_{n}(x)=\lim_{j\rightarrow \infty} \tilde{w}_{n}(\bar{x}_{j, n})<0. 
\end{align}
Thanks to (\ref{HZ0N}), we can deduce that $(\bar{x}_{j, n})$ is bounded and then, up to subsequence, we may suppose that there exists $\bar{x}_{n}\in \R^{N}$ such that $\bar{x}_{j, n}\rightarrow \bar{x}_{n}$ as $j\rightarrow \infty$. 
Thus, (\ref{HZ6}) becomes
\begin{align}\label{HZ7}
\inf_{x\in \R^{N}} \tilde{w}_{n}(x)= \tilde{w}_{n}(\bar{x}_{n})<0.
\end{align}
Using the minimality of $\bar{x}_{n}$ and the representation formula for the fractional Laplacian \cite{DPV}, we can see that 
\begin{align}\label{HZ8}
(-\Delta)^{s}\tilde{w}_{n}(\bar{x}_{n})=\frac{C(N, s)}{2} \int_{\R^{N}} \frac{2\tilde{w}_{n}(\bar{x}_{n})-\tilde{w}_{n}(\bar{x}_{n}+\xi)-\tilde{w}_{n}(\bar{x}_{n}-\xi)}{|\xi|^{N+2s}} d\xi\leq 0.
\end{align}
Taking into account (\ref{HZ0}) and (\ref{HZ6}), we obtain that $\bar{x}_{n}\in \R^{N}\setminus B_{R_{3}}$.
This together with (\ref{HZ7}) and (\ref{HZ8}) implies that 
$$
(-\Delta)^{s} \tilde{w}_{n}(\bar{x}_{n})+\frac{V_{0}}{2}\tilde{w}_{n}(\bar{x}_{n})<0,
$$
which contradicts (\ref{HZ00}). Therefore (\ref{HZ5}) is established. \\
From (\ref{HZ1}), (\ref{HZ5}) and $v_{n}\leq w_{n}$ we get
\begin{align*}
0\leq v_{n}(x)\leq w_{n}(x)\leq \frac{(b+1)}{c}w(x)\leq \frac{\tilde{C}}{1+|x|^{N+2s}} \mbox{ for all } n\in \mathbb{N}, x\in \R^{N},
\end{align*}
for some constant $\tilde{C}>0$. Recalling the definition of $v_{n}$, we can infer that  
\begin{align*}
|\hat{u}_{n}|(x)&=|u_{n}|\left(\frac{x}{\e_{n}}\right)=v_{n}\left(\frac{x}{\e_{n}}-\tilde{y}_{n}\right) \\
&\leq \frac{\tilde{C}}{1+|\frac{x}{\e_{n}}-\tilde{y}_{n}|^{N+2s}} \\
&=\frac{\tilde{C} \e_{n}^{N+2s}}{\e_{n}^{N+2s}+|x- \e_{n} \tilde{y}_{n}|^{N+2s}} \\
&\leq \frac{\tilde{C} \e_{n}^{N+2s}}{\e_{n}^{N+2s}+|x-\eta_{n}|^{N+2s}} \quad \quad \forall x\in \R^{N}.
\end{align*}
\end{proof}

\section{critical magnetic problem}
\noindent
This section is devoted to the study of the existence and concentration of solutions to (\ref{Pcritico}).
Using the change of variable $u(x)\mapsto u(\e x)$ we can consider the following fractional critical problem
\begin{equation}\label{Rcritico}
(-\Delta)^{s}_{A_{\e}} u + V_{\e}(x)u =  f(|u|^{2})u+|u|^{\2-2}u \mbox{ in } \R^{N}.
\end{equation}
Fix $k>1$ and $a>0$ such that $f(a)+a^{\frac{\2-2}{2}}=\frac{V_{0}}{k}$, and we introduce the functions
$$
\tilde{f}(t):=
\begin{cases}
f(t)+(t^{+})^{\frac{\2-2}{2}}& \text{ if $t \leq a$} \\
\frac{V_{0}}{k}    & \text{ if $t >a$}.
\end{cases}
$$ 
and
$$
g(x, t)=\chi_{\Lambda}(x)(f(t)+(t^{+})^{\frac{\2-2}{2}})+(1-\chi_{\Lambda}(x))\tilde{f}(t).
$$
Let us note that from assumptions $(h_1)$-$(h_4)$, $g$ satisfies the following properties:
\begin{compactenum}[($k_1$)]
\item $\displaystyle{\lim_{t\rightarrow 0} g(x, t)=0}$ uniformly in $x\in \R^{N}$;
\item $g(x, t)\leq f(t)+t^{\frac{\2-2}{2}}$ for all $x\in \R^{N}$ and $t>0$;
\item $(i)$ $0< \frac{\theta}{2} G(x, t)\leq g(x, t)t$ for any $x\in \Lambda$ and $t>0$, \\
$(ii)$ $0\leq  G(x, t)\leq g(x, t)t\leq \frac{V(x)}{k}t$ for any $x\in \R^{N}\setminus \Lambda$ and $t>0$;
\item for any $x\in \Lambda$, the function $t\mapsto g(x,t)$ is increasing for $t>0$, and for any $x\in \R^{N}\setminus \Lambda$ the function $t\mapsto g(x,t)$ is increasing for $t\in (0, a)$.
\end{compactenum}
Thus, we consider the following auxiliary problem 
\begin{equation}\label{Pecritico}
(-\Delta)^{s}_{A_{\e}} u + V_{\e}(x)u =  g_{\e}(x, |u|^{2})u \mbox{ in } \R^{N}, 
\end{equation}
and we look for critical points of the following functional
$$
J_{\e}(u)=\frac{1}{2}\|u\|^{2}_{\e}-\frac{1}{2}\int_{\R^{N}} G_{\e}(x, |u|^{2})\, dx.
$$
Let us consider the autonomous problem associated with \eqref{Rcritico}, that is
\begin{equation}\label{APecritico}
(-\Delta)^{s} u + V_{0}u =  f(u^{2})u+|u|^{\2-2}u \mbox{ in } \R^{N}, 
\end{equation}
and we denote by $I_{0}: H^{s}(\R^{N}, \R)\rightarrow \R$ the corresponding functional
$$
I_{0}(u)=\frac{1}{2}\|u\|^{2}_{0}-\frac{1}{2}\int_{\R^{N}} F(u^{2})\, dx-\frac{1}{\2}\int_{\R^{N}} |u|^{\2}\, dx.
$$
Since many calculations are adaptations to those presented in the two early sections, we will emphasize only the differences between the subcritical and critical case. \\
Let us begin proving that $J_{\e}$ possesses a mountain pass geometry.
\begin{lem}\label{MPGcritico}
\begin{compactenum}[$(i)$]
\item $J_{\e}(0)=0$;
\item there exist $\alpha, \rho>0$ such that $J_{\e}(u)\geq \alpha$ for any $u\in \h$ such that $\|u\|_{\e}=\rho$;
\item there exists $e\in \h$ with $\|e\|_{\e}>\rho$ such that $J_{\e}(e)<0$.
\end{compactenum}
\end{lem}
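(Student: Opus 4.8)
The plan is to mimic the proof of Lemma \ref{MPG}, making only the adjustments forced by the presence of the Sobolev-critical term $|u|^{\2-2}u$. Part $(i)$ is immediate since $\|0\|_\e=0$ and $G_\e(x,0)=0$. For part $(ii)$, I would use property $(k_2)$, namely $g(x,t)\leq f(t)+t^{\frac{\2-2}{2}}$, together with $(h_1)$--$(h_2)$ which give, for every $\delta>0$, a constant $C_\delta>0$ with $g_\e(x,t^2)t^2\leq \delta|t|^2+C_\delta|t|^{\2}$ for all $t\in\R$ (note that here the upper growth is genuinely critical, so one must estimate $G_\e(x,|u|^2)$ by $\frac{\delta}{2}|u|^2+\frac{C_\delta}{\2}|u|^{\2}$). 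Integrating and applying Theorem \ref{Sembedding} (the continuous embedding $\h\hookrightarrow L^r(\R^N,\C)$ for all $r\in[2,\2]$, in particular for $r=\2$) yields
\begin{equation*}
J_\e(u)\geq \frac12\|u\|_\e^2-\frac{\delta}{2}C\|u\|_\e^2-C_\delta'\|u\|_\e^{\2}.
\end{equation*}
Choosing $\delta$ small so that $\frac12-\frac{\delta C}{2}\geq\frac14$, and then $\rho>0$ small enough that $C_\delta'\rho^{\2-2}\leq\frac18$, we get $J_\e(u)\geq\frac18\rho^2=:\alpha>0$ whenever $\|u\|_\e=\rho$.

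For part $(iii)$ I would argue exactly as in Lemma \ref{MPG}$(iii)$: pick any $u\in\h\setminus\{0\}$ with $\supp(u)\subset\Lambda_\e$, so that on its support $g_\e(x,\cdot)=f_\e(\cdot)+(\cdot)^{\frac{\2-2}{2}}$ and in particular $G_\e(x,t^2)\geq F_\e(t^2)$. Using $(h_3)$ — the Ambrosetti--Rabinowitz-type condition $\frac{\theta}{2}F(t)\leq tf(t)$ with $\theta>2$ — one obtains $F(t)\geq c_1 t^{\theta/2}-c_2$ for suitable $c_1>0$, hence for $t>0$
\begin{equation*}
J_\e(tu)\leq \frac{t^2}{2}\|u\|_\e^2-\frac12\int_{\Lambda_\e}F_\e(t^2|u|^2)\,dx\leq \frac{t^2}{2}\|u\|_\e^2-Ct^{\theta}\int_{\Lambda_\e}|u|^{\theta}\,dx+C,
\end{equation*}
and since $\theta>2$ the right-hand side tends to $-\infty$ as $t\to\infty$. (One may even simply use the critical term: $\int |u|^{\2}$ dominates with exponent $\2>2$.) Taking $e=t_0 u$ for $t_0$ large enough that $J_\e(e)<0$ and $\|e\|_\e>\rho$ completes the proof.

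There is essentially no obstacle here: the critical exponent does not interfere with the mountain pass geometry itself — it only affects compactness, which is precisely why $(i)$--$(iii)$ are separated out into this lemma while the Palais--Smale analysis (where the concentration-compactness argument and the comparison with the critical Sobolev level will be needed) is postponed to a later lemma. The only point requiring a little care is that in part $(ii)$ the nonlinear term can no longer be absorbed as $o(\|u\|_\e^2)$ alone, so one genuinely needs the superquadratic exponent $\2>2$ to win for $\rho$ small; this is why the estimate is stated with $\|u\|_\e^{\2}$ rather than $\|u\|_\e^q$. I would write the proof in the telegraphic style already used for Lemma \ref{MPG}, pointing only to $(k_1)$--$(k_2)$, $(h_1)$--$(h_3)$ and Theorem \ref{Sembedding}.
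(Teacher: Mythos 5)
Your proposal is correct and follows essentially the same route as the paper: the paper likewise bounds $J_{\e}(u)\geq \frac{1}{2}\|u\|^{2}_{\e}-\delta \|u\|^{2}_{\e}-C_{\delta} \|u\|^{\2}_{\e}$ via $(k_1)$--$(k_2)$ and Theorem \ref{Sembedding} for the geometry near the origin, and uses $(k_3)$ on functions supported in $\Lambda_{\e}$ to get $J_{\e}(tu)\leq \frac{t^{2}}{2}\|u\|^{2}_{\e}-Ct^{\theta}\int_{\Lambda_{\e}}|u|^{\theta}\,dx+C\rightarrow-\infty$. Your extra remarks (explicit choice of $\delta$ and $\rho$, and the observation that only compactness, not the geometry, is affected by the critical exponent) are consistent with, and slightly more detailed than, the paper's telegraphic argument.
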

\begin{proof}
First of all, by $(k_1)$-$(k_2)$ and Theorem \ref{Sembedding}, for any $\delta>0$ there exists $C_{\delta}>0$ such that
$$
J_{\e}(u)\geq \frac{1}{2}\|u\|^{2}_{\e}-\delta \|u\|^{2}_{\e}-C_{\delta} \|u\|^{\2}_{\e}
$$
that is $(i)$ holds.
Secondly, using $(k_3)$, for any $u\in \h\setminus\{0\}$ with $\supp(u)\subset \Lambda_{\e}$ and $t>0$ we have 
\begin{align*}
J_{\e}(tu)&\leq \frac{t^{2}}{2} \|u\|^{2}_{\e}-\frac{1}{2}\int_{\Lambda_{\e}} G_{\e}(x, t^{2}|u|^{2})\, dx \\
&\leq \frac{t^{2}}{2} \|u\|^{2}_{\e}-Ct^{\theta} \int_{\Lambda_{\e}} |u|^{\theta}\, dx + C\rightarrow -\infty \quad \mbox{ as } t\rightarrow \infty. 
\end{align*}
\end{proof}

\noindent
Arguing as in Lemma $4.3$ in \cite{A3} and Proposition $3.2.1$ in \cite{DPMV}, we have the following variant of the Concentration-Compactness Lemma (see also \cite{PP}). Firstly, we recall the definition of tight sequence.
\begin{defn}
We say that a sequence $(u_{n})$ is tight in $\mathcal{D}^{s,2}(\R^{N}, \R)$ if for every $\delta>0$ there exists $R>0$ such that $\int_{\R^{N}\setminus B_{R}} |(-\Delta)^{\frac{s}{2}}u_{n}|^{2} dx\leq \delta$ for any $n\in \mathbb{N}$.
\end{defn}
\begin{lem}\label{CCL}
Let $(u_{n})$ be a bounded tight sequence in $\mathcal{D}^{s,2}(\R^{N}, \R)$ such that $u_{n}\rightharpoonup u$ in $\mathcal{D}^{s,2}(\R^{N}, \R)$. 
Let us assume that
\begin{align}\begin{split}\label{46FS}
&|(-\Delta)^{\frac{s}{2}}u_{n}|^{2}\rightharpoonup \mu,  \\
&|u_{n}|^{\2}\rightharpoonup \nu, 
\end{split}\end{align}
in the sense of measure, where $\mu$ and $\nu$ are two bounded non-negative measures on $\R^{N}$. Then, there exist an  at most a countable set $I$, a family of distinct points $(x_{i})_{i\in I}\subset \R^{N}$ and $(\mu_{i})_{i\in I}, (\nu_{i})_{i\in I}\subset (0, \infty)$ such that
\begin{align}
&\nu=|u|^{\2}+\sum_{i\in I} \nu_{i} \delta_{x_{i}} \label{47FS}\\
&\mu\geq |(-\Delta)^{\frac{s}{2}}u|^{2}+\sum_{i\in I} \mu_{i} \delta_{x_{i}} \label{48FS}.
\end{align}
Moreover, the following relation holds true
\begin{align}\label{50FS}
\mu_{i}\geq S_{*} \nu_{i}^{\frac{2}{2^{*}_{s}}} \quad \forall i\in I.
\end{align}
\end{lem}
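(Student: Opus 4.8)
## Proof proposal for Lemma 4.4 (Concentration–Compactness)

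The plan is to follow the classical argument of Lions adapted to the fractional setting, exactly as in \cite{A3, DPMV, PP}, reducing the statement to the Sobolev inequality $[\varphi]^{2}\geq S_{*}\|\varphi\|_{L^{\2}(\R^{N})}^{2}$ applied to suitable cut-offs. First I would reduce to the case $u\equiv 0$ by the standard Brezis--Lieb type trick: setting $w_{n}=u_{n}-u$, the weak convergence $u_{n}\rightharpoonup u$ in $\mathcal{D}^{s,2}(\R^{N},\R)$ together with a.e.\ convergence (a consequence of compact embedding on balls, valid since the sequence is tight) gives $|(-\Delta)^{\frac{s}{2}}w_{n}|^{2}\rightharpoonup \mu-|(-\Delta)^{\frac{s}{2}}u|^{2}=:\tilde\mu\geq 0$ and $|w_{n}|^{\2}\rightharpoonup \nu-|u|^{\2}=:\tilde\nu\geq 0$ in the sense of measures. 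So it suffices to prove: if $w_{n}\rightharpoonup 0$, then $\tilde\nu=\sum_{i\in I}\nu_{i}\delta_{x_{i}}$ is purely atomic and $\tilde\mu\geq \sum_{i\in I}\mu_{i}\delta_{x_{i}}$ with $\mu_{i}\geq S_{*}\nu_{i}^{2/\2}$.

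Next I would establish the key ``reverse Sobolev on every ball'' estimate: for any $\phi\in C^{\infty}_{c}(\R^{N},\R)$,
\begin{equation*}
\left(\int_{\R^{N}}|\phi|^{\2}\,d\tilde\nu\right)^{\frac{2}{\2}}\leq \frac{1}{S_{*}}\int_{\R^{N}}|\phi|^{2}\,d\tilde\mu.
\end{equation*}
This follows by applying the fractional Sobolev inequality to $\phi w_{n}$, namely $S_{*}\|\phi w_{n}\|_{L^{\2}}^{2}\leq [\phi w_{n}]^{2}$, and then passing to the limit: the left-hand side tends to $S_{*}(\int|\phi|^{\2}d\tilde\nu)^{2/\2}$ by definition of $\tilde\nu$, while on the right-hand side one uses the algebraic decomposition of $[\phi w_{n}]^{2}$ and the fact that the ``cross'' and ``commutator'' terms $\iint \frac{|\phi(x)-\phi(y)|^{2}}{|x-y|^{N+2s}}|w_{n}(y)|^{2}\,dxdy$ vanish in the limit, because $w_{n}\to 0$ in $L^{2}_{loc}$ (tightness plus compact embedding) and $\phi$ has compact support with $|\phi(x)-\phi(y)|\lesssim \min\{1,|x-y|\}$. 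This leaves exactly $\limsup_{n}[\phi w_{n}]^{2}\leq \int|\phi|^{2}\,d\tilde\mu$, giving the claimed inequality. From this point the abstract measure-theoretic lemma of Lions applies verbatim: the reverse Hölder relation between $\tilde\mu$ and $\tilde\nu$ forces $\tilde\nu$ to be a countable sum of Diracs $\sum_{i\in I}\nu_{i}\delta_{x_{i}}$ with $\nu_{i}>0$, and testing the displayed inequality with $\phi$ concentrating at $x_{i}$ yields $\tilde\mu(\{x_{i}\})=:\mu_{i}\geq S_{*}\nu_{i}^{2/\2}$; since the atoms are disjoint and $\tilde\mu\geq 0$, summing gives $\tilde\mu\geq\sum_{i\in I}\mu_{i}\delta_{x_{i}}$. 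Undoing the reduction via $w_{n}=u_{n}-u$ recovers \eqref{47FS}, \eqref{48FS} and \eqref{50FS}.

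The main obstacle is the control of the nonlocal cross terms when localizing the Gagliardo seminorm by a cut-off $\phi$: unlike the local case, $[\phi w_{n}]^{2}$ does not split cleanly, and one must carefully show that $\iint \frac{|\phi(x)w_{n}(x)-\phi(y)w_{n}(y)|^{2}}{|x-y|^{N+2s}}\,dxdy = \iint \frac{|\phi(x)|^{2}|w_{n}(x)-w_{n}(y)|^{2}}{|x-y|^{N+2s}}\,dxdy + o_{n}(1)$, using the boundedness of $(w_{n})$ in $\mathcal{D}^{s,2}$, its local $L^{2}$-convergence to $0$, tightness, and the Lipschitz/boundedness of $\phi$ together with Hölder's inequality (this is precisely the technical step carried out in \cite{PP} and \cite{DPMV}). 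Once this localization estimate is in hand, the remainder is the classical Lions argument and requires no new ideas.
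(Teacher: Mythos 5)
The paper does not actually prove this lemma: it is quoted from the literature ("arguing as in Lemma 4.3 in [A3] and Proposition 3.2.1 in [DPMV], see also [PP]"), so there is no in-paper argument to compare with step by step. Your outline follows the standard Lions route that those references adapt to the fractional setting, and most of it is sound: the reduction to $u\equiv 0$ via Brezis--Lieb and weak $L^{2}$ convergence of $(-\Delta)^{s/2}u_{n}$, the vanishing of the cross terms $\iint |w_{n}(y)|^{2}|\phi(x)-\phi(y)|^{2}|x-y|^{-N-2s}\,dx\,dy$ (boundedness in $L^{2^{*}_{s}}$ plus $L^{2}_{loc}$ convergence suffice), and the appeal to Lions' reverse-H\"older lemma to get atomicity of $\tilde\nu$ and the bound $\mu_{i}\geq S_{*}\nu_{i}^{2/2^{*}_{s}}$.

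There is, however, one genuine gap, and it sits exactly at the technical heart of the fractional version: the step "this leaves exactly $\limsup_{n}[\phi w_{n}]^{2}\leq \int|\phi|^{2}\,d\tilde\mu$". Your decomposition of $[\phi w_{n}]^{2}$ controls the \emph{localized Gagliardo density} $\int|\phi(x)|^{2}\bigl(\int |w_{n}(x)-w_{n}(y)|^{2}|x-y|^{-N-2s}dy\bigr)dx$, whereas $\tilde\mu$ is by hypothesis the weak-$*$ limit of $|(-\Delta)^{s/2}w_{n}|^{2}\,dx$. These two densities coincide globally (up to the normalization constant) but not locally, so the asserted inequality does not follow from what you wrote; as written, your argument proves the lemma for a measure $\mu$ defined through Gagliardo densities, not for the stated one. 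To close the gap one either applies the Sobolev inequality in the form $S_{*}\|\phi w_{n}\|_{L^{2^{*}_{s}}}^{2}\leq \|(-\Delta)^{s/2}(\phi w_{n})\|_{L^{2}}^{2}$ together with a commutator estimate ($[(-\Delta)^{s/2},\phi]$ is of lower order, hence $[(-\Delta)^{s/2},\phi]w_{n}\rightarrow 0$ in $L^{2}$ when $w_{n}\rightharpoonup 0$, so $\|(-\Delta)^{s/2}(\phi w_{n})\|_{L^{2}}^{2}\rightarrow\int|\phi|^{2}d\tilde\mu$), or one restates the lemma with $\mu$ defined via the Gagliardo densities and the corresponding best constant --- which is in fact the form in which the lemma is used later in Lemma \ref{PSccritico}, where $\mu_{i}$ is compared with localized Gagliardo seminorms of $|u_{n}|$. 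Either repair is standard, but it is a missing ingredient in your sketch rather than a detail, and it is precisely the point where references such as [PP] resort to heavier tools (profile decomposition).
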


\noindent
Now we prove the following compactness result.
\begin{lem}\label{PSccritico}
Let $c\in \R$ be such that $c<\frac{s}{N}S_{*}^{\frac{N}{2s}}$. Then $J_{\e}$ satisfies the Palais-Smale condition at the level $c$.
\end{lem}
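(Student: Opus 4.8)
The plan is to run the Palais--Smale argument of Lemma~\ref{PSc}, the decisive new ingredients being the Concentration--Compactness Lemma~\ref{CCL} and the level restriction $c<\frac{s}{N}S_{*}^{\frac{N}{2s}}$, which together prevent the loss of compactness caused by the critical exponent. Let $(u_{n})\subset\h$ be a $(PS)_{c}$ sequence. Exactly as in the opening of Lemma~\ref{PSc}, but now using $(k_3)$, $\theta>2$, $k>1$ and $\theta<\2$, I get
$$
c+o_{n}(1)\|u_{n}\|_{\e}\geq J_{\e}(u_{n})-\tfrac{1}{\theta}\langle J'_{\e}(u_{n}),u_{n}\rangle\geq\Bigl(\tfrac12-\tfrac1\theta\Bigr)\Bigl(1-\tfrac1k\Bigr)\|u_{n}\|_{\e}^{2}+\Bigl(\tfrac1\theta-\tfrac1{\2}\Bigr)\int_{\Lambda_{\e}}|u_{n}|^{\2}\,dx,
$$
so $(u_{n})$ is bounded in $\h$ and, by Lemma~\ref{DI}, $(|u_{n}|)$ is bounded in $H^{s}(\R^{N},\R)$, hence in $\mathcal{D}^{s,2}(\R^{N},\R)$. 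Passing to a subsequence, $u_{n}\rightharpoonup u$ in $\h$, $|u_{n}|\rightharpoonup|u|$ in $H^{s}$, $|u_{n}|\to|u|$ in $L^{r}_{loc}(\R^{N})$ for $r\in[1,\2)$ and a.e.\ in $\R^{N}$; since the ``lower order'' part of $g$ is subcritical, the compact local embeddings give $J'_{\e}(u)=0$, so $u$ solves \eqref{Pecritico} and $J_{\e}(u)=J_{\e}(u)-\frac1\theta\langle J'_{\e}(u),u\rangle\geq0$.

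Next I would reproduce the tightness estimate \eqref{T}: testing \eqref{Pecritico} with $\eta_{R}u_{n}$ and arguing as in \eqref{Pa1}--\eqref{Pa5} (valid verbatim once $R$ is so large that $\Lambda_{\e}\subset B_{R/2}$, since outside $\Lambda_{\e}$ one still has $g_{\e}(x,|u_{n}|^{2})|u_{n}|^{2}\leq\frac{V_{\e}(x)}{k}|u_{n}|^{2}$ by $(k_3)$-$(ii)$), which shows that $(|u_{n}|)$ is tight in $\mathcal{D}^{s,2}(\R^{N},\R)$ and, via the diamagnetic inequality, that no mass of $|u_{n}|^{\2}$ escapes to infinity. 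Then Lemma~\ref{CCL} applies to $(|u_{n}|)$ and yields an at most countable set $I$, points $(x_{i})_{i\in I}\subset\R^{N}$ and weights $\mu_{i},\nu_{i}>0$ such that $|u_{n}|^{\2}\rightharpoonup\nu=|u|^{\2}+\sum_{i\in I}\nu_{i}\delta_{x_{i}}$, $\mu\geq|(-\Delta)^{\frac{s}{2}}|u||^{2}+\sum_{i\in I}\mu_{i}\delta_{x_{i}}$, and $\mu_{i}\geq S_{*}\nu_{i}^{2/\2}$ for all $i\in I$.

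To prove $I=\emptyset$, fix $i\in I$ and a cut-off $\phi_{\rho}(x)=\phi((x-x_{i})/\rho)$, and test \eqref{Pecritico} with $\phi_{\rho}u_{n}$. Splitting the magnetic Gagliardo term as in Lemma~\ref{PSc} (the phase-dependent ``cross'' term being negligible as $\rho\to0$ by H\"older and the analogue of \eqref{PS3}), and bounding the diagonal term from below, through the diamagnetic inequality, by the $\mu$-mass of $B_{\rho}(x_{i})$, I pass to the limit $n\to\infty$ and then $\rho\to0$. If $x_{i}\notin\Lambda_{\e}$, the right-hand side is dominated by $\frac1k\int V_{\e}\phi_{\rho}|u_{n}|^{2}\,dx\to\frac1k\int V_{\e}\phi_{\rho}|u|^{2}\,dx\to0$ (no atom, since $V_{\e}|u_{n}|^{2}\to V_{\e}|u|^{2}$ in $L^{1}_{loc}$), so $\mu_{i}=0$ and hence $\nu_{i}=0$, a contradiction. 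If $x_{i}\in\Lambda_{\e}$, the $V_{\e}$-term and the subcritical part of $g$ carry no atom, so I obtain $\mu_{i}\leq\nu_{i}$, which together with $\mu_{i}\geq S_{*}\nu_{i}^{2/\2}$ forces $\nu_{i}\geq S_{*}^{\frac{N}{2s}}$. But then, using $J_{\e}(u_{n})=c+o_{n}(1)$, $\langle J'_{\e}(u_{n}),u_{n}\rangle=o_{n}(1)$, $(k_3)$-$(ii)$ on $\R^{N}\setminus\Lambda_{\e}$ and $(h_3)$ on $\Lambda_{\e}$,
$$
c+o_{n}(1)=J_{\e}(u_{n})-\tfrac12\langle J'_{\e}(u_{n}),u_{n}\rangle\geq\tfrac12\int_{\Lambda_{\e}}\bigl(f(|u_{n}|^{2})|u_{n}|^{2}-F(|u_{n}|^{2})\bigr)\,dx+\tfrac{s}{N}\int_{\Lambda_{\e}}|u_{n}|^{\2}\,dx\geq\tfrac{s}{N}\int_{\Lambda_{\e}}|u_{n}|^{\2}\,dx,
$$
so, by lower semicontinuity of the weak-$*$ limit on the open set $\Lambda_{\e}$, $c\geq\frac{s}{N}\nu(\Lambda_{\e})\geq\frac{s}{N}\nu_{i}\geq\frac{s}{N}S_{*}^{\frac{N}{2s}}$, contradicting the hypothesis. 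Hence $I=\emptyset$.

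With $I=\emptyset$ and the tightness, $|u_{n}|^{\2}\rightharpoonup|u|^{\2}$ forces $|u_{n}|\to|u|$ in $L^{\2}(\R^{N},\R)$, so $\int g_{\e}(x,|u_{n}|^{2})|u_{n}|^{2}\,dx\to\int g_{\e}(x,|u|^{2})|u|^{2}\,dx$; comparing $\langle J'_{\e}(u_{n}),u_{n}\rangle=o_{n}(1)$ with $\langle J'_{\e}(u),u\rangle=0$ gives $\|u_{n}\|_{\e}\to\|u\|_{\e}$, and since $\h$ is a Hilbert space this upgrades the weak convergence to $u_{n}\to u$ in $\h$. The main obstacle is transferring the concentration--compactness analysis to the magnetic setting: Lemma~\ref{CCL} is stated only for real functions in $\mathcal{D}^{s,2}$, so every step of the blow-up analysis must be carried out on $|u_{n}|$ through the diamagnetic inequality, and the localized identities — especially the negligibility of the phase ``cross'' term and the lower bound of the truncated magnetic Gagliardo energy by the $\mu$-mass of a small ball — require the delicate nonlocal cut-off computations of Lemma~\ref{PSc} rather than a simple Leibniz rule; one must also keep track of whether the concentration occurs inside $\Lambda_{\e}$ (excluded by the level threshold) or outside it (excluded by subcriticality).
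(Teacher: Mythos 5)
Your proposal follows the paper's strategy very closely: boundedness via $(k_3)$, the tail estimate obtained by testing with $\eta_{R}u_{n}$ as in Lemma \ref{PSc}, the Concentration--Compactness Lemma \ref{CCL} applied to $(|u_{n}|)$ through the diamagnetic inequality, localized tests with $\psi_{\rho}u_{n}$ whose phase ``cross'' term is negligible, and the threshold $c<\frac{s}{N}S_{*}^{\frac{N}{2s}}$ used to rule out concentration in $\Lambda_{\e}$. The genuine divergence is your claim that $I=\emptyset$, on which your endgame (strong convergence of $|u_{n}|$ in $L^{\2}(\R^{N},\R)$, hence convergence of the whole nonlinear term) rests, and this claim is not established by the dichotomy you propose. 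Your ``outside'' branch invokes $(k_3)$-$(ii)$ on the whole support of $\phi_{\rho}$, which requires $B_{2\rho}(x_{i})\cap\Lambda_{\e}=\emptyset$, i.e. $x_{i}\notin\overline{\Lambda_{\e}}$; your ``inside'' branch detects an atom through the energy bound $c\geq\frac{s}{N}\int_{\Lambda_{\e}}\psi_{\rho}|u_{n}|^{\2}\,dx$ and the lower semicontinuity of the limit measure on the open set $\Lambda_{\e}$, which only sees atoms located in $\Lambda_{\e}$ itself. An atom sitting on $\partial\Lambda_{\e}$ is caught by neither branch, so ``$I=\emptyset$'' does not follow, and with such an atom the asserted global $L^{\2}$-convergence of $|u_{n}|$ would fail; atoms strictly outside $\overline{\Lambda_{\e}}$ are indeed excluded by your argument, but that is not enough for the conclusion as you draw it.

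The paper organizes the proof so that atoms outside $\Lambda_{\e}$ never need to be excluded: it proves only that no atom lies in $\Lambda_{\e}$ and then splits $\int_{\R^{N}}g_{\e}(x,|u_{n}|^{2})|u_{n}|^{2}\,dx$ into the exterior of a large ball (handled by the tail estimate \eqref{3.3-1critico} together with the growth assumptions and the Sobolev embedding, giving \eqref{3.6-1critico}), the bounded set $B_{R}\cap(\R^{N}\setminus\Lambda_{\e})$, where by the very definition of $g$ the integrand is dominated by a subcritical term plus a constant plus $\frac{V_{0}}{k}|u_{n}|^{2}$, so local compactness and dominated convergence apply, and $B_{R}\cap\Lambda_{\e}$, where only \eqref{ioooocritico} is needed. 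To complete your argument you should do the same: aim only at the convergence of $\int_{\Lambda_{\e}}|u_{n}|^{\2}\,dx$ and treat $\R^{N}\setminus\Lambda_{\e}$ through the penalization bound, rather than through $I=\emptyset$ and strong $L^{\2}(\R^{N})$ convergence. A further minor point: even granting no atoms, passing from weak-$*$ convergence of $|u_{n}|^{\2}$ to strong $L^{\2}$ convergence requires a tail control on the critical power at infinity; the $H^{s}$-type tightness you quote gives this only after combining it, as the paper does, with $(k_2)$, $(h_1)$--$(h_2)$ and the Sobolev embedding.
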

\begin{proof}
Let $(u_{n})\subset \h$ be a $(PS)_{c}$ sequence. We note that $(u_{n})$ is bounded because using $(k_3)$ we have
\begin{align*}
c+o_{n}(1)\|u_{n}\|_{\e}&\geq J_{\e}(u_{n})-\frac{1}{\theta}\langle J'_{\e}(u_{n}), u_{n}\rangle \\
&\geq \left(\frac{1}{2}-\frac{1}{\theta}\right)\|u_{n}\|^{2}_{\e}+\frac{1}{\theta}\int_{\R^{N}\setminus \Lambda_{\e}} \left[g_{\e}(x, |u_{n}|^{2})|u_{n}|^{2}-\frac{\theta}{2} G_{\e}(x, |u_{n}|^{2}) \right]\, dx\\
&\geq \left(\frac{\theta-2}{2\theta}\right) \left(1-\frac{1}{k}\right)\|u_{n}\|^{2}_{\e},
\end{align*}
and recalling that $k>1$ we get the thesis.
Then we may assume that $u_{n}\rightharpoonup u$ in $\h$. \\
Since $\langle J'_{\e}(u_{n}), u_{n}\rangle=o_{n}(1)$, we can see that
\begin{align}\label{ionewcritico}
\|u_{n}\|^{2}_{\e}=\int_{\R^{N}} g_{\e}(x, |u_{n}|^{2})|u_{n}|^{2}\, dx+o_{n}(1).
\end{align}
On the other hand, standard calculations show that $u$ is a critical point of $J_{\e}$ and it holds
\begin{align}\label{iocritico}
\|u\|^{2}_{\e}=\int_{\R^{N}} g_{\e}(x, |u|^{2})|u|^{2}\, dx.
\end{align}
Now we aim to show that $(u_{n})$ strongly converges to $u$ in $\h$. \\
In order to achieve our purpose, it is enough to show that $\|u_{n}\|_{\e}\rightarrow \|u\|_{\e}$, that in view of \eqref{ionewcritico} and \eqref{iocritico}, it means to prove that
\begin{equation}\label{ciucciocritico}
\int_{\R^{N}} g_{\e}(x, |u_{n}|^{2})|u_{n}|^{2} \, dx\rightarrow \int_{\R^{N}} g_{\e}(x, |u|^{2})|u|^{2}\, dx.
\end{equation}
We begin proving that for each $\delta>0$ there exists $R=R_{\delta}>0$ such that
\begin{equation}\label{3.3-1critico}
\limsup_{n\rightarrow \infty} \int_{\R^{N}\setminus B_{R}} \int_{\R^{N}}  \frac{|u_{n}(x)-u_{n}(y)e^{\imath A_{\e}(\frac{x+y}{2})\cdot (x-y)}|^{2}}{|x-y|^{N+2s}} dx dy+ \int_{\R^{N}\setminus B_{R}}V_{\e}(x)|u_{n}|^{2} \, dx \leq \delta. 
\end{equation} 
Let $\eta_{R}$ be a cut-off function such that $\eta_{R}=0$ on $B_{R}$, $\eta_{R}=1$ on $\R^{N} \setminus B_{2R}$, $0\leq \eta\leq 1$ and $|\nabla \eta_{R}|\leq \frac{c}{R}$. 
Suppose that $R$ is chosen so that $\Lambda_{\e} \subset B_{R}$.
Since $(u_{n})$ is a bounded (PS) sequence, we have
\begin{equation*}
\langle J_{\e}'(u_{n}), \eta_{R} u_{n}  \rangle = o_{n}(1). 
\end{equation*}
Hence, from $(k_3)$-$(ii)$, we get
\begin{align*}
&\iint_{\R^{2N}} \eta_{R}(x) \frac{|u_{n}(x)-u_{n}(y)e^{\imath A_{\e}(\frac{x+y}{2})\cdot (x-y)}|^{2}}{|x-y|^{N+2s}}  dx dy\\
&\quad +\Re\left(\iint_{\R^{2N}} \frac{(\eta_{R}(x)-\eta_{R}(y))(u_{n}(x)-u_{n}(y)e^{\imath A_{\e}(\frac{x+y}{2})\cdot (x-y)})}{|x-y|^{N+2s}} \overline{u_{n}(y)}e^{-\imath A_{\e}(\frac{x+y}{2})\cdot (x-y)} dx dy\right)\\
& \quad + \int_{\R^{N}} V_{\e}(x) |u_{n}|^{2} \eta_{R} \, dx \\
&=\int_{\R^{N}} g_{\e}(x, |u_{n}|^{2})\eta_{R}|u_{n}|^{2} + o_{n}(1)\leq \frac{1}{k} \int_{\R^{N}\setminus \Lambda_{\e}} V_{\e}(x)|u_{n}|^{2} \, dx + o_{n}(1), 
\end{align*}
which implies that
\begin{align}\begin{split}\label{jtcritico}
& \int_{\R^{N}\setminus B_{R}} \int_{\R^{N}}  \frac{|u_{n}(x)-u_{n}(y)e^{\imath A_{\e}(\frac{x+y}{2})\cdot (x-y)}|^{2}}{|x-y|^{N+2s}} dx dy+ \left(1-\frac{1}{k}\right) \int_{\R^{N}\setminus B_{R}} V_{\e}(x) |u_{n}|^{2} \, dx \\
&\leq -\Re\left(\iint_{\R^{2N}} \frac{(\eta_{R}(x)-\eta_{R}(y))(u_{n}(x)-u_{n}(y)e^{\imath A_{\e}(\frac{x+y}{2})\cdot (x-y)})}{|x-y|^{N+2s}} \overline{u_{n}(y)}e^{-\imath A_{\e}(\frac{x+y}{2})\cdot (x-y)} dx dy\right)+ o_{n}(1).
\end{split}\end{align}
Using the H\"older inequality and the boundedness of $(u_{n})$ in $\h$ we have that
\begin{align*}
&\left| \Re\left(\iint_{\R^{2N}} \frac{(\eta_{R}(x)-\eta_{R}(y))(u_{n}(x)-u_{n}(y)e^{\imath A_{\e}(\frac{x+y}{2})\cdot (x-y)})}{|x-y|^{N+2s}} \overline{u_{n}(y)}e^{-\imath A_{\e}(\frac{x+y}{2})\cdot (x-y)} dx dy\right) \right| \\
&\leq C  \left( \iint_{\R^{2N}}\frac{|\eta_{R}(x)-\eta_{R}(y)|^{2}}{|x-y|^{N+2s}} |u_{n}(y)|^{2} dx dy\right)^{\frac{1}{2}}.
\end{align*} 
Arguing as in the proof of Lemma \ref{PSc} we deduce that
\begin{align}\label{JTcritico}
\limsup_{R\rightarrow \infty} \limsup_{n\rightarrow \infty} \iint_{\R^{2N}}\frac{|\eta_{R}(x)-\eta_{R}(y)|^{2}}{|x-y|^{N+2s}} |u_{n}(x)|^{2} dx dy=0.
\end{align}
Putting together \eqref{jtcritico} and \eqref{JTcritico} we can infer that  \eqref{3.3-1critico} holds.\\
Now, using \eqref{3.3-1critico}, $(k_2)$, $(h_1)$, $(h_2)$ and $H^{s}(\R^{N}, \R)\subset L^{r}(\R^{N}, \R)$ for any $r\in [2, \2]$, we obtain that 
\begin{equation}\label{3.6-1critico}
\int_{\R^{N} \setminus B_{R}} g_{\e}(x, |u_{n}|^{2})|u_{n}|^{2}\, dx \leq \frac{\delta}{4}, 
\end{equation}
for any $n$ big enough.
On the other hand, choosing $R$ large enough, we may assume that
\begin{equation}\label{3.6-2critico}
\int_{\R^{N} \setminus B_{R}} g_{\e}(x, |u|^{2})|u|^{2}\, dx \leq \frac{\delta}{4}. 
\end{equation}
From the arbitrariness of $\delta>0$, we can see that \eqref{3.6-1critico} and \eqref{3.6-2critico} yield
\begin{align}\label{ioo} 
\int_{\R^{N} \setminus B_{R}} &g_{\e}(x, |u_{n}|^{2})|u_{n}|^{2}\, dx \rightarrow \int_{\R^{N} \setminus B_{R}} g_{\e} (x, |u|^{2})|u|^{2}\, dx
\end{align}
as $n\rightarrow \infty$.
Using the definition of $g$ it follows that
$$
g_{\e}(x, |u_{n}|^{2})|u_{n}|^{2}\leq f(|u_{n}|^{2})|u_{n}|^{2}+a^{\2}+\frac{V_{0}}{k}|u_{n}|^{2} \mbox{ for any } x\in \R^{N}\setminus \Lambda_{\e}.
$$
Since $B_{R}\cap (\R^{N}\setminus \Lambda_{\e})$ is bounded, we can use the above estimate, $(h_1)$, $(h_2)$, Theorem \ref{Sembedding} and the Dominated Convergence Theorem to infer that, as $n\rightarrow \infty$, 
\begin{align}\label{iooocritico}
\int_{B_{R}\cap (\R^{N}\setminus \Lambda_{\e})} &g_{\e}(x, |u_{n}|^{2})|u_{n}|^{2}\, dx \rightarrow \int_{B_{R}\cap (\R^{N}\setminus \Lambda_{\e})} g_{\e}(x, |u|^{2})|u|^{2}\, dx. 
\end{align}
At this point, we aim to show that
\begin{equation}\label{ioooocritico}
\lim_{n\rightarrow \infty}\int_{\Lambda_{\e}} |u_{n}|^{\2} \,dx=\int_{\Lambda_{\e}} |u|^{\2} \,dx.
\end{equation}
Indeed, if we assume that \eqref{ioooocritico} is true, from $(k_2)$, $(h_1)$, $(h_2)$, Theorem \ref{Sembedding} and the Dominated Convergence Theorem we deduce that
\begin{equation}\label{i5critico}
\int_{B_{R}\cap \Lambda_{\e}} g_{\e}(x, |u_{n}|^{2})|u_{n}|^{2}\, dx\rightarrow \int_{B_{R}\cap \Lambda_{\e}} g_{\e}(x, |u|^{2})|u|^{2}\, dx.
\end{equation}
Putting together \eqref{ioo},\eqref{iooocritico} and \eqref{i5critico} we can conclude that \eqref{ciucciocritico} holds. \\
In what follows we prove that \eqref{ioooocritico} is satisfied. From \eqref{3.3-1critico} and Lemma \ref{DI}, we can see that $(|u_{n}|)$ is tight in $H^{s}(\R^{N}, \R)$ and so, 
by Lemma \ref{CCL}, we can find an at most countable index set $I$, sequences $(x_{i})_{i\in I}\subset \R^{N}$, $(\mu_{i})_{i\in I}, (\nu_{i})_{i\in I}\subset (0, \infty)$ such that 
\begin{align}\label{CML}
&\mu\geq |(-\Delta)^{\frac{s}{2}}|u||^{2}+\sum_{i\in I} \mu_{i} \delta_{x_{i}}, \nonumber \\
&\nu=|u|^{\2}+\sum_{i\in I} \nu_{i} \delta_{x_{i}} \quad \mbox{ and } \quad S_{*} \nu_{i}^{\frac{2}{2^{*}_{s}}}\leq \mu_{i} \quad \forall i\in I,
\end{align}
where $\delta_{x_{i}}$ is the Dirac mass at the point $x_{i}$.
Let us show that $(x_{i})_{i\in I}\cap \Lambda_{\e}=\emptyset$. Assume by contradiction that 
$x_{i}\in \Lambda_{\e}$ for some $i\in I$. For any $\rho>0$, we define $\psi_{\rho}(x)=\psi(\frac{x-x_{i}}{\rho})$ where $\psi\in C^{\infty}_{c}(\R^{N}, [0, 1])$ is such that $\psi=1$ in $B_{1}$, $\psi=0$ in $\R^{N}\setminus B_{2}$ and $\|\nabla \psi\|_{L^{\infty}(\R^{N})}\leq 2$. We suppose that  $\rho>0$ is such that $\supp(\psi_{\rho})\subset \Lambda_{\e}$. Since $(\psi_{\rho} u_{n})$ is bounded in $\h$, we can see that $\langle J'_{\e}(u_{n}),\psi_{\rho} u_{n}\rangle=o_{n}(1)$, and  using the pointwise diamagnetic inequality in Lemma \ref{DI} we get
\begin{align}\label{amicicritico}
\iint_{\R^{2N}} &\psi_{\rho}(y)\frac{||u_{n}(x)|-|u_{n}(y)||^{2}}{|x-y|^{N+2s}} \, dx dy \nonumber\\
&\leq -\Re\left(\iint_{\R^{2N}} \frac{(\psi_{\rho}(x)-\psi_{\rho}(y))(u_{n}(x)-u_{n}(y)e^{\imath A_{\e}(\frac{x+y}{2})\cdot (x-y)})}{|x-y|^{N+2s}} \overline{u_{n}(y)}e^{-\imath A_{\e}(\frac{x+y}{2})\cdot (x-y)} dx dy\right) \nonumber\\
&\quad +\int_{\R^{N}} \psi_{\rho} f(|u_{n}|^{2})|u_{n}|^{2}\, dx+\int_{\R^{N}} \psi_{\rho} |u_{n}|^{\2}\, dx+o_{n}(1).
\end{align}
Due to the fact that $f$ has subcritical growth and $\psi_{\rho}$ has compact support, we have that
\begin{align}\label{Alessia1critico}
\lim_{\rho\rightarrow 0} &\lim_{n\rightarrow \infty} \int_{\R^{N}}  \psi_{\rho} f(|u_{n}|^{2})|u_{n}|^{2}\, dx=\lim_{\rho\rightarrow 0} \int_{\R^{N}}  \psi_{\rho} f(|u|^{2})|u|^{2}\, dx=0.
\end{align}
Now we show that
\begin{equation}\label{niocritico}
\lim_{\rho\rightarrow 0}\lim_{n\rightarrow \infty} \Re\left(\iint_{\R^{2N}} \frac{(\psi_{\rho}(x)-\psi_{\rho}(y))(u_{n}(x)-u_{n}(y)e^{\imath A_{\e}(\frac{x+y}{2})\cdot (x-y)})}{|x-y|^{N+2s}} \overline{u_{n}(y)}e^{-\imath A_{\e}(\frac{x+y}{2})\cdot (x-y)} dx dy\right)=0.
\end{equation}
Using the H\"older inequality and the fact that $(u_{n})$ is bounded in $\h$, we can see that
\begin{align*}
&\left|\Re\left(\iint_{\R^{2N}} \frac{(\psi_{\rho}(x)-\psi_{\rho}(y))(u_{n}(x)-u_{n}(y)e^{\imath A_{\e}(\frac{x+y}{2})\cdot (x-y)})}{|x-y|^{N+2s}} \overline{u_{n}(y)}e^{-\imath A_{\e}(\frac{x+y}{2})\cdot (x-y)} dx dy\right) \right|  \\
&\leq C\left(\iint_{\R^{2N}} |u_{n}(y)|^{2} \frac{|\psi_{\rho}(x)-\psi_{\rho}(y)|^{2}}{|x-y|^{N+2s}} \, dx dy \right)^{\frac{1}{2}}.
\end{align*}
Therefore, it is enough to verify that
\begin{equation}\label{NIOOcritico}
\lim_{\rho\rightarrow 0}\lim_{n\rightarrow \infty}  \iint_{\R^{2N}} |u_{n}(x)|^{2} \frac{|\psi_{\rho}(x)-\psi_{\rho}(y)|^{2}}{|x-y|^{N+2s}} \, dx dy =0
\end{equation}
to deduce that  \eqref{niocritico} holds. \\
Firstly, we write $\R^{2N}$  as 
\begin{align*}
\R^{2N}&=((\R^{N}\setminus B_{2\rho}(x_{i}))\times (\R^{N} \setminus B_{2\rho}(x_{i})))\cup (B_{2\rho}(x_{i})\times \R^{N}) \cup ((\R^{N}\setminus B_{2\rho}(x_{i}))\times B_{2\rho}(x_{i}))\\
&=: X^{1}_{\rho}\cup X^{2}_{\rho} \cup X^{3}_{\rho}.
\end{align*}
Accordingly,
\begin{align}\label{Pa1critico}
&\iint_{\R^{2N}}  |u_{n}(x)|^{2} \frac{(\psi_{\rho}(x)-\psi_{\rho}(y))^{2}}{|x-y|^{N+2s}} \, dx dy \nonumber\\
&=\iint_{X^{1}_{\rho}}  |u_{n}(x)|^{2}\frac{|\psi_{\rho}(x)-\psi_{\rho}(y)|^{2}}{|x-y|^{N+2s}} \, dx dy +\iint_{X^{2}_{\rho}}  |u_{n}(x)|^{2}\frac{|\psi_{\rho}(x)-\psi_{\rho}(y)|^{2}}{|x-y|^{N+2s}} \, dx dy \nonumber\\
&\quad+ \iint_{X^{3}_{\rho}}  |u_{n}(x)|^{2}\frac{|\psi_{\rho}(x)-\psi_{\rho}(y)|^{2}}{|x-y|^{N+2s}} \, dx dy.
\end{align}
In what follows, we estimate each integral in (\ref{Pa1critico}).
Since $\psi=0$ in $\R^{N}\setminus B_{2}$, we have
\begin{align}\label{Pa2critico}
\iint_{X^{1}_{\rho}}  |u_{n}(x)|^{2}\frac{|\psi_{\rho}(x)-\psi_{\rho}(y)|^{2}}{|x-y|^{N+2s}} \, dx dy=0.
\end{align}
Since $0\leq \psi\leq 1$ and $\|\nabla \psi_{\rho}\|_{L^{\infty}(\R^{N})}\leq C/\rho$, we obtain
\begin{align}\label{Pa3critico}
&\iint_{X^{2}_{\rho}}  |u_{n}(x)|^{2}\frac{|\psi_{\rho}(x)-\psi_{\rho}(y)|^{2}}{|x-y|^{N+2s}} \, dx dy \nonumber\\
&=\int_{B_{2\rho}(x_{i})} \,dx \int_{\{y\in \R^{N}: |x-y|\leq \rho\}}  |u_{n}(x)|^{2}\frac{|\psi_{\rho}(x)-\psi_{\rho}(y)|^{2}}{|x-y|^{N+2s}} \, dy \nonumber \\
&\quad +\int_{B_{2\rho}(x_{i})} \, dx \int_{\{y\in \R^{N}: |x-y|> \rho\}} |u_{n}(x)|^{2}\frac{|\psi_{\rho}(x)-\psi_{\rho}(y)|^{2}}{|x-y|^{N+2s}} \, dy  \nonumber\\
&\leq C\rho^{-2} \int_{B_{2\rho}(x_{i})} \, dx \int_{\{y\in \R^{N}: |x-y|\leq \rho\}} \frac{ |u_{n}(x)|^{2}}{|x-y|^{N+2s-2}} \, dy\nonumber \\
&\quad + C \int_{B_{2\rho}(x_{i})} \, dx \int_{\{y\in \R^{N}: |x-y|> \rho\}} \frac{ |u_{n}(x)|^{2}}{|x-y|^{N+2s}} \, dy \nonumber\\
&\leq C \rho^{-2s} \int_{B_{2\rho}(x_{i})}  |u_{n}(x)|^{2} \, dx+C \rho^{-2s} \int_{B_{2\rho}(x_{i})}  |u_{n}(x)|^{2} \, dx =C \rho^{-2s} \int_{B_{2\rho}(x_{i})}  |u_{n}(x)|^{2} \, dx,
\end{align}
for some $C>0$ independent of $n$.
On the other hand, 
\begin{align}\label{Pa4critico}
&\iint_{X^{3}_{\rho}}  |u_{n}(x)|^{2}\frac{|\psi_{\rho}(x)-\psi_{\rho}(y)|^{2}}{|x-y|^{N+2s}} \, dx dy \nonumber\\
&=\int_{\R^{N}\setminus B_{2\rho}(x_{i})} \, dx \int_{\{y\in B_{2\rho}(x_{i}): |x-y|\leq \rho\}}  |u_{n}(x)|^{2}\frac{|\psi_{\rho}(x)-\psi_{\rho}(y)|^{2}}{|x-y|^{N+2s}} \, dy \nonumber\\
&\quad+\int_{\R^{N}\setminus B_{2\rho}(x_{i})} \,dx \int_{\{y\in B_{2\rho}(x_{i}): |x-y|> \rho\}}  |u_{n}(x)|^{2}\frac{|\psi_{\rho}(x)-\psi_{\rho}(y)|^{2}}{|x-y|^{N+2s}} \, dy=: A_{\rho, n}+ B_{\rho, n}. 
\end{align}
Let us note that $|x-y|<\rho$ and $|y-x_{i}|<2\rho$ imply $|x-x_{i}|<3\rho$, and then
\begin{align}\label{Pa5critico}
A_{\rho, n}&\leq C\rho^{-2}  \int_{B_{3\rho}(x_{i})} \, dx \int_{\{y\in B_{2\rho}(x_{i}): |x-y|\leq \rho\}} \frac{ |u_{n}(x)|^{2}}{|x-y|^{N+2s-2}} \, dy \nonumber\\
&\leq C\rho^{-2}  \int_{B_{3\rho}(x_{i})}  |u_{n}(x)|^{2} \, dx \int_{\{z\in \R^{N}: |z|\leq \rho\}} \frac{1}{|z|^{N+2s-2}} \, dz \nonumber\\
&=C \rho^{-2s} \int_{B_{3\rho}(x_{i})}  |u_{n}(x)|^{2} \, dx.
\end{align}
Let us observe that  for all $K>4$ it holds 
$$
(\R^{N}\setminus B_{2\rho}(x_{i}))\times B_{2\rho}(x_{i}) \subset (B_{K\rho}(x_{i})\times B_{2\rho}(x_{i})) \cup ((\R^{N}\setminus B_{K\rho}(x_{i}))\times B_{2\rho}(x_{i})).
$$
Therefore, we can see that
\begin{align}\label{Pa6critico}
&\int_{B_{K\rho}(x_{i})} \, dx \int_{\{y\in B_{2\rho}(x_{i}): |x-y|> \rho\}}  |u_{n}(x)|^{2}\frac{|\psi_{\rho}(x)-\psi_{\rho}(y)|^{2}}{|x-y|^{N+2s}} \, dy \nonumber\\
&\leq C\int_{B_{K\rho}(x_{i})} \, dx \int_{\{y\in B_{2\rho}(x_{i}): |x-y|> \rho\}}  \frac{|u_{n}(x)|^{2}}{|x-y|^{N+2s}} \, dy \nonumber \\
&\leq C  \int_{B_{K\rho}(x_{i})}  |u_{n}(x)|^{2} \, dx \int_{\{z\in \R^{N}: |z|> \rho\}} \frac{1}{|z|^{N+2s}} \, dz \nonumber\\
&= C \rho^{-2s} \int_{B_{K\rho}(x_{i})}  |u_{n}(x)|^{2} \, dx.
\end{align}

On the other hand, if $|x-x_{i}|\geq K\rho$ and $|y-x_{i}|<2\rho$ then 
$$
|x-y|\geq |x-x_{i}|-|y-x_{i}|\geq \frac{|x-x_{i}|}{2}+\frac{K\rho}{2}-2\rho>\frac{|x-x_{i}|}{2}.
$$
Consequently,
\begin{align}\label{Pa7critico}
&\int_{\R^{N}\setminus B_{K\rho}(x_{i})} \, dx \int_{\{y\in B_{2\rho}(x_{i}): |x-y|>\rho\}}  |u_{n}(x)|^{2} \frac{|\psi_{\rho}(x)-\psi_{\rho}(y)|^{2}}{|x-y|^{N+2s}} \, dy \nonumber\\
&\leq  C \int_{\R^{N}\setminus B_{K\rho}(x_{i})} \, dx \int_{\{y\in B_{2\rho}(x_{i}): |x-y|>\rho\}} \frac{ |u_{n}(x)|^{2}}{|x-x_{i}|^{N+2s}} \, dy \nonumber\\
&\leq C \rho^{N} \int_{\R^{N}\setminus B_{K\rho}(x_{i})} \frac{ |u_{n}(x)|^{2}}{|x-x_{i}|^{N+2s}} \, dx \nonumber\\
&\leq C\! \rho^{N}\!\left(\int_{\R^{N}\setminus B_{K\rho}(x_{i})} \!\!\! |u_{n}(x)|^{\2} \, dx\right)^{\frac{2}{2^{*}_{s}}}\!\! \left(\int_{\R^{N}\setminus B_{K\rho}(x_{i})} \!\!\!|x-x_{i}|^{-(N+2s)\frac{2^{*}_{s}}{2^{*}_{s}-2}} \, dx\right)^{\frac{2^{*}_{s}-2}{2^{*}_{s}}} \nonumber\\
&\leq C K^{-N} \left(\int_{\R^{N}\setminus B_{K\rho}(x_{i})}  |u_{n}(x)|^{\2} \, dx\right)^{\frac{2}{2^{*}_{s}}}.
\end{align}
Putting together (\ref{Pa6critico}) and (\ref{Pa7critico}), and using the fact that $(|u_{n}|)$ is bounded in $L^{2^{*}_{s}}(\R^{N}, \R)$, we can find $C>0$  independent of $n$ such that 
\begin{align}\label{Pa8critico}
B_{\rho, n}\leq C \rho^{-2s} \int_{B_{K\rho}(x_{i})}  |u_{n}(x)|^{2} \, dx+C K^{-N}.
\end{align}
Then, (\ref{Pa1critico})-(\ref{Pa5critico}) and (\ref{Pa8critico}) yield
\begin{align}\label{stimacritico}
\iint_{\R^{2N}}  |u_{n}(x)|^{2}\frac{|\psi_{\rho}(x)-\psi_{\rho}(y)|^{2}}{|x-y|^{N+2s}} \, dx dy \leq C \rho^{-2s} \int_{B_{K\rho}(x_{i})}  |u_{n}(x)|^{2} \, dx+C K^{-N}.
\end{align}
Recalling that $|u_{n}|\rightarrow |u|$ strongly in $L^{2}_{loc}(\R^{N}, \R)$ we have
\begin{align*}
\lim_{n\rightarrow \infty}  C \rho^{-2s} \int_{B_{K\rho}(x_{i})}  |u_{n}(x)|^{2} \, dx+C K^{-N} =C \rho^{-2s} \int_{B_{K\rho}(x_{i})} |u(x)|^{2} \, dx+C K^{-N}.
\end{align*}
Using the H\"older inequality we can see that
\begin{align*}
C \rho^{-2s} & \int_{B_{K\rho}(x_{i})}  |u(x)|^{2} \, dx+C K^{-N} \\
&\leq C \rho^{-2s} \left(\int_{B_{K\rho}(x_{i})}  |u(x)|^{2^{*}_{s}} \, dx\right)^{\frac{2}{2^{*}_{s}}} |B_{K\rho}(x_{i})|^{1-\frac{2}{2^{*}_{s}}}+C K^{-N} \\
&\leq C K^{2s}  \left(\int_{B_{K\rho}(x_{i})}  |u(x)|^{2^{*}_{s}} \, dx\right)^{\frac{2}{2^{*}_{s}}}+C K^{-N}\rightarrow C K^{-N} \mbox{ as } \rho\rightarrow 0.
\end{align*}
Hence, 
\begin{align*}
&\lim_{\rho\rightarrow 0} \limsup_{n\rightarrow \infty} \iint_{\R^{2N}}  |u_{n}(x)|^{2}\frac{|\psi_{\rho}(x)-\psi_{\rho}(y)|^{2}}{|x-y|^{N+2s}} \, dx dy
 \nonumber\\
&=\lim_{K\rightarrow \infty}\lim_{\rho\rightarrow 0} \limsup_{n\rightarrow \infty} \iint_{\R^{2N}}  |u_{n}(x)|^{2}\frac{|\psi_{\rho}(x)-\psi_{\rho}(y)|^{2}}{|x-y|^{N+2s}} \, dx dy =0,
\end{align*}
that is \eqref{NIOOcritico} holds.
Therefore, using \eqref{CML} and taking the limit as $n\rightarrow \infty$ and $\rho\rightarrow 0$ in \eqref{amicicritico}, we can deduce that \eqref{Alessia1critico} and \eqref{niocritico} yield $\nu_{i}\geq \mu_{i}$.
From the last statement in \eqref{CML} it follows that $\nu_{i}\geq S_{*}^{\frac{2}{2^{*}_{s}}}$, and using $(h_4)$ and $(k_3)$ we get
\begin{align*}
c&=J_{\e}(u_{n})-\frac{1}{2}\langle J'_{\e}(u_{n}), u_{n}\rangle+o_{n}(1) \\
&=\int_{\R^{N}\setminus \Lambda_{\e}} \frac{1}{2}\left[ g_{\e}(x, |u_{n}|^{2})|u_{n}|^{2}-G_{\e}(x, |u_{n}|^{2})\right] \, dx +\int_{\Lambda_{\e}} \frac{1}{2}\left[f(|u_{n}|^{2})|u_{n}|^{2}-F(|u_{n}|^{2})\right] \, dx \\
&\quad+\frac{s}{N}\int_{\Lambda_{\e}} |u_{n}|^{\2} \, dx+o_{n}(1) \\
&\geq \frac{s}{N}\int_{\Lambda_{\e}} |u_{n}|^{\2} \, dx+o_{n}(1) \\
&\geq \frac{s}{N} \int_{\Lambda_{\e}} \psi_{\rho} |u_{n}|^{\2} \, dx+o_{n}(1).
\end{align*}
Then, using \eqref{CML} and taking the limit as $n\rightarrow \infty$ we find
\begin{align*}
c\geq \frac{s}{N} \sum_{\{i\in I: x_{i}\in \Lambda_{\e}\}} \psi_{\rho}(x_{i}) \nu_{i} =\frac{s}{N} \sum_{\{i\in I: x_{i}\in \Lambda_{\e}\}} \nu_{i} \geq \frac{s}{N} S^{\frac{N}{2s}}_{*},
\end{align*}
which gives a contradiction. This ends the proof of \eqref{ioooocritico}.
\end{proof}

\noindent
Let us define the mountain pass level
$$
c_{\e}=\inf_{\gamma\in \Gamma_{\e}} \max_{t\in [0, 1]} J_{\e}(\gamma(t))
$$
where
$$
\Gamma_{\e}=\{\gamma\in C([0, 1], \h): \gamma(0)=0 \mbox{ and } J_{\e}(\gamma(1))<0\}.
$$
We also denote by $c_{0}$ the mountain pass level associated with $I_{0}$.

Let $w\in H^{s}(\R^{N}, \R)$ be a positive ground state solution for \eqref{APecritico} such that $I'_{0}(w)=0$ and $I_{0}(w)=c_{0}<\frac{s}{N} S_{*}^{\frac{N}{2s}}$ (see \cite{HZ}). Since any solution of \eqref{APecritico} belongs to $C^{0, \alpha}(\R^{N}, \R)\cap L^{2^{*}_{s}}(\R^{N}, \R)$, we know that it goes to zero at infinity. Then we can proceed as in \cite{FQT} to see that the following estimate holds
\begin{equation}\label{remdecay}
0<w(x)\leq \frac{C}{|x|^{N+2s}} \mbox{ for all } |x|>1.
\end{equation}
Arguing as in the proof of Lemma \ref{AMlem1} we can see that the following result holds:
\begin{lem}\label{AMlem1critico}
The numbers $c_{\e}$ and $c_{0}$ satisfy the following relation
$$
\limsup_{\e\rightarrow 0} c_{\e}\leq c_{0}<\frac{s}{N} S_{*}^{\frac{N}{2s}}.
$$
\end{lem}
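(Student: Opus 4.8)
The plan is to follow the proof of Lemma \ref{AMlem1} line by line, inserting the critical term where it appears. Let $w\in H^{s}(\R^{N},\R)$ be the positive ground state of \eqref{APecritico} given by \cite{HZ}, so that $I_{0}'(w)=0$, $I_{0}(w)=c_{0}<\frac{s}{N}S_{*}^{\frac{N}{2s}}$, and $w$ obeys the decay estimate \eqref{remdecay}. Fix $\eta\in C^{\infty}_{c}(\R^{N},[0,1])$ with $\eta\equiv1$ on $B_{\delta/2}$ and $\supp(\eta)\subset B_{\delta}\subset\Lambda$, set $\eta_{\e}(x)=\eta(\e x)$ and $w_{\e}(x):=\eta_{\e}(x)w(x)e^{\imath A(0)\cdot x}$; by Lemma \ref{aux}, $w_{\e}\in\h$ and $|w_{\e}|=\eta_{\e}w$.

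The first step will be to prove $\lim_{\e\to0}\|w_{\e}\|^{2}_{\e}=\|w\|^{2}_{0}\in(0,\infty)$. Since $\int_{\R^{N}}V_{\e}(x)|w_{\e}|^{2}\,dx\to\int_{\R^{N}}V_{0}w^{2}\,dx$ by dominated convergence, it remains to show $[w_{\e}]^{2}_{A_{\e}}\to[w]^{2}$. Exactly as in Lemma \ref{AMlem1} one writes $[w_{\e}]^{2}_{A_{\e}}=[\eta_{\e}w]^{2}+X_{\e}+2Y_{\e}$, with $|Y_{\e}|\le[\eta_{\e}w]\sqrt{X_{\e}}$ and $X_{\e}$ the double integral of $\eta_{\e}^{2}(y)w^{2}(y)\,|e^{\imath[A_{\e}(\frac{x+y}{2})-A(0)]\cdot(x-y)}-1|^{2}/|x-y|^{N+2s}$; since $[\eta_{\e}w]\to[w]$ by Lemma $5$ of \cite{PP}, it suffices to check $X_{\e}\to0$. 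Splitting the inner integral over $|x-y|\ge\e^{-\beta}$ and $|x-y|<\e^{-\beta}$ for suitable $\beta\in(0,\alpha/(1+\alpha-s))$, and using $|e^{\imath t}-1|^{2}\le\min\{4,t^{2}\}$, $A\in C^{0,\alpha}(\R^{N},\R^{N})$ and the decay \eqref{remdecay}, the computation is word-for-word that of Lemma \ref{AMlem1} and yields $X_{\e}\to0$. Moreover, since $0\le|w_{\e}|=\eta_{\e}w\le w$ pointwise and $\eta_{\e}w\to w$ a.e., dominated convergence also gives $\int_{\R^{N}}|w_{\e}|^{\2}\,dx\to\int_{\R^{N}}w^{\2}\,dx$ and $\int_{\R^{N}}F(|w_{\e}|^{2})\,dx\to\int_{\R^{N}}F(w^{2})\,dx$.

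Next, since $\supp(w_{\e})\subset\Lambda_{\e}$ and $g(x,\cdot)=f(\cdot)+(\cdot)^{\frac{\2-2}{2}}$ on $\Lambda$, the function $t\mapsto\int_{\R^{N}}f(t^{2}|w_{\e}|^{2})|w_{\e}|^{2}\,dx+t^{\2-2}\int_{\R^{N}}|w_{\e}|^{\2}\,dx$ is strictly increasing from $0$ to $+\infty$ (by $(h_{4})$ and $\2>2$), so there is a unique $t_{\e}>0$ with
\begin{equation*}
\|w_{\e}\|^{2}_{\e}=\int_{\R^{N}}f(t_{\e}^{2}|w_{\e}|^{2})|w_{\e}|^{2}\,dx+t_{\e}^{\2-2}\int_{\R^{N}}|w_{\e}|^{\2}\,dx,
\end{equation*}
and $t_{\e}$ is the maximum point of $t\mapsto J_{\e}(tw_{\e})$. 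I would then show $t_{\e}\to1$: if $t_{\e}\to\infty$ along a subsequence the critical term above forces the right-hand side to $+\infty$ whereas $\|w_{\e}\|^{2}_{\e}$ stays bounded (alternatively one uses $(h_{3})$ and $\eta\equiv1$ on $B_{\delta/2}$, as in Lemma \ref{AMlem1}); if $t_{\e}\to0$ then $(h_{1})$, $(h_{2})$ send the right-hand side to $0$ while $\|w_{\e}\|^{2}_{\e}\to\|w\|^{2}_{0}>0$; so $t_{\e}\to t_{0}\in(0,\infty)$, and letting $\e\to0$ with Step $1$ gives $\|w\|^{2}_{0}=\int_{\R^{N}}f(t_{0}^{2}w^{2})w^{2}\,dx+t_{0}^{\2-2}\int_{\R^{N}}w^{\2}\,dx$. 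Subtracting the identity $\|w\|^{2}_{0}=\int_{\R^{N}}f(w^{2})w^{2}\,dx+\int_{\R^{N}}w^{\2}\,dx$ coming from $I_{0}'(w)=0$, and using that $f$ is increasing and $\2>2$, we get $t_{0}=1$. Finally, combining $t_{\e}\to1$ with the convergences of Step $1$ and dominated convergence, $J_{\e}(t_{\e}w_{\e})\to I_{0}(w)=c_{0}$; since $c_{\e}\le\max_{t\ge0}J_{\e}(tw_{\e})=J_{\e}(t_{\e}w_{\e})$, we obtain $\limsup_{\e\to0}c_{\e}\le c_{0}$, while $c_{0}<\frac{s}{N}S_{*}^{\frac{N}{2s}}$ is exactly the property of the ground state from \cite{HZ}. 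The only points beyond Lemma \ref{AMlem1} are the critical term in the Nehari identity and the extra convergence $\int|w_{\e}|^{\2}\to\int w^{\2}$; I expect the former to be the only genuinely new (and mild) obstacle, since it is what one must use to rule out $t_{\e}\to\infty$ — which in fact is immediate because that term blows up.
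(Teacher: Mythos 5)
Your proposal is correct and follows essentially the same route as the paper, which proves this lemma simply by "arguing as in the proof of Lemma \ref{AMlem1}" with the ground state $w$ of \eqref{APecritico} from \cite{HZ} satisfying $I_{0}(w)=c_{0}<\frac{s}{N}S_{*}^{\frac{N}{2s}}$ and the decay \eqref{remdecay}. Your handling of the extra critical term in the Nehari identity (to rule out $t_{\e}\rightarrow\infty$ and to force $t_{0}=1$) is exactly the adaptation the paper intends, so there is nothing to add.
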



\noindent
Let us recall the following result for the autonomous problem \eqref{APecritico} (see \cite{HZ}).
\begin{lem}\label{FScritico}
Let $(u_{n})\subset \mathcal{N}_{0}$ be a sequence satisfying $I_{0}(u_{n})\rightarrow c<\frac{s}{N} S_{*}^{\frac{N}{2s}}$. Then, up to subsequences, one of the following alternatives holds:
\begin{compactenum}[(i)]
\item $(u_{n})$ strongly converges in $H^{s}(\R^{N}, \R)$, 
\item there exists a sequence $(\tilde{y}_{n})\subset \R^{N}$ such that,  up to a subsequence, $v_{n}(x)=u_{n}(x+\tilde{y}_{n})$ strongly converges  in $H^{s}(\R^{N}, \R)$.
\end{compactenum}
\end{lem}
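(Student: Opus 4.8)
The plan is to run Lions' concentration--compactness dichotomy on the bounded sequence $(u_{n})$ and to invoke the strict energy bound $c<\frac{s}{N}S_{*}^{\frac{N}{2s}}$ at the two places where compactness can fail. \emph{Step 1 (boundedness and reduction to a Palais--Smale sequence).} From $(h_3)$ and $u_{n}\in\mathcal{N}_{0}$ one gets $c+o_{n}(1)\geq I_{0}(u_{n})-\frac{1}{\theta}\langle I_{0}'(u_{n}),u_{n}\rangle\geq\left(\frac{1}{2}-\frac{1}{\theta}\right)\|u_{n}\|_{0}^{2}$, so $(u_{n})$ is bounded in $H^{s}(\R^{N},\R)$, and $\langle I_{0}'(u_{n}),u_{n}\rangle=0$ holds along the sequence. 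Applying Ekeland's variational principle on the $C^{1}$ constraint $\mathcal{N}_{0}$ (on which $I_{0}$ is bounded below by $c_{0}>0$), we may assume in addition that $(u_{n})$ is a Palais--Smale sequence for $I_{0}$, i.e. $I_{0}'(u_{n})\to0$ in $(H^{s}(\R^{N},\R))^{*}$.

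\emph{Step 2 (vanishing is excluded).} Suppose $\lim_{n}\sup_{y\in\R^{N}}\int_{B_{R}(y)}|u_{n}|^{2}\,dx=0$ for every $R>0$. By Lemma $2.2$ in \cite{FQT}, $u_{n}\to0$ in $L^{r}(\R^{N},\R)$ for all $r\in(2,\2)$, and $(h_1)$--$(h_2)$ together with the $L^{2}$-boundedness give $\int_{\R^{N}}f(u_{n}^{2})u_{n}^{2}\,dx\to0$ and $\int_{\R^{N}}F(u_{n}^{2})\,dx\to0$. Hence, using $\langle I_{0}'(u_{n}),u_{n}\rangle=0$, $\|u_{n}\|_{0}^{2}=\int_{\R^{N}}|u_{n}|^{\2}\,dx+o_{n}(1)=:\ell+o_{n}(1)$ and $c=\lim I_{0}(u_{n})=\left(\frac{1}{2}-\frac{1}{\2}\right)\ell=\frac{s}{N}\ell$, so $\ell>0$; but the Sobolev inequality $S_{*}\|u_{n}\|_{L^{\2}}^{2}\leq[u_{n}]^{2}\leq\|u_{n}\|_{0}^{2}$ forces $S_{*}\ell^{\frac{2}{\2}}\leq\ell$, i.e. $\ell\geq S_{*}^{\frac{N}{2s}}$, whence $c=\frac{s}{N}\ell\geq\frac{s}{N}S_{*}^{\frac{N}{2s}}$, contradicting the hypothesis. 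Therefore there are $\gamma,R>0$ and $(\tilde{y}_{n})\subset\R^{N}$ with $\liminf_{n}\int_{B_{R}(\tilde{y}_{n})}|u_{n}|^{2}\,dx\geq\gamma>0$.

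\emph{Step 3 (strong convergence of the concentrated sequence).} Set $v_{n}=u_{n}(\cdot+\tilde{y}_{n})$; by translation invariance $(v_{n})$ is bounded, $I_{0}(v_{n})\to c$, $I_{0}'(v_{n})\to0$, and, up to a subsequence, $v_{n}\rightharpoonup v\not\equiv0$ in $H^{s}(\R^{N},\R)$ (by $H^{s}\hookrightarrow L^{2}_{loc}$ and the mass bound). Testing $I_{0}'(v_{n})$ against $\varphi\in C^{\infty}_{c}(\R^{N},\R)$ and using a.e.\ convergence yields $I_{0}'(v)=0$, so $v\in\mathcal{N}_{0}$ and $I_{0}(v)\geq c_{0}>0$. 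Writing $w_{n}=v_{n}-v\rightharpoonup0$, the Brezis--Lieb lemma (applied to the Gagliardo seminorm, to $\int|\cdot|^{\2}$, and to $\int F(\cdot^{2})$ and $\int f(\cdot^{2})(\cdot)^{2}$) together with the subtracted Nehari identities of $v_{n}$ and $v$ shows that $(w_{n})$ is itself a Palais--Smale sequence for $I_{0}$, at level $c-I_{0}(v)<\frac{s}{N}S_{*}^{\frac{N}{2s}}-c_{0}$, with $w_{n}\rightharpoonup0$. A standard argument (iterating the concentration--compactness alternative and counting energy, the bound $c<\frac{s}{N}S_{*}^{\frac{N}{2s}}$ preventing the loss of a Sobolev bubble at infinity; cf.\ \cite{HZ}) then gives $w_{n}\to0$, i.e. $v_{n}\to v$ in $H^{s}(\R^{N},\R)$.

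\emph{Step 4 (dichotomy and conclusion).} If $(\tilde{y}_{n})$ has a bounded subsequence, then along it $\tilde{y}_{n}\to y_{0}$ and $u_{n}=v_{n}(\cdot-\tilde{y}_{n})\to v(\cdot-y_{0})$ strongly in $H^{s}(\R^{N},\R)$, which is alternative $(i)$; otherwise $|\tilde{y}_{n}|\to\infty$ and we are in alternative $(ii)$. The main obstacle is Step 3: establishing $I_{0}'(v)=0$ despite the failure of weak sequential continuity of $I_{0}'$ caused by the critical term, carrying out the Brezis--Lieb decomposition for both the subcritical and the critical nonlinearities, and --- decisively --- using the strict inequality $c<\frac{s}{N}S_{*}^{\frac{N}{2s}}$ to rule out a Sobolev bubble escaping to infinity in the residual sequence $(w_{n})$.
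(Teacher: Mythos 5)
The paper itself does not prove this lemma: it is quoted from He and Zou \cite{HZ} (exactly as the subcritical Lemma \ref{FS} is quoted from \cite{A1,FigS}), so your argument has to stand on its own. Your Steps 2 and 4 are fine — in particular Step 2, where the strict bound $c<\frac{s}{N}S_{*}^{\frac{N}{2s}}$ together with the Nehari identity and the Sobolev inequality excludes vanishing, is correct and parallels the computation the paper performs inside Lemma \ref{prop3.3critico}. The genuine gap is in Step 3, at the decisive point you delegate to a ``standard argument'': your energy count only excludes the loss of a critical Sobolev bubble. From $I_{0}(v)\geq c_{0}$ you get that the residual Palais--Smale level $d=c-I_{0}(v)$ satisfies $d<\frac{s}{N}S_{*}^{\frac{N}{2s}}-c_{0}$, which forbids a bubble, but nothing in the hypothesis forbids $d\geq c_{0}$, i.e.\ the residual sequence $(w_{n})$ may still carry a second \emph{subcritical} bump escaping to infinity, which costs only $c_{0}$. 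Indeed, read with an arbitrary level $c<\frac{s}{N}S_{*}^{\frac{N}{2s}}$ the statement is actually false whenever $2c_{0}<\frac{s}{N}S_{*}^{\frac{N}{2s}}$ (which happens for large $C_{0}$ in $(h_2)$, since $c_{0}\rightarrow 0$ as $C_{0}\rightarrow\infty$): the projection onto $\mathcal{N}_{0}$ of $w+w(\cdot-ne_{1})$, $w$ a ground state, lies on $\mathcal{N}_{0}$, has energy tending to $2c_{0}$, and converges strongly along no sequence of translations. The lemma must therefore be read at the ground-state level $c=c_{0}$ — as the subcritical Lemma \ref{FS} is stated and as the paper applies it in Lemma \ref{prop3.3critico} — and then $d=c_{0}-I_{0}(v)\leq 0<\min\{c_{0},\frac{s}{N}S_{*}^{\frac{N}{2s}}\}$ rules out both a bump and a bubble and your Step 3 closes; as written, it does not.

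The same restriction invalidates your Step 1 for general $c$: Ekeland's principle produces a Palais--Smale sequence only from an (almost) minimizing sequence on $\mathcal{N}_{0}$, so the upgrade $I_{0}'(u_{n})\rightarrow 0$ is again only available at the level $c_{0}$; for an arbitrary level there is simply no way to manufacture it. Moreover, since $f$ is merely continuous, $\mathcal{N}_{0}$ need not be a $C^{1}$ manifold, so ``Ekeland on the $C^{1}$ constraint'' should be replaced by the Szulkin--Weth generalized Nehari reduction (which is what $(h_4)$ is for, and which does turn minimizing sequences on $\mathcal{N}_{0}$ into Palais--Smale sequences). With these two repairs — level fixed at $c_{0}$ and the Szulkin--Weth reduction — the remaining ingredients of your outline (nontrivial weak limit after translation, $I_{0}'(v)=0$ via a.e.\ convergence and the $L^{\frac{2^{*}_{s}}{2^{*}_{s}-1}}$ bound on the critical term, Brezis--Lieb splitting for both the subcritical and the critical parts) are standard and the proof goes through.
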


\begin{lem}\label{prop3.3critico}
Let $\e_{n}\rightarrow 0$ and $u_{n}\in H^{s}_{\e_{n}}$ be such that $J_{\e_{n}}(u_{n})=c_{\e_{n}}$ and $J'_{\e_{n}}(u_{n})=0$. Then there exists $(\tilde{y}_{n})\subset \R^{N}$ such that $v_{n}(x)=|u_{n}|(x+\tilde{y}_{n})$ has a convergent subsequence in $H^{s}(\R^{N}, \R)$. Moreover, up to a subsequence, $y_{n}=\e_{n} \tilde{y}_{n}\rightarrow y_{0}$ for some $y_{0}\in \Lambda$ such that $V(y_{0})=V_{0}$.
\end{lem}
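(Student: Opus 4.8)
The plan is to follow the scheme of Lemma~\ref{prop3.3}, replacing each subcritical ingredient by its critical counterpart: Lemma~\ref{AMlem1critico} in place of Lemma~\ref{AMlem1}, Lemma~\ref{FScritico} in place of Lemma~\ref{FS}, and the Concentration--Compactness Lemma~\ref{CCL} where compactness of the critical term is at stake. First I would observe that $(u_{n})$ is bounded in $H^{s}_{\e_{n}}$: arguing as in Lemma~\ref{PSccritico} with $\langle J'_{\e_{n}}(u_{n}),u_{n}\rangle=0$, $J_{\e_{n}}(u_{n})=c_{\e_{n}}$, $(k_{3})$ and $k>1$ one gets $c_{\e_{n}}\geq\bigl(\tfrac{\theta-2}{2\theta}\bigr)\bigl(1-\tfrac1k\bigr)\|u_{n}\|^{2}_{\e_{n}}$, and since $\limsup_{n}c_{\e_{n}}\leq c_{0}$ by Lemma~\ref{AMlem1critico} the claim follows; by Lemma~\ref{DI}, $(|u_{n}|)$ is then bounded in $H^{s}(\R^{N},\R)$.

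The heart of the argument, and the only genuinely new point compared with Lemma~\ref{prop3.3}, is the non-vanishing estimate $\liminf_{n\to\infty}\int_{B_{R}(\tilde y_{n})}|u_{n}|^{2}\,dx\geq\gamma>0$ for suitable $(\tilde y_{n})\subset\R^{N}$ and $R,\gamma>0$. I would argue by contradiction: if vanishing holds, Lemma~2.2 in \cite{FQT} gives $|u_{n}|\to0$ in $L^{r}(\R^{N},\R)$ for every $r\in(2,\2)$. Using that $f$ has subcritical growth, that $\tilde f(t)\leq\tfrac{V_{0}}{k}$, and the elementary bounds $\int_{\{|u_{n}|^{2}\leq a\}}|u_{n}|^{\2}\leq a^{(\2-r)/2}\int|u_{n}|^{r}$ and $\int_{\{|u_{n}|^{2}>a\}}|u_{n}|^{2}\leq a^{-(r-2)/2}\int|u_{n}|^{r}$, one checks that $\int_{\R^{N}}f(|u_{n}|^{2})|u_{n}|^{2}\,dx\to0$ and $\int_{\R^{N}\setminus\Lambda_{\e_{n}}}g_{\e_{n}}(x,|u_{n}|^{2})|u_{n}|^{2}\,dx\to0$. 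Hence from $u_{n}\in\mathcal{N}_{\e_{n}}$ one gets $\|u_{n}\|^{2}_{\e_{n}}=\int_{\Lambda_{\e_{n}}}|u_{n}|^{\2}\,dx+o_{n}(1)$, and the Nehari lower bound $\|u_{n}\|^{2}_{\e_{n}}\geq\alpha_{0}>0$ (from $(k_{1})$, $(k_{2})$) forces, up to a subsequence, $\|u_{n}\|^{2}_{\e_{n}}\to b$ and $\int_{\Lambda_{\e_{n}}}|u_{n}|^{\2}\,dx\to b$ with $b>0$. On one hand, the fractional Sobolev inequality $S_{*}\||u_{n}|\|^{2}_{L^{\2}}\leq[|u_{n}|]^{2}\leq\|u_{n}\|^{2}_{\e_{n}}$ together with $\int_{\R^{N}}|u_{n}|^{\2}\geq\int_{\Lambda_{\e_{n}}}|u_{n}|^{\2}+o_{n}(1)$ yields $b\geq S_{*}^{N/(2s)}$. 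On the other hand, writing $c_{\e_{n}}=J_{\e_{n}}(u_{n})-\tfrac12\langle J'_{\e_{n}}(u_{n}),u_{n}\rangle$ and using $(k_{3})$-$(i)$ (so that on $\Lambda_{\e_{n}}$ one has $g_{\e_{n}}(x,|u_{n}|^{2})|u_{n}|^{2}-G_{\e_{n}}(x,|u_{n}|^{2})=\bigl(f(|u_{n}|^{2})|u_{n}|^{2}-F(|u_{n}|^{2})\bigr)+\tfrac{2s}{N}|u_{n}|^{\2}\geq\tfrac{2s}{N}|u_{n}|^{\2}$) and $(k_{3})$-$(ii)$ off $\Lambda_{\e_{n}}$, one obtains $c_{\e_{n}}\geq\tfrac{s}{N}\int_{\Lambda_{\e_{n}}}|u_{n}|^{\2}\,dx$, whence $\limsup_{n}c_{\e_{n}}\geq\tfrac{s}{N}b\geq\tfrac{s}{N}S_{*}^{N/(2s)}$, contradicting Lemma~\ref{AMlem1critico}, which gives $\limsup_{n}c_{\e_{n}}\leq c_{0}<\tfrac{s}{N}S_{*}^{N/(2s)}$. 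This is the step I expect to be the main obstacle: in the critical regime the term $\int_{\Lambda_{\e_{n}}}|u_{n}|^{\2}$ does not vanish under Lions-type vanishing, and one must exploit the strict energy gap $c_{0}<\tfrac{s}{N}S_{*}^{N/(2s)}$ to rule it out.

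Granted non-vanishing, I set $v_{n}=|u_{n}|(\cdot+\tilde y_{n})$, which is bounded in $H^{s}(\R^{N},\R)$ and, by the compact local embedding, has a weak limit $v\not\equiv0$. Choosing $t_{n}>0$ with $\widetilde v_{n}=t_{n}v_{n}\in\mathcal{N}_{0}$, the diamagnetic inequality (Lemma~\ref{DI}), $V_{\e_{n}}\geq V_{0}$, and the pointwise bound $G_{\e_{n}}(x,\tau)\leq F(\tau)+\tfrac{2}{\2}\tau^{\2/2}$ for $\tau\geq0$ give $I_{0}(tv_{n})\leq J_{\e_{n}}(tu_{n})$ for all $t\geq0$, hence $c_{0}\leq I_{0}(\widetilde v_{n})\leq\max_{t\geq0}J_{\e_{n}}(tu_{n})=J_{\e_{n}}(u_{n})=c_{\e_{n}}$; combined with Lemma~\ref{AMlem1critico} this yields $I_{0}(\widetilde v_{n})\to c_{0}$. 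Since $I_{0}(\widetilde v_{n})\to c_{0}>0$ we have $\widetilde v_{n}\nrightarrow0$, so $t_{n}\to t^{*}>0$ and $\widetilde v_{n}\rightharpoonup t^{*}v=:\widetilde v\not\equiv0$. As $c_{0}<\tfrac{s}{N}S_{*}^{N/(2s)}$, Lemma~\ref{FScritico} applies, and since the weak limit $\widetilde v$ is nontrivial the vanishing/translation alternative forces strong convergence $\widetilde v_{n}\to\widetilde v$ in $H^{s}(\R^{N},\R)$ (after absorbing a bounded translation into $\tilde y_{n}$ if necessary), hence $v_{n}\to v$ in $H^{s}(\R^{N},\R)$.

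Finally, to locate $y_{n}=\e_{n}\tilde y_{n}$, I argue exactly as in Lemma~\ref{prop3.3}. If $|y_{n}|\to\infty$, pick $R$ with $\Lambda\subset B_{R}$; then $\e_{n}z+y_{n}\notin\Lambda$ for $z\in B_{R/\e_{n}}$, and using $u_{n}\in\mathcal{N}_{\e_{n}}$, $(V_{1})$, Lemma~\ref{DI}, the change of variable $x\mapsto z+\tilde y_{n}$, the bound $\tilde f(t)\leq\tfrac{V_{0}}{k}$, $(k_{2})$, and the fact that the tails $\int_{\R^{N}\setminus B_{R/\e_{n}}}\bigl(f(v_{n}^{2})v_{n}^{2}+v_{n}^{\2}\bigr)\,dz\to0$ (because $v_{n}\to v$ in $H^{s}\subset L^{2}\cap L^{\2}$ and $R/\e_{n}\to\infty$), one gets $\min\{1,V_{0}(1-\tfrac1k)\}\|v_{n}\|^{2}_{H^{s}}=o_{n}(1)$, contradicting $v\not\equiv0$. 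Hence $(y_{n})$ is bounded and, up to a subsequence, $y_{n}\to y_{0}$; the same computation with $B_{r_{0}/\e_{n}}$, where $r_{0}>0$ satisfies $B_{2r_{0}}(y_{0})\cap\Lambda=\emptyset$, rules out $y_{0}\notin\overline{\Lambda}$, so $y_{0}\in\overline{\Lambda}$. If $V(y_{0})>V_{0}$, then using Fatou's lemma (applied after passing to a.e.-convergent subsequences), translation invariance, Lemma~\ref{DI}, the bound on $G_{\e_{n}}$, and $J_{\e_{n}}(t_{n}u_{n})\leq J_{\e_{n}}(u_{n})=c_{\e_{n}}$, one derives
\[
c_{0}=I_{0}(\widetilde v)<\tfrac12[\widetilde v]^{2}+\tfrac12\int_{\R^{N}}V(y_{0})\widetilde v^{2}\,dx-\tfrac12\int_{\R^{N}}F(\widetilde v^{2})\,dx-\tfrac{1}{\2}\int_{\R^{N}}|\widetilde v|^{\2}\,dx\leq\liminf_{n\to\infty}J_{\e_{n}}(t_{n}u_{n})\leq c_{0},
\]
a contradiction; thus $V(y_{0})=V_{0}$, and $(V_{2})$ then gives $y_{0}\notin\partial\Lambda$, i.e. $y_{0}\in\Lambda$. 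This completes the plan.
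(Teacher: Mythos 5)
Your proposal is correct and follows essentially the same route as the paper: boundedness via $(k_3)$ and Lemma \ref{AMlem1critico}, a vanishing argument ruled out by the Sobolev constant together with the strict gap $c_{0}<\frac{s}{N}S_{*}^{\frac{N}{2s}}$, then the Nehari projection $t_{n}v_{n}$, Lemma \ref{FScritico}, and the same translation/Fatou arguments to locate $y_{0}$. The only (harmless) deviation is in the non-vanishing step: you show directly that the contributions of $f$ and of the region $\{|u_{n}|^{2}>a\}$ outside $\Lambda_{\e_{n}}$ vanish, so that $\|u_{n}\|_{\e_{n}}^{2}=\int_{\Lambda_{\e_{n}}}|u_{n}|^{\2}dx+o_{n}(1)$, whereas the paper carries the term $\frac{V_{0}}{k}\int_{\Lambda_{\e_{n}}^{c}\cap\{|u_{n}|^{2}>a\}}|u_{n}|^{2}dx$ through the estimates before reaching the same conclusion $\ell\geq S_{*}^{N/2s}$ and the same contradiction.
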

\begin{proof}
From $\langle J'_{\e_{n}}(u_{n}), u_{n}\rangle=0$ and Lemma \ref{AMlem1critico}, it follows that $(u_{n})$ is bounded in $H^{s}_{\e_{n}}$, so there exists $C>0$ (independent of $n$) such that $\|u_{n}\|_{\e_{n}}\leq C$ for all $n\in \mathbb{N}$.\\
Now we prove that there exist a sequence $(\tilde{y}_{n})\subset \R^{N}$ and constants $R>0$ and $\gamma>0$ such that
\begin{equation}\label{sacchicritico}
\liminf_{n\rightarrow \infty}\int_{B_{R}(\tilde{y}_{n})} |u_{n}|^{2} \, dx\geq \gamma>0.
\end{equation}
Suppose by contradiction that condition \eqref{sacchicritico} does not hold. Then, for all $R>0$ we have
$$
\lim_{n\rightarrow \infty}\sup_{y\in \R^{N}}\int_{B_{R}(y)} |u_{n}|^{2} \, dx=0.
$$
Since we know that $(|u_{n}|)$ is bounded in $H^{s}(\R^{N}, \R)$, we can use Lemma $2.2$ in \cite{FQT} to deduce that $|u_{n}|\rightarrow 0$ in $L^{q}(\R^{N}, \R)$ for any $q\in (2, 2^{*}_{s})$. 
In particular, by $(h_1)$ and $(h_2)$ it follows that
\begin{align*}
\int_{\R^{N}} F(|u_{n}|^{2}) dx=\int_{\R^{N}} f(|u_{n}|^{2})|u_{n}|^{2} dx=o_{n}(1).
\end{align*}
This implies that 
\begin{align}\label{2.11HZ}
\frac{1}{2}\int_{\R^{N}}  G_{\e_{n}}(x, |u_{n}|^{2}) dx\leq \frac{1}{2^{*}_{s}} \int_{\Lambda_{\e_{n}}\cup \{|u_{n}|^{2}\leq a\}} |u_{n}|^{2^{*}_{s}} dx+\frac{V_{0}}{2K} \int_{\Lambda^{c}_{\e_{n}}\cap \{|u_{n}|^{2}> a\}} |u_{n}|^{2} dx+o_{n}(1)
\end{align}
and
\begin{align}\label{2.12HZ}
\int_{\R^{N}}  g_{\e_{n}}(x, |u_{n}|^{2})|u_{\e_{n}}|^{2} dx= \int_{\Lambda_{\e_{n}}\cup \{|u_{n}|^{2}\leq a\}} |u_{n}|^{2^{*}_{s}} dx+\frac{V_{0}}{K} \int_{\Lambda^{c}_{\e_{n}}\cap \{|u_{n}|^{2}> a\}} |u_{n}|^{2} dx+o_{n}(1),
\end{align}
where we used the notation $\Lambda_{\e}^{c}=\R^{N}\setminus \Lambda_{\e}$.\\
Taking into account $\langle J'_{\e_{n}}(u_{n}), u_{n}\rangle=0$ and \eqref{2.12HZ}, we can deduce that 
\begin{align}\label{2.13HZ}
\|u_{n}\|^{2}_{\e_{n}}-\frac{V_{0}}{K} \int_{\Lambda^{c}_{\e_{n}}\cap \{|u_{n}|^{2}> a\}} |u_{n}|^{2} dx=\int_{\Lambda_{\e_{n}}\cup \{|u_{n}|^{2}\leq a\}} |u_{n}|^{2^{*}_{s}} dx+o_{n}(1).
\end{align}
Let $\ell\geq 0$ be such that 
$$
\|u_{n}\|^{2}_{\e_{n}}-\frac{V_{0}}{K} \int_{\Lambda^{c}_{\e_{n}}\cap \{|u_{n}|^{2}> a\}} |u_{n}|^{2} dx\rightarrow \ell.
$$
It is easy to see that $\ell>0$, otherwise $u_{n}\rightarrow 0$ in $H^{s}_{\e_{n}}$ and this is impossible because $\langle J'_{\e_{n}}(u_{n}), u_{n}\rangle=0$, $(k_1)$ and $(k_2)$ imply that there exists $\alpha_{0}>0$ such that $\|u_{n}\|^{2}_{\e_{n}}\geq \alpha_{0}$ for all $n\in \mathbb{N}$.
 From \eqref{2.13HZ} it follows that 
$$
\int_{\Lambda_{\e_{n}}\cup \{|u_{n}|^{2}\leq a\}} |u_{n}|^{2^{*}_{s}} dx\rightarrow \ell.
$$
Using $J_{\e_{n}}(u_{n})-\frac{1}{\2}\langle J'_{\e}(u_{n}), u_{n}\rangle=c_{\e_{n}}$, \eqref{2.11HZ}, \eqref{2.12HZ} and \eqref{2.13HZ} we can see that $\frac{s}{N}\ell\leq c_{\e_{n}}+o_{n}(1)$.
Now, from the definition of $S_{*}$, we obtain that
$$
\|u_{n}\|^{2}_{\e_{n}}-\frac{V_{0}}{K} \int_{\Lambda^{c}_{\e_{n}}\cap \{|u_{n}|^{2}> a\}} |u_{n}|^{2} dx\geq S_{*} \left(\int_{\Lambda_{\e_{n}}\cup \{|u_{n}|^{2}\leq a\}} |u_{n}|^{2^{*}_{s}} dx  \right)^{\frac{2}{2^{*}_{s}}},
$$
and taking the limit as $n\rightarrow \infty$ we can infer that $\ell\geq S^{\frac{N}{2s}}_{*}$. Therefore, we can deduce that 
$\liminf_{n\rightarrow \infty}c_{\e_{n}}\geq \frac{s}{N} S_{*}^{\frac{N}{2s}}$ which contradicts Lemma \ref{AMlem1critico}. 

Now, we set $v_{n}(x)=|u_{n}|(x+\tilde{y}_{n})$. Then, $(v_{n})$ is bounded in $H^{s}(\R^{N}, \R)$, and we may assume that 
$v_{n}\rightharpoonup v\not\equiv 0$ in $H^{s}(\R^{N}, \R)$  as $n\rightarrow \infty$.
Fix $t_{n}>0$ such that $\tilde{v}_{n}=t_{n} v_{n}\in \mathcal{N}_{0}$. Using Lemma \ref{DI}, we can see that 
$$
c_{0}\leq I_{0}(\tilde{v}_{n})\leq \max_{t\geq 0}J_{\e_{n}}(tu_{n})= J_{\e_{n}}(u_{n})
$$
which together with Lemma \ref{AMlem1critico} gives $I_{0}(\tilde{v}_{n})\rightarrow c_{0}$.
In particular, $\tilde{v}_{n}\nrightarrow 0$ in $H^{s}(\R^{N}, \R)$ and  $t_{n}\rightarrow t^{*}$ for some $t^{*}>0$. Then we have $\tilde{v}_{n}\rightharpoonup \tilde{v}=t^{*}v\not\equiv 0$ in $H^{s}(\R^{N}, \R)$, and from Lemma \ref{FScritico} it follows that
\begin{equation}\label{elenacritico}
\tilde{v}_{n}\rightarrow \tilde{v} \mbox{ in } H^{s}(\R^{N}, \R).
\end{equation} 
In particular, $v_{n}\rightarrow v$ in $H^{s}(\R^{N}, \R)$ as $n\rightarrow \infty$.

In order to complete the proof of lemma, we consider $y_{n}=\e_{n}\tilde{y}_{n}$. Our claim is to show that $(y_{n})$ admits a subsequence, still denoted by $y_{n}$, such that $y_{n}\rightarrow y_{0}$, for some $y_{0}\in \Lambda$ such that $V(y_{0})=V_{0}$. Firstly, we prove that $(y_{n})$ is bounded. We argue by contradiction and assume that, up to a subsequence, $|y_{n}|\rightarrow \infty$ as $n\rightarrow \infty$. Take $R>0$ such that $\Lambda \subset B_{R}$. Since we may suppose that  $|y_{n}|>2R$, we have that for any $z\in B_{R/\e_{n}}$ 
$$
|\e_{n}z+y_{n}|\geq |y_{n}|-|\e_{n}z|>R.
$$
Taking into account $(u_{n})\subset \N_{\e_{n}}$, $(V_{1})$, Lemma \ref{DI}, the above inequality,  the definition of $\tilde{f}$, $v_{n}\rightarrow v$ in $H^{s}(\R^{N}, \R)$, and using the change of variable $x\mapsto z+\tilde{y}_{n}$ we get 
\begin{align*}
[v_{n}]^{2}+\int_{\R^{N}} V_{0} v_{n}^{2}\, dx &\leq \int_{\R^{N}} g(\e_{n} x+y_{n}, |v_{n}|^{2}) |v_{n}|^{2} \, dx \nonumber\\
&\leq \int_{B_{\frac{R}{\e_{n}}}} \tilde{f}(|v_{n}|^{2}) |v_{n}|^{2} \, dx+\int_{\R^{N}\setminus B_{\frac{R}{\e_{n}}}} f(|v_{n}|^{2}) |v_{n}|^{2}+|v_{n}|^{\2} \, dx \\
&\leq \frac{V_{0}}{k}\int_{\R^{N}} |v_{n}|^{2}\, dx+o_{n}(1),
\end{align*}
which implies that
$$
\min \left \{ 1, V_{0}\left(1-\frac{1}{k}\right) \right\}  \left([v_{n}]^{2}+\int_{\R^{N}} |v_{n}|^{2}\, dx\right)=o_{n}(1),
$$
that is $v_{n}\rightarrow 0$ in $H^{s}(\R^{N}, \R)$ and this is impossible. Therefore, $(y_{n})$ is bounded and we may assume that $y_{n}\rightarrow y_{0}\in \R^{N}$. It is obvious that the above arguments show that $y_{0}\in \overline{\Lambda}$. Finally we prove that $V(y_{0})=V_{0}$. Otherwise, if $V(y_{0})>V_{0}$, we can use (\ref{elenacritico}), Fatou's Lemma, the invariance of  $\R^{N}$ by translations and Lemma \ref{DI} to deduce that 
\begin{align*}
c_{0}=J_{0}(\tilde{v})&<\frac{1}{2}[\tilde{v}]^{2}+\frac{1}{2}\int_{\R^{N}} V(y_{0})\tilde{v}^{2} \, dx-\frac{1}{2}\int_{\R^{N}} F(|\tilde{v}|^{2})\, dx-\frac{1}{\2}\int_{\R^{N}} |\tilde{v}|^{\2}\, dx\\
&\leq \liminf_{n\rightarrow \infty}\left[\frac{1}{2}[\tilde{v}_{n}]^{2}+\frac{1}{2}\int_{\R^{N}} V(\e_{n}x+y_{n}) |\tilde{v}_{n}|^{2} \, dx-\frac{1}{2}\int_{\R^{N}} F(|\tilde{v}_{n}|^{2})\, dx -\frac{1}{\2}\int_{\R^{N}} |\tilde{v}_{n}|^{\2}\, dx \right] \\
&\leq \liminf_{n\rightarrow \infty}\left[\frac{t_{n}^{2}}{2}[|u_{n}|]^{2}+\frac{t_{n}^{2}}{2}\int_{\R^{N}} V(\e_{n}z) |u_{n}|^{2} \, dz-\frac{1}{2}\int_{\R^{N}} F(|t_{n} u_{n}|^{2})\, dz-\frac{1}{\2}\int_{\R^{N}} |t_{n}u_{n}|^{\2}\, dz  \right] \\
&\leq \liminf_{n\rightarrow \infty} J_{\e_{n}}(t_{n} u_{n}) \leq \liminf_{n\rightarrow \infty} J_{\e_{n}}(u_{n})\leq c_{0}
\end{align*}
which is impossible.
\end{proof}

\begin{proof}[Proof of Theorem \ref{thm2}]
Since the proof of Lemma \ref{moser} also works in the critical case, the only differences between the proofs of Theorem \ref{thm2} and Theorem \ref{thm1} consist in replacing Lemma \ref{prop3.3} and \eqref{4.18HZ} by Lemma \ref{prop3.3critico} and
\begin{align*}
g_{\e}(x, t^{2})t^{2}=f(t^{2})t^{2}+a^{\2}\leq \frac{V_{0}}{k}t^{2}, \mbox{ for all } x\in \R^{N}, |t|\leq \gamma,
\end{align*}
respectively.
\end{proof}

\section{supercritical magnetic problem}
\noindent
In this last section we study the following supercritical fractional problem
\begin{equation}\label{Rsupercritico}
(-\Delta)^{s}_{A_{\e}} u + V_{\e}(x)u =   |u|^{q-2}u+\lambda |u|^{r-2}u \mbox{ in } \R^{N}.
\end{equation}
Motivated by \cite{CY, R}, we truncate the nonlinearity $f(u)=|u|^{q-2}u+\lambda |u|^{r-2}u$ as follows.\\
Let $K>0$ be a real number, whose value will be fixed later, and we set
$$
f_{\lambda}(t):=
\begin{cases}
0 & \text{ if $t\leq 0$} \\
t^{\frac{q-2}{2}}+\lambda t^{\frac{r-2}{2}}& \text{ if $0<t<K$} \\
(1+\lambda K^{\frac{r-q}{2}})t^{\frac{q-2}{2}}   & \text{ if $t \geq K$}.
\end{cases}
$$
Then, it is easy to check that $f_{\lambda}$ satisfies assumptions $(f_1)$-$(f_4)$ ($(f_3)$ holds with $\theta=q>2$).\\
In particular
\begin{equation}\label{fk}
f_{\lambda}(t)\leq (1+\lambda K^{\frac{r-q}{2}})t^{\frac{q-2}{2}} \mbox{ for all } t\geq 0.
\end{equation}
Now we consider the following truncated problem
\begin{equation}\label{TR}
(-\Delta)^{s}_{A_{\e}} u + V_{\e}(x)u =  f_{\lambda}(|u|^{2})u \mbox{ in } \R^{N}, 
\end{equation}
and the corresponding functional $J_{\e, \lambda}: \h\rightarrow \R$ defined as
$$
J_{\e, \lambda}(u)=\frac{1}{2} \|u\|_{\e}^{2}-\frac{1}{2}\int_{\R^{N}} F_{\lambda}(|u|^{2})\, dx.
$$
We also introduce the autonomous functional $I_{0, \lambda}: H^{s}(\R^{N}, \R)\rightarrow \R$ given by
$$
I_{0, \lambda}(u)=\frac{1}{2} \|u\|_{0}^{2}-\frac{1}{2}\int_{\R^{N}} F_{\lambda}(u^{2})\, dx.
$$
Using Theorem \ref{thm1}, we know that for any $\lambda\geq 0$ there exists $\bar{\e}(\lambda)>0$ such that, for any  $\e\in (0, \bar{\e}(\lambda))$, problem (\ref{TR}) admits a nontrivial solution $u_{\e, \lambda}$.\\
Next we prove an auxiliary result which shows that the $H^{s}_{\e}$-norm of $u_{\e, \lambda}$ can be estimated from above by a constant independent of $\lambda$.
\begin{lem}\label{Fig1}
There exists $\bar{C}>0$ such that $\|u_{\e, \lambda}\|_{\e}\leq \bar{C}$ for any $\e>0$ sufficiently small.
\end{lem}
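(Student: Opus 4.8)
The plan is to control $\|u_{\e,\lambda}\|_\e$ via the mountain pass level of the penalized functional attached to the truncated nonlinearity $f_\lambda$, showing that this level is bounded above by a constant independent of $\lambda$ and of $\e$ (for $\e$ small). Recall that $u_{\e,\lambda}$ is obtained from Theorem \ref{thm1} applied with $f=f_\lambda$, which satisfies $(f_1)$--$(f_4)$ with $\theta=q>2$; thus, after the change of variable, $u_{\e,\lambda}$ is the mountain pass critical point of the penalized functional
\begin{equation*}
\tilde J_{\e,\lambda}(u)=\frac12\|u\|_\e^2-\frac12\int_{\R^N}G_{\e,\lambda}(x,|u|^2)\,dx ,
\end{equation*}
where $g_\lambda$ is the del Pino and Felmer modification of $f_\lambda$ and $G_{\e,\lambda}(x,t)=\int_0^t g_\lambda(\e x,\sigma)\,d\sigma$; in particular $\tilde J_{\e,\lambda}(u_{\e,\lambda})=\tilde c_{\e,\lambda}$ (the mountain pass level) and $\tilde J'_{\e,\lambda}(u_{\e,\lambda})=0$.

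First I would obtain a lower bound for $\tilde c_{\e,\lambda}$. Repeating the computation at the start of the proof of Lemma \ref{PSc} with $\theta=q$, and using $(g_3)$, $\langle\tilde J'_{\e,\lambda}(u_{\e,\lambda}),u_{\e,\lambda}\rangle=0$ and $k>\frac{q}{q-2}$,
\begin{equation*}
\tilde c_{\e,\lambda}=\tilde J_{\e,\lambda}(u_{\e,\lambda})-\frac1q\langle\tilde J'_{\e,\lambda}(u_{\e,\lambda}),u_{\e,\lambda}\rangle\geq\frac12\left(\frac{q-2}{q}-\frac1k\right)\|u_{\e,\lambda}\|_\e^2 .
\end{equation*}
So it suffices to bound $\tilde c_{\e,\lambda}$ from above uniformly in $\lambda$ and in $\e$ small.

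For the upper bound I would fix once and for all a function $w\in C^\infty_c(\R^N,\R)$ with $w\geq0$, $w\not\equiv0$, and the cut-off $\eta\in C^\infty_c(\R^N,[0,1])$ of Lemma \ref{AMlem1} ($\eta=1$ near $0$, $\supp(\eta)\subset\Lambda$), and set $w_\e(x)=\eta_\e(x)w(x)e^{\imath A(0)\cdot x}$ with $\eta_\e(x)=\eta(\e x)$, so that $w_\e\in\h$ by Lemma \ref{aux} and $\supp(w_\e)\subset\Lambda_\e$; hence $g_\lambda(\e x,\cdot)=f_\lambda(\cdot)$ on $\supp(w_\e)$. Since $\lambda\geq0$ and $1+\lambda K^{\frac{r-q}{2}}\geq1$, we have $f_\lambda(\tau)\geq\tau^{\frac{q-2}{2}}$ for all $\tau\geq0$, hence $F_\lambda(\tau)\geq\frac2q\tau^{q/2}$, and therefore, for every $t\geq0$,
\begin{equation*}
\tilde J_{\e,\lambda}(tw_\e)=\frac{t^2}{2}\|w_\e\|_\e^2-\frac12\int_{\R^N}F_\lambda(t^2|w_\e|^2)\,dx\leq\frac{t^2}{2}\|w_\e\|_\e^2-\frac{t^q}{q}\int_{\R^N}|w_\e|^q\,dx .
\end{equation*}
Maximizing the right-hand side over $t\geq0$ and using $\tilde c_{\e,\lambda}\leq\max_{t\geq0}\tilde J_{\e,\lambda}(tw_\e)$ gives
\begin{equation*}
\tilde c_{\e,\lambda}\leq\left(\frac12-\frac1q\right)\frac{\left(\|w_\e\|_\e^2\right)^{\frac{q}{q-2}}}{\left(\int_{\R^N}|w_\e|^q\,dx\right)^{\frac{2}{q-2}}}=:M_\e ,
\end{equation*}
which does not depend on $\lambda$.

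To finish, I would pass to the limit $\e\to0$: arguing as in \eqref{limwr} of Lemma \ref{AMlem1} (more easily here, since $w$ has compact support) one has $\|w_\e\|_\e^2\to\|w\|_0^2\in(0,\infty)$, while $\eta_\e\to1$ pointwise together with $|\eta_\e w|^q\leq|w|^q\in L^1(\R^N)$ give $\int_{\R^N}|w_\e|^q\,dx\to\int_{\R^N}|w|^q\,dx>0$ by dominated convergence; hence $M_\e$ converges to a finite positive limit, so $M_\e\leq M$ for all $\e\in(0,\e^*)$ for suitable $\e^*,M>0$. Combined with the lower bound this yields $\|u_{\e,\lambda}\|_\e^2\leq 2M/(\tfrac{q-2}{q}-\tfrac1k)=:\bar C^2$ for all $\e\in(0,\min\{\e^*,\bar{\e}(\lambda)\})$, which is the claim with $\bar C$ independent of $\lambda$. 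I expect the only delicate point to be the uniformity in $\lambda$, which rests on three $\lambda$-free facts: $f_\lambda$ satisfies the Ambrosetti--Rabinowitz condition with exponent $q$ (so the coercivity constant is fixed), $F_\lambda(\tau)\geq\frac2q\tau^{q/2}$ with a $\lambda$-independent constant, and $w_\e$ is chosen before $\lambda$; everything else repeats arguments already carried out in Lemmas \ref{PSc} and \ref{AMlem1}.
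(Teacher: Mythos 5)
Your argument is correct, and it follows the same overall strategy as the paper (a $\lambda$-independent upper bound on the mountain pass energy combined with the Ambrosetti--Rabinowitz coercivity with $\theta=q$), but the upper bound is obtained differently. The paper quotes from the proof of Theorem \ref{thm1} the inequality $J_{\e,\lambda}(u_{\e,\lambda})\leq c_{0,\lambda}+h_{\lambda}(\e)$ with $h_{\lambda}(\e)\to 0$, removes the $\lambda$-dependence via the monotonicity $c_{0,\lambda}\leq c_{0,0}$ of the autonomous levels, and then applies $(f_3)$ to the full truncated functional $J_{\e,\lambda}$, which gives the clean constant $\left(\frac{1}{2}-\frac{1}{\theta}\right)\|u_{\e,\lambda}\|_{\e}^{2}$ with no penalization correction. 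You instead remain at the level of the penalized functional: your lower bound repeats the opening estimate of Lemma \ref{PSc} (hence carries the harmless $\frac{1}{k}$ loss, positive since $k>\frac{q}{q-2}$), and your upper bound replaces the comparison with autonomous levels by an explicit, $\lambda$- and $K$-free test function, using $\supp(w_{\e})\subset\Lambda_{\e}$, the uniform inequality $F_{\lambda}(\tau)\geq\frac{2}{q}\tau^{q/2}$, the fibering maximum of $t\mapsto\frac{t^{2}}{2}A-\frac{t^{q}}{q}B$, and the limits $\|w_{\e}\|_{\e}^{2}\to\|w\|_{0}^{2}$, $\int_{\R^{N}}|w_{\e}|^{q}dx\to\int_{\R^{N}}w^{q}dx$ (the former by the argument of \eqref{limwr}, which indeed simplifies for compactly supported $w$). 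What your route buys is self-containedness: you do not need the (slightly implicit) fact that the penalized energy dominates $J_{\e,\lambda}$ at the solution, nor the comparison $c_{0,\lambda}\leq c_{0,0}$; what it costs is redoing a piece of Lemma \ref{AMlem1} for the fixed $w_{\e}$ and a marginally worse constant $\bar{C}$. The key uniformity points you single out (AR exponent $q$ fixed, $F_{\lambda}\geq\frac{2}{q}\tau^{q/2}$ independent of $\lambda$ and $K$, test function chosen before $\lambda$) are exactly what is needed, so the resulting $\bar{C}$ is independent of $\lambda$ (and of $K$), as in the paper.
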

\begin{proof}
From the proof of Theorem \ref{thm1}, we know that any solution $u_{\e, \lambda}$ of (\ref{TR}) satisfies the following inequality
$$
J_{\e, \lambda}(u_{\e, \lambda})\leq c_{0, \lambda}+h_{\lambda}(\e)
$$
where $c_{0, \lambda}$ is the mountain pass level related to the functional $I_{0, \lambda}$ and $h_{\lambda}(\e)\rightarrow 0$ as $\e\rightarrow 0$.
Then, decreasing $\bar{\e}(\lambda)$ if necessary, we may assume that 
\begin{equation}
J_{\e, \lambda}(u_{\e, \lambda})\leq c_{0, \lambda}+1
\end{equation}
for any $\e\in (0, \bar{\e}(\lambda))$. Since $c_{0, \lambda}\leq c_{0, 0}$
for any $\lambda\geq 0$, we can infer that 
\begin{equation}\label{F1}
J_{\e, \lambda}(u_{\e, \lambda})\leq c_{0, 0}+1
\end{equation}
for any $\e\in (0, \bar{\e}(\lambda))$.
Moreover, using $(f_3)$, we can see that
\begin{align}\label{F2}
J_{\e, \lambda}(u_{\e, \lambda})&=J_{\e, \lambda}(u_{\e, \lambda})-\frac{1}{\theta}\langle J'_{\e, \lambda}(u_{\e, \lambda}), u_{\e, \lambda}\rangle \nonumber\\
&=\left(\frac{1}{2}-\frac{1}{\theta}\right)\|u_{\e, \lambda}\|^{2}_{\e}+\frac{1}{\theta}\int_{\R^{N}}  f_{\lambda}(|u_{\e, \lambda}|^{2})|u_{\e, \lambda}|^{2}-\frac{\theta}{2}F_{\lambda}(|u_{\e, \lambda}|^{2})\, dx \nonumber\\
&\geq  \left(\frac{1}{2}-\frac{1}{\theta}\right)\|u_{\e, \lambda}\|^{2}_{\e}.
\end{align}
Therefore (\ref{F1}) and (\ref{F2}) yield
$$
\|u_{\e, \lambda}\|_{\e}\leq \left[\left(\frac{2\theta}{\theta-2} \right)(c_{0, 0}+1)\right]^{\frac{1}{2}}=:\bar{C} \quad \forall \e\in (0, \bar{\e}(\lambda)).
$$
\end{proof}

\noindent
At this point we aim to prove that $u_{\e, \lambda}$ is a solution of the original problem (\ref{Psupercritico}). In order to achieve our purpose, we will show that we can find $K_{0}>0$ such that for any $K\geq K_{0}$, there exists $\lambda_{0}=\lambda_{0}(K)>0$ such that 
\begin{equation}
\||u_{\e, \lambda}|\|_{L^{\infty}(\R^{N})}\leq K \mbox{ for all }  \lambda\in [0, \lambda_{0}].
\end{equation}
In what follows we use a Moser iteration argument \cite{Moser} (see also \cite{CY, FF, R}).
For simplicity we will write $u$ instead of $u_{\e, \lambda}$.
\begin{proof}[Proof of Theorem \ref{thm3}]
For any $L>0$, we define $u_{L}:=\min\{|u|, L\}\geq 0$, where $\beta>1$ will be chosen later, and let $w_{L}=|u| u_{L}^{\beta-1}$. 
Taking $u_{L}^{2(\beta-1)}u$ in (\ref{TR}) we can see that
\begin{align}\label{conto1F}
&\Re\left(\iint_{\R^{2N}} \frac{(u(x)-u(y)e^{\imath A_{\e}(\frac{x+y}{2})\cdot (x-y)})}{|x-y|^{N+2s}} \overline{(u(x)u_{L}^{2(\beta-1)}(x)-u(y)u_{L}^{2(\beta-1)}(y)e^{\imath A_{\e}(\frac{x+y}{2})\cdot (x-y)})} \, dx dy\right)   \nonumber \\
&=\int_{\R^{N}} f_{\lambda}(|u|^{2}) |u|^{2}u_{L}^{2(\beta-1)}  \,dx-\int_{\R^{N}} V_{\e}(x) |u|^{2} u_{L}^{2(\beta-1)} \, dx.
\end{align}
Putting together (\ref{conto1F}), (\ref{fk}) and $(V_1)$ we get
\begin{align}\label{conto2F}
&\Re\left(\iint_{\R^{2N}} \frac{(u(x)-u(y)e^{\imath A_{\e}(\frac{x+y}{2})\cdot (x-y)})}{|x-y|^{N+2s}} \overline{(u(x)u_{L}^{2(\beta-1)}(x)-u(y)u_{L}^{2(\beta-1)}(y)e^{\imath A_{\e}(\frac{x+y}{2})\cdot (x-y)})} \, dx dy\right) \nonumber\\
&\leq C_{\lambda, K} \int_{\R^{N}} |v|^{q} v_{L}^{2(\beta-1)} \, dx
\end{align}
where $C_{\lambda, K}:=1+\lambda K^{\frac{r-q}{2}}$.
Arguing as in the first part of Lemma \ref{moser} we can see that
\begin{align*}
&\Re\left[(u(x)-u(y)e^{\imath A_{\e}(\frac{x+y}{2})\cdot (x-y)})\overline{(u(x)u_{L}^{2(\beta-1)}(x)-u(y)u_{L}^{2(\beta-1)}(y)e^{\imath A_{\e}(\frac{x+y}{2})\cdot (x-y)})}\right] \\
&\geq (|u(x)|-|u(y)|)(|u(x)|u_{L}^{2(\beta-1)}(x)-|u(y)|u_{L}^{2(\beta-1)}(y))
\end{align*}
which gives
\begin{align}\label{reale}
&\Re\left(\iint_{\R^{2N}} \frac{(u(x)-u(y)e^{\imath A_{\e}(\frac{x+y}{2})\cdot (x-y)})}{|x-y|^{N+2s}} \overline{(u(x)u_{L}^{2(\beta-1)}(x)-u(y)u_{L}^{2(\beta-1)}(y)e^{\imath A_{\e}(\frac{x+y}{2})\cdot (x-y)})} \, dx dy\right) \nonumber\\
&\geq \iint_{\R^{2N}} \frac{(|u(x)|-|u(y)|)}{|x-y|^{N+2s}} (|u(x)|u_{L}^{2(\beta-1)}(x)-|u(y)|u_{L}^{2(\beta-1)}(y))\, dx dy.
\end{align}
From formulas \eqref{Gg1} and \eqref{SS1} we get the following estimate
\begin{align}\label{conto3F}\begin{split}
\|w_{L}\|_{L^{2^{*}_{s}}(\R^{N})}^{2}&\leq C_{0} \beta^{2} \iint_{\R^{2N}} \frac{(|u(x)|-|u(y)|)}{|x-y|^{N+2s}} (|u(x)|u_{L}^{2(\beta-1)}(x)-|u(y)|u_{L}^{2(\beta-1)}(y))\, dx dy.
\end{split}\end{align}
Taking into account (\ref{conto2F}), \eqref{reale} and (\ref{conto3F}), and using the H\"older inequality we deduce that
\begin{align}\label{conto4F}
\|w_{L}\|_{L^{2^{*}_{s}}(\R^{N})}^{2}\leq C_{1} \beta^{2} C_{\lambda, K} \left(\int_{\R^{N}} |u|^{2^{*}_{s}}\, dx\right)^{\frac{q-2}{2^{*}_{s}}} \left(\int_{\R^{N}} w_{L}^{\frac{2 2^{*}_{s}}{2^{*}_{s}-(q-2)}} \, dx\right)^{\frac{2^{*}_{s}-(q-2)}{2^{*}_{s}}}
\end{align}
where  $2<\frac{2 2^{*}_{s}}{2^{*}_{s}-(q-2)}<2^{*}_{s}$ and $C_{1}>0$.
Then, using $\mathcal{D}^{s,2}(\R^{N}, \R)\subset L^{2^{*}_{s}}(\R^{N}, \R)$, Lemma \ref{DI} and Lemma \ref{Fig1}, we obtain
\begin{align}\label{conto4F}
\|w_{L}\|_{L^{2^{*}_{s}}(\R^{N})}^{2}\leq C_{2} \beta^{2} C_{\lambda, K} \bar{C}^{\frac{q-2}{2^{*}_{s}}} \|w_{L}\|_{L^{\alpha^{*}_{s}}(\R^{N})}^{2} 
\end{align}
where 
$$
\alpha^{*}_{s}:=\frac{2 2^{*}_{s}}{2^{*}_{s}-(q-2)}.
$$ 
Let us note that, if $|u|^{\beta}\in L^{\alpha^{*}_{s}}(\R^{N}, \R)$, the definition of $w_{L}$, $u_{L}\leq |u|$ and  (\ref{conto4F}) imply that
\begin{align}\label{conto5F}
\|w_{L}\|_{L^{2^{*}_{s}}(\R^{N})}^{2}\leq C_{3} \beta^{2} C_{\lambda, K} \bar{C}^{\frac{q-2}{2^{*}_{s}}} \left(\int_{\R^{N}} |u|^{\beta \alpha^{*}_{s}}\, dx\right)^{\frac{2}{\alpha^{*}_{s}}}<\infty.
\end{align}
Taking the limit  as $L \rightarrow \infty$ in (\ref{conto5F}) and using the Fatou Lemma we have
\begin{align}\label{conto6F}
\||u|\|_{L^{\beta 2^{*}_{s}}(\R^{N})}\leq (C_{4} C_{\lambda, K})^{\frac{1}{2\beta}} \beta^{\frac{1}{\beta}} \||u|\|_{L^{\beta \alpha^{*}_{s}}(\R^{N})}
\end{align}
provided that $|u|^{\beta \alpha^{*}_{s}}\in L^{1}(\R^{N}, \R)$.\\
Set $\beta:=\frac{2^{*}_{s}}{\alpha^{*}_{s}}>1$ and we note that, since $|u|\in L^{2^{*}_{s}}(\R^{N}, \R)$, the above inequality holds for this choice of $\beta$. Then, observing that $\beta^{2}\alpha^{*}_{s}=\beta 2^{*}_{s}$, it follows that \eqref{conto6F} holds with $\beta$ replaced by $\beta^{2}$, so we have
\begin{align*}
\||u|\|_{L^{\beta^{2} 2^{*}_{s}}(\R^{N})}\leq (C_{4} C_{\lambda, K})^{\frac{1}{2\beta^2}} \beta^{\frac{2}{\beta^{2}}}  \||u|\|_{L^{\beta^{2} \alpha^{*}_{s}}(\R^{N})}\leq  (C_{4} C_{\lambda, K})^{\frac{1}{2}\left(\frac{1}{\beta}+\frac{1}{\beta^{2}}\right)} \beta^{\frac{1}{\beta}+\frac{2}{\beta^{2}}} \||u|\|_{L^{\beta \alpha^{*}_{s}}(\R^{N})}.
\end{align*}
Iterating this process and using the fact that $\beta \alpha^{*}_{s}:=2^{*}_{s}$ we deduce that for every $m\in \mathbb{N}$
\begin{align}\label{conto7F}
 \||u|\|_{L^{\beta^{m} 2^{*}_{s}}(\R^{N})} \leq (C_{4} C_{\lambda, K})^{\sum_{j=1}^{m}\frac{1}{2\beta^{j}}} \beta^{\sum_{j=1}^{m} j\beta^{-j}} \||u|\|_{L^{2^{*}_{s}}(\R^{N})}.
\end{align}
Taking the limit as $m \rightarrow \infty$ in (\ref{conto7F}) and using the embedding $\mathcal{D}^{s,2}(\R^{N}, \R)\subset L^{2^{*}_{s}}(\R^{N}, \R)$, Lemma \ref{DI} and Lemma \ref{Fig1} we obtain
\begin{align}\label{conto9F}
\||u|\|_{L^{\infty}(\R^{N})}\leq (C_{4} C_{\lambda, K})^{\gamma_{1}} \beta^{\gamma_{2}} C_{5}
\end{align}
where $C_{5}= S_{*}^{-\frac{1}{2}} \bar{C}$, and
$$
\gamma_{1}:=\frac{1}{2}\sum_{j=1}^{\infty}\frac{1}{\beta^{j}}<\infty \quad \mbox{ and } \quad \gamma_{2}:=\sum_{j=1}^{\infty}\frac{j}{\beta^{j}}<\infty.
$$
Next, we will find suitable values of $K$ and $\lambda$ such that the following inequality holds
$$
(C_{4} C_{\lambda, K})^{\gamma_{1}} \beta^{\gamma_{2}} C_{5}\leq K,
$$
or equivalently
$$
1+\lambda K^{\frac{r-q}{2}}\leq C_{4}^{-1} \beta^{-\frac{\gamma_{2}}{\gamma_{1}}} (K C_{5}^{-1})^{\frac{1}{\gamma_{1}}}.
$$
Take $K>0$ such that 
$$
\frac{(K C_{5}^{-1})^{\frac{1}{\gamma_{1}}}}{C_{4}\beta^{\frac{\gamma_{2}}{\gamma_{1}}}}-1>0
$$
and fix $\lambda_{0}>0$ such that 
$$
\lambda\leq \lambda_{0}\leq \left[\frac{(K C_{5}^{-1})^{\frac{1}{\gamma_{1}}}}{C_{4}\beta^{\frac{\gamma_{2}}{\gamma_{1}}}}-1\right] \frac{1}{K^{\frac{r-q}{2}}}.
$$
Then, using (\ref{conto9F}) we can conclude that 
$$
\||u|\|_{L^{\infty}(\R^{N})}\leq K \mbox{ for all }  \lambda\in [0, \lambda_{0}].
$$
\end{proof}

\section*{Acknowledgements}
The author would like to thank the anonymous referee for her/his careful reading of the manuscript and valuable suggestions that improved the presentation of the paper.

\end{document}